\documentclass{article}

%%%%
\usepackage[longnamesfirst,numbers,sort&compress]{natbib}
\makeatletter
\def\NAT@spacechar{~}
\makeatother
%%%%
\usepackage[bmargin=25mm,tmargin=25mm,lmargin=30mm,rmargin=30mm]{geometry}

\usepackage{amsmath,amssymb,amsthm, mathrsfs, mathtools,bbm,bm}
\usepackage{enumitem}
\usepackage{colortbl}
\usepackage[hypertexnames=false]{hyperref}
\hypersetup{
    colorlinks,
    linkcolor={red!60!black},
    citecolor={green!60!black},
    urlcolor={blue!40!black}
}

\usepackage{subfig}
\usepackage{algorithm} %for appendix on code
\usepackage{algpseudocode}

%%%%%%%%%%%%%%%%%%%%%%%%%%%%%
% Theorems and others    %%%%
%%%%%%%%%%%%%%%%%%%%%%%%%%%%%
\setlist[enumerate,1]{label={(\roman*)},noitemsep, topsep=0pt}
\setlist[enumerate,2]{label={(\alph*)},noitemsep, topsep=0pt}
\setlist[enumerate,3]{label={(\arabic*)},noitemsep, topsep=0pt}
\setlist[itemize]{nolistsep,noitemsep, topsep=0pt}

%number eq
\numberwithin{equation}{section}
\numberwithin{figure}{section}

\usepackage[noabbrev,capitalise]{cleveref}
\crefname{thm}{Theorem}{Theorems}
\crefname{enumi}{}{}
\crefname{equation}{}{}
\crefname{subsection}{Subsection}{Subsections}

\theoremstyle{plain}
\newtheorem{thm}{Theorem}[section]
\newtheorem{theorem}[thm]{Theorem}
\newtheorem*{theorem*}{Theorem}
\newtheorem{proposition}[thm]{Proposition}
\newtheorem{observation}[thm]{Observation}
\newtheorem{claim}[thm]{Claim}
\newtheorem{corollary}[thm]{Corollary}
\newtheorem{conjecture}[thm]{Conjecture}

\theoremstyle{definition}
\newtheorem{definition}[thm]{Definition}
\newtheorem{problem}[thm]{Problem}
\newtheorem*{problem*}{Problem}
\newtheorem{question}[thm]{Question}

\newcommand{\eps}{\varepsilon}
%%%%%%%%%%%%%%%%%%%%%%%%%%%%%%
% Commands and notations %%%%%
%%%%%%%%%%%%%%%%%%%%%%%%%%%%%%

%Paired delimiter, use \name*{} for automatic sizing
\DeclarePairedDelimiter{\parens}{(}{)}
\DeclarePairedDelimiter{\set}{\{}{\}}
\DeclarePairedDelimiter{\brackets}{[}{]}
\DeclarePairedDelimiter{\floor}{\lfloor}{\rfloor}
\DeclarePairedDelimiter{\ceil}{\lceil}{\rceil}
\DeclarePairedDelimiter\abs{\lvert}{\rvert}  
\DeclarePairedDelimiter\size{\lvert}{\rvert} 

\renewcommand{\Pr}{\mathbb{P}}

%shortcuts
\newcommand{\RBP}[1][{}]{\ensuremath{R_{#1}\cup B_{#1}\cup P_{#1}}}
\newcommand{\RBPprime}[1][{}]{\ensuremath{R'_{#1}\cup B'_{#1}\cup P'_{#1}}}
\newcommand{\se}{\ensuremath{\subseteq}}
\newcommand{\sm}{\ensuremath{\setminus}}
\newcommand{\RT}[3]{\ensuremath{\mathrm{{\bf RT}}_{#1}(#3,#2)}}

\allowdisplaybreaks

\usepackage{tikz}
\usepackage{calc}
\usetikzlibrary{calc,angles,quotes,decorations.pathreplacing,calligraphy,backgrounds,arrows,patterns}
\usetikzlibrary{shapes.symbols,shapes.geometric,decorations.markings}

\tikzstyle{vertex}=[circle,draw=black, fill=black, inner sep=0pt, minimum size=6pt]
\tikzstyle{smallvertex}=[circle, draw=black, fill=black, inner sep=0pt, minimum size=4pt]
\tikzstyle{tinyvertex}=[circle, draw=black, fill=gray, inner sep=0pt, minimum size=2pt] 

\newcommand{\vertex}{\node[vertex]}

\newcommand{\tinyvertex}{\node[tinyvertex]} 

\newcounter{Angle}
\newcounter{Neighbour}

 %could pass it in option if necessary
\newcommand{\widthedgethick}{1.5}

%---------------------------
% Purple
%---------------------------

\definecolor{purple}{rgb}{0.87, 0.0, 1.0}

\newcommand{\ExamplePurpleBlowUp}[1]{
\begin{tikzpicture}[x=#1 cm, y=#1 cm]

\begin{scope}[shift={(3,0)}] %red edges
\draw (0,0) node{$R$};
\foreach \i in {1, 2, ..., 4} {
    \setcounter{Angle}{180-\i * 360 / 4 + 45}
    \vertex(a\i) at (\theAngle:1) [label=:]{};
    \vertex(b\i) at (\theAngle:1.3) [label=:]{};
}
\vertex(c1) at (135:1.6) [label=:]{};
\vertex(c2) at (45:1.6) [label=:]{};

\path
(a1) edge[color=red,line width = \widthedgethick] (a2)
(a2) edge[color=red,line width = \widthedgethick] (a3)
(a3) edge[color=red,line width = \widthedgethick] (a4)
(a4) edge[color=red,line width = \widthedgethick] (a1);

\path
(b1) edge[color=red,line width = \widthedgethick] (b2)
(b2) edge[color=red,line width = \widthedgethick] (b3)
(b3) edge[color=red,line width = \widthedgethick] (b4)
(b4) edge[color=red,line width = \widthedgethick] (b1);

\path
(c1) edge[color=red,line width = \widthedgethick] (c2);
\end{scope}

\begin{scope}[shift={(6,0)}] %purple edges
\draw (0,0) node{$P$};
\foreach \i in {1, 2, ..., 4} {
    \setcounter{Angle}{180-\i * 360 / 4 + 45}
    \vertex(a\i) at (\theAngle:1) [label=:]{};
    \vertex(b\i) at (\theAngle:1.3) [label=:]{};
}
\vertex(c1) at (135:1.6) [label=:]{};
\vertex(c2) at (45:1.6) [label=:]{};

\path
(a1) edge[color=purple,line width = \widthedgethick] (b2)
(a1) edge[color=purple,line width = \widthedgethick] (c2)
(b1) edge[color=purple,line width = \widthedgethick] (a2)
(b1) edge[color=purple,line width = \widthedgethick] (c2)
(c1) edge[color=purple,line width = \widthedgethick] (a2)
(c1) edge[color=purple,line width = \widthedgethick] (b2)

(a2) edge[color=purple,line width = \widthedgethick] (b3)
(b2) edge[color=purple,line width = \widthedgethick] (a3)
(c2) edge[color=purple,line width = \widthedgethick] (b3)
(c2) edge[color=purple,line width = \widthedgethick] (a3)

(a3) edge[color=purple,line width = \widthedgethick] (b4)
(b3) edge[color=purple,line width = \widthedgethick] (a4)

(a4) edge[color=purple,line width = \widthedgethick] (b1)
(a4) edge[color=purple,line width = \widthedgethick] (c1)
(b4) edge[color=purple,line width = \widthedgethick] (a1)
(b4) edge[color=purple,line width = \widthedgethick] (c1);

\end{scope}

\end{tikzpicture}
}

%##########################################################################
% Andrasfai ##########################################
%##########################################################################

\newcommand{\AndrasfaiGraph}[2]{
\def\k{#2}
\pgfmathsetmacro\n{ 3*\k-1}
\pgfmathsetmacro\endk{ 2*\k-1}

\begin{tikzpicture}[x=#1 cm, y=#1 cm]
        \foreach \i in {1, 2,..., \n} {
            \setcounter{Angle}{180-\i * 360 / (3*\k-1)}
            \tinyvertex (v\i) at (\theAngle:1) []{};}

        \foreach \u in {1, 2,..., \n} {
            \foreach \j in {\k, ..., \endk} {
                %\pgfmathsetmacro\v{ \u+\j}
                %\pgfmathparse{\v< \n || \v == \n ? 1 : 0}
                
                \setcounter{Neighbour}{ \u+\j}
                \pgfmathparse{\theNeighbour< \n || \theNeighbour == \n ? 1 : 0}
                \ifnum\pgfmathresult>0
                    \path (v\u) edge[color=\col] (v\theNeighbour); 
                \fi
                
            }
        }
\end{tikzpicture}

}

\begin{document}

\author{
    Thomas Lesgourgues \thanks{Department of Combinatorics and Optimization, University of Waterloo, Canada. Email:  \texttt{tlesgourgues@uwaterloo.ca}}
\and Anita Liebenau \thanks{School of Mathematics and Statistics, UNSW Sydney, NSW 2052, Australia. 
  Email: \texttt{a.liebenau@unsw.edu.au} 
  This work was completed while the author was in residence at the Simons Laufer Mathematical Sciences Institute in
Berkeley, California, during the Spring 2025 semester, supported by the National Science Foundation under Grant No.\, DMS-1928930.}
\and Nye Taylor \thanks{School of Mathematics and Statistics, UNSW Sydney, NSW 2052, Australia. Email: \texttt{nye.taylor@student.unsw.edu.au}}
}

\sloppy

\title{Ramsey with purple edges}
\date{May 1, 2025}

\maketitle

\begin{abstract}
Motivated by a question of Angell, we investigate a variant of Ramsey numbers where some edges are coloured simultaneously red and blue, which we call purple. Specifically, we are interested in the largest number~$g=g(n;s,t)$, for some~$s$ and~$t$ and~$n<R(s,t)$, such that there exists a red/blue/purple colouring of~$K_n$ with~$g$ purple edges, with no red/purple copy of $K_s$ nor blue/purple copy of $K_t$. We determine~$g$ asymptotically for a large family of parameters, exhibiting strong dependencies with Ramsey--Tur\'{a}n numbers.
\end{abstract}

\section{Introduction}

Ramsey theory is one of the most active areas in combinatorics, with connections to extremal graph theory, number theory, discrete geometry, and theoretical computer science. The starting point of the area is Ramsey's Theorem~\cite{RamseyFormalLogic} which implies that for given natural numbers $s,t$ there exists an integer~$n$ such that every red/blue colouring of the edges of the complete graph on~$n$ vertices, denoted by~$K_n$, contains a red copy of~$K_s$ or a blue copy of~$K_t$. One of the most prominent open problems in the area is determining the smallest such~$n$, called the Ramsey number $R(s,t)$.

Erd\H{o}s and Szekeres~\cite{erdosszekeres1935} popularised Ramsey theory and showed that $R(t,t)\leq 4^{t}/\sqrt{t}$. A lower bound on \( R(t, t) \) of the form \( 2^{t/2 + o(1)} \), first established by Erd\H{o}s~\cite{erdos1947} as a prime example of the power of the probabilistic method, remains the best known to date -- having been improved only by a constant factor of two by Spencer~\cite{spencer1975ramsey}. 
The upper bound of the form $4^{t+o(1)}$ has been improved by various authors~\cite{Rodl1986Unpublished,Thomason1988,conlon2009new,sah2020diagonal}  culminating in the breakthrough result by Campos, Griffiths, Morris, and Sahasrabudhe~\cite{campos2023exponential}, who gave the first exponential improvement on $R(t,t)$. Their result has recently been improved by Gupta, Ndiaye, Norin, and Wei~\cite{gupta2024optimizing} who showed that for all sufficiently large $t$, we have $R(t,t)\leq 3.8^t$. 

More is known in the off-diagonal case, when~$s$ is a small constant and $t$ is large with respect to~$s$. Ajtai, Koml\'{o}s and Szemer\'edi~\cite{aks1980} proved that $R(s,t)=O_s(t^{s-1}/(\log t)^{s-2})$ for all $s,t$. For $s=3$, Shearer improved this upper bound to $R(3,t)\le (1+o(1))t^2/\log t$, which is now known to be tight up to a factor $(1/4+o(1))$ as established by Bohman and Keevash~\cite{bohman2013dynamic}, and independently by Fiz Pontiveros, Griffiths, and Morris~\cite{fpgm2013}, improving on earlier work by Kim~\cite{kim1995}. For $s=4$, Mattheus and Verstraete~\cite{mattheus2024asymptotics} showed recently that the upper bound by Ajtai, Koml\'{o}s and Szemer\'edi is tight up to a factor of $(\log t)^2$. 

In this article, we consider a natural variant inspired by a problem posed by Angell in {\em Parabola}~\cite{Parabola}, a mathematical magazine for high school students.

\begin{problem}[Angell~\cite{Parabola}]\label{DavidProblem}
    Consider $K_8$, the complete graph on the vertex set $\set*{1,\ldots,8}$, such that the edges $12$, $34$, $56$ and $78$ are joined by a double edge, one coloured red and one coloured blue. 
    Show that every colouring of the remaining (simple) edges using colours red and blue produces a red copy of $K_3$ or a blue copy of $K_4$.
\end{problem}

We invite the interested reader to find a solution before proceeding. Since $R(3,4)=9$, the addition of some double-coloured edges is necessary in this problem. We initially conjectured that a more general statement holds, namely that for any $s,t\ge 3$, every red/blue colouring of the edges of the complete graph on $R(s,t)-1$ vertices, with the addition of a perfect matching of double-coloured edges, contains a red copy of $K_s$ or a blue copy of $K_t$. 
This turns out to be false.
McKay and Radziszowski~\cite{McKayR45} employed computational methods to prove that $R(4,5)$ is equal to $25$, a fact that was recently also formally verified by Gauthier and Brown~\cite{gb2024-formalR45}. However, there exists a red/blue colouring of the edges of $K_{24}$ with a perfect matching of double-coloured edges, that contains neither a red copy of $K_4$ nor a blue copy of $K_5$. We outline the computational methods used to obtain this result and other small cases in~\cref{sec:comp}.

Beyond the consideration of small cases and of matchings, we find the presence of edges that are both red and blue to be of broader interest. We prefer to view double-coloured edges as edges of a third colour, which we shall henceforth call {\em purple}. A {\em red/blue/purple colouring} of $K_n$ is a partition of the edges of $K_n$ into three graphs {\RBP}. 
Problem~\ref{DavidProblem}, for example, may be rephrased as showing that any red/blue/purple colouring \RBP{} of $K_8$, where $P$ forms a perfect matching, 
contains a red/purple triangle or a blue/purple copy of $K_4$, meaning  $K_3\subseteq R\cup P$ or $K_4\subseteq B\cup P$. 

One possibility to approach this variant is by considering $f(n,m)$, which is the size of the largest red/purple or blue/purple clique guaranteed to exist in every red/blue/purple colouring of $K_n$ with exactly $m$ purple edges.
The Erd\H{o}s--Szekeres upper bound on $R(t,t)$ implies that $f(n,m)$ is at least $(\log n)/2$ for every $m$, and a random colouring establishes that $f(n,m)=O(\log n)$ when $m\le cn^2/2$ for every constant $0<c<1$. 
Nagy~\cite{nagy2016density} improved the lower bound by adapting an elegant argument of Erd\H{o}s and Szemer\'{e}di~\cite{ErSz1972ramsey} on a different variant on Ramsey numbers. 

In this article, we explore the problem from the following 
perspective.
We say that a graph $G$ is {\em $H$-free} if~$G$ does not contain a copy of $H$ as a subgraph. 
Given integers $s,t$, we say that a red/blue/purple colouring \RBP{} is {\em $(s,t)$-free} if $P\cup R$ is~$K_s$-free and $P\cup B$ is~$K_t$-free. 

\begin{definition}
    \label{def:PurpleRamsey}
    Given integers $s,t\geq 2$ and $n<R(s,t)$, let  $g=g(n;s,t)$ denote the largest integer such that there exists an $(s,t)$-free red/blue/purple colouring \RBP{} of $K_n$ that satisfies $|P| = g$. 
\end{definition}
Clearly, for $n\geq R(s,t)$ no such colouring exists, whereas for $n < R(s,t)$ at least one colouring, possibly with zero purple edges, does exist. So, $g$ is well-defined. For simplicity, we extend~\cref{def:PurpleRamsey} to non-integers $s$ and $t$, as the largest integer such that there exists a red/blue/purple colouring \RBP{} of $K_n$ that satisfies $\omega(R\cup P)<s$, $\omega(B\cup P)<t$ and $|P| = g$, where $\omega(G)$ denotes the clique number of a graph~$G$ as usual. That is for non-integers~$s,t$ we let $g(n;s,t):=g(n;\ceil{s},\ceil{t})$.

It turns out that $g(n;s,t)$ is closely related to the so-called Ramsey-Tur\'an numbers. For integers~$n$,~$s$ and~$t$, the Ramsey-Tur\'an number \RT{s}{t}{n} is the maximum number of edges in an~$n$-vertex $K_s$-free graph $G$ with independence number $\alpha(G)< t$.  First considered by Andr{\'a}sfai~\cite{andrasfai1962extremalproblem} in the triangle case, Ramsey-Tur\'an numbers were systematically introduced by S\'os, who proved many deep results throughout her career (e.g.~\cite{EMST1972someI,EMST1972someII,EMST1972someIII,EHSSS1993turan,EHSSS1994turan}). It is not hard to see that Ramsey-Tur\'an numbers yield natural upper bounds on~$g(n;s,t)$. Indeed, let \RBP{} be a red/blue/purple colouring of $K_n$ such that $R\cup P$ is $K_s$-free, $B\cup P$ is $K_t$-free, and $|P|$ is maximal. Then $R\cup P$ has independence number $\alpha(R\cup P)=\omega(B)\leq \omega(B\cup P)< t$. It follows that
\begin{equation}\label{eq:upper_bound_RT}
    g(n;s,t)=|P|\leq |P\cup R|\leq \RT{s}{t}{n}.
\end{equation}

We refer to the excellent survey by Simonovits and S\'os~\cite{simonovits2001survey} on Ramsey-Tur\'an numbers, and summarise here only the results that are most important for us. We first note that 
$\RT{s}{cn}{n}$ is known precisely for every $c\geq 1/(s-1)$ due to Tur\'{a}n's theorem~\cite{turan1941extremal}, see~\cref{sec:RT_Theory}. Many classical theorems in the area yield bounds on $\RT{s}{cn}{n}$ when $c$ is a sufficiently small constant. 
Erd\H{o}s and S\'os~\cite{erdos1970someremarks} studied this specific problem for odd cliques $K_s$, and showed that $\RT{s}{cn}{n} = ((s-3)/(s-1)+O(c))n^2/2$. 
Brandt~\cite{brandt2010triangle} gave asymptotically tight bounds on $\RT{3}{cn}{n}$ when~$c\in (0,1/3)$, and L{\"u}ders and Reiher~\cite{luders2019ramsey} extended this result to all odd integers~$s$ and small enough~$c$.
The even clique cases proved to be much more difficult. 
The first result in that direction was obtained by Szemer{\'e}di~\cite{szemeredi1972graphs} who proved $\RT{4}{cn}{n}\leq (1/8+ O(c))n^2$ for small enough~$c$, using a version of his seminal Regularity Lemma. An ingenious construction by Bollob{\'a}s and Erd\H{o}s~\cite{bollobas1976ramsey} shows that this bound is best possible up to the dependency in~$c$. 
Erd\H{o}s, Hajnal, S\'os, and Szemer{\'e}di~\cite{erdos1983more} used the Bollob{\'a}s-Erd\H{o}s construction and showed that $\RT{s}{cn}{n}= ((3s-10)/(3s-4)+O(c))n^2/2$ for all even integers~$s$. 
Recently, Fox, Loh, and Zhao~\cite{fox2015critical} 
refined the dependency on $c$ when $s=4$, and L{\"u}ders and Reiher~\cite{luders2019ramsey} proved that $\RT{s}{cn}{n}= ((3s-10)/(3s-4)+c-c^2+o(1))n^2/2$ for all even integers~$s$.

Our first result shows that the trivial upper bound~\cref{eq:upper_bound_RT} is asymptotically correct in that range, that is when $t=cn$ for sufficiently small $c$. 
\begin{thm}\label{thm:GeneralSmallLinear}
For any integer $s\geq 3$, and for any $c>0$ small enough, we have
    \[g(n;s,cn) = (1-o(1))\; \RT{s}{cn}{n}.\]
\end{thm}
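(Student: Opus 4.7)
The upper bound $g(n;s,cn)\le \RT{s}{cn}{n}$ is~\eqref{eq:upper_bound_RT}, so the task is to construct, for every $\varepsilon>0$ and $n$ large enough, an $(s,cn)$-free red/blue/purple colouring of $K_n$ with $|P|\ge (1-\varepsilon)\RT{s}{cn}{n}$ purple edges.

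The template for the construction is the following: let $H$ be a near-extremal Ramsey--Tur\'{a}n graph, that is, $H$ is $K_s$-free on vertex set $[n]$ with $\alpha(H)<cn$ and $|E(H)|\ge (1-\varepsilon/2)\RT{s}{cn}{n}$. Suppose we can split $E(H)=P\sqcup R$ for some $R\subseteq E(H)$ satisfying $\alpha(R)<cn$ and $|E(R)|=o(n^2)$. Colour $P$ purple, $R$ red, and the remaining edges $B:=E(K_n)\setminus E(H)$ blue. Then the colouring \RBP{} is $(s,cn)$-free: $P\cup R=E(H)$ is $K_s$-free by assumption, while $P\cup B=E(K_n)\setminus R$ is $K_{cn}$-free because $\alpha(R)<cn$ forces every $cn$-subset of vertices to contain an $R$-edge. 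Since $\RT{s}{cn}{n}=\Theta_{c,s}(n^2)$ in the regime of small $c$, this yields $|P|=|E(H)|-|E(R)|\ge (1-\varepsilon)\RT{s}{cn}{n}$ for $n$ large, as required.

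The whole problem therefore reduces to finding, inside a near-extremal Ramsey--Tur\'{a}n graph $H$, a sparse covering subgraph $R$. I plan to exploit the explicit structural form of extremal Ramsey--Tur\'{a}n graphs in this regime: for odd $s$ they are Tur\'{a}n-type constructions sparsified by the techniques of Erd\H{o}s--S\'{o}s and L\"{u}ders--Reiher, and for even $s$ they are blow-ups of Bollob\'{a}s--Erd\H{o}s type graphs (Szemer\'{e}di, Erd\H{o}s--Hajnal--S\'{o}s--Szemer\'{e}di). In each case, $H$ admits a vertex partition $V(H)=V_1\cup\dots\cup V_k$ with $k=k(c,s)$ constant, and edges of $H$ are distributed across pairs $(V_i,V_j)$ in a controlled way. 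Within this framework, I would build $R$ by choosing, for each relevant pair $(V_i,V_j)$, a sparse bipartite subgraph --- for instance, a random bipartite graph of edge density $\Theta(\log n/n)$ --- so that $|E(R)|=O(n\log n)=o(n^2)$ while, by a Chernoff and union-bound argument, $R$ intersects every sufficiently ``balanced'' $cn$-subset.

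The main obstacle is that a $cn$-subset $S\subseteq V$ can be heavily concentrated in a few classes, giving $H[S]$ only $O(n)$ edges, too few for a naive probabilistic $R$ to cover $S$ with positive probability. To overcome this, I plan to choose $H$ with independence-number slack --- namely, near-extremal for $\RT{s}{(c-\delta)n}{n}$ for some small $\delta=\delta(\varepsilon)>0$ --- so that every $cn$-subset $S$ necessarily contains at least $\delta n$ vertices forcing $S$ to span several adjacent class-pairs simultaneously. This structural bound on the distribution of $S$ across classes provides enough candidate edges of $H[S]$ to support the sparse random cover. Taking $\delta\to 0$ (together with $n\to\infty$) closes the asymptotic gap and yields the matching lower bound $g(n;s,cn)\ge (1-o(1))\RT{s}{cn}{n}$.
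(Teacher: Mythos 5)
Your high-level template — reduce to constructing a sparse red graph $R\subseteq E(H)$, for $H$ a near-extremal Ramsey--Tur\'an graph, with $\alpha(R)<cn$ — is exactly the right framing and agrees with the paper's plan (and the upper bound via \eqref{eq:upper_bound_RT} is the same). However, the crucial step is under-justified and, as written, has a genuine gap.

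The gap is the claim that taking $H$ with independence-number slack, say $\alpha(H)<(c-\delta)n$, forces every $cn$-subset $S$ to induce enough edges of $H$ for a random sparse $R$ to hit. What the slack gives you, via Tur\'an applied to the complement of $H[S]$, is only $e(H[S])\geq \frac{(cn)^2}{2(c-\delta)n}-\frac{cn}{2}=\Theta_{c,\delta}(n)$ edges — which is far too few: with $p=\Theta(\log n/n)$ the union bound over $\binom{n}{cn}\ge 2^{\Omega(n)}$ sets needs $p\cdot e(H[S])=\Omega(n)$, i.e.\ $e(H[S])=\Omega(n^2/\log n)$, not $\Theta(n)$. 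To rescue the argument you must use the blow-up structure explicitly (via the parts): if $H$ is a blow-up of a graph $G$ on $k$ vertices with $\alpha(G)=m$ and parts of size $n/k$, then ordering $a_i=|S\cap V_i|$ decreasingly, the top $m+1$ parts cannot all be independent in $G$, and a short calculation gives $e(H[S])\geq a_{m+1}^2\geq (\delta n/(k-m))^2=\Theta_{\delta,k}(n^2)$. This is the missing lemma; "independence-number slack" alone does not deliver it. Moreover, for even $s$ the near-extremal $H$ is \emph{not} a blow-up of a bounded graph — the paper's Theorem~\ref{thm:ExistenceGraphFox} graph has $k\approx \log\log n$ vertices (its $\varepsilon$-term vanishes only as $k\to\infty$), so your "$k=k(c,s)$ constant" assumption fails there, and the $(n/k)^2$ factor degrades. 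One then needs to check that $p\geq k^2/(\delta^2 n)$ still keeps $|R|=o(n^2)$, which it does for $k=O(\log\log n)$, but none of this is in your sketch.

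The paper takes a genuinely different, more constructive route that sidesteps this analysis. For odd $s=2\ell+1$ it builds $R$ as a strong product $R\boxtimes K_\ell$ of the red part of a blow-up colouring with $K_\ell$ — no randomness, and $\alpha(R')\leq\alpha(R)$ follows combinatorially. For even $s=2\ell$ it places a blow-up colouring of the Fox--Loh--Zhao graph in one block and $(\ell-2)$ blow-up colourings of Brandt graphs in the others, then sprinkles random red edges \emph{between} blocks only; crucially, the blow-up colourings already provide red edges \emph{inside} each block with $\alpha(R[V_i])<(c-\delta)n$, so the pigeonhole argument of Claim~\ref{clm:prob} immediately gives two blocks with $\Theta(n)\times\Theta(n)$ intersection with $I$, making the random step elementary. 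Your proposal puts no red edges inside parts of $H$, so it leans entirely on the random layer and the worst-case $e(H[S])$ bound — a harder path. If you flesh out the $e(H[S])$ lemma and handle $k\to\infty$ for even $s$, your approach can likely be made to work, but as written it is not complete.
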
 

Our next theorem centres on triangles, where we explore the behaviour of $g(n;3,t)$ for a broader range of~$t$. Let us first recall what is known about $\RT{s}{cn}{n}$ when $s=3$. It follows from Mantel's theorem that $\RT{3}{cn}{n}=\floor{n^2/4}$ for any $c\geq 1/2$, while Brandt's work~\cite{brandt2010triangle} covers the range $c<1/3$ asymptotically. In the intermediate range, Andr{\'a}sfai proved that certain blow-ups of $C_5$ are extremal constructions for $\RT{3}{cn}{n}$ when $c\in[2/5, 1/2]$, and conjectured that similar constructions should provide extremal examples for the remaining range of $c$. {\L}uczak, Polcyn, and Reiher~\cite{luczak2022ramsey,luczak2025next} confirmed this conjecture for $c\in[4/11,2/5]$ and recently announced a proof of the conjecture. We refer the reader to~\cref{sec:RT_Theory}, for a detailed presentation of these results and the Andr{\'a}sfai Conjecture. Using these, we give an almost complete picture of $g(n;3;t)$, showing first that~\cref{eq:upper_bound_RT} is asymptotically correct when~$t=cn$ for every~$c$ for which the Andr{\'a}sfai Conjecture holds. We also bound $g(n;3,t)$ in terms of Ramsey-Tur\'{a}n numbers when~$t$ is sublinear.

\begin{thm}\label{thm:TriangleCase}\
\begin{enumerate}
    \item For all $c\in \left(0,1\right]\setminus \left(\frac{1}{3},\frac{4}{11}\right)$ we have \label{item:TriangleLinearCase}
    \begin{equation}
        g(n;3,cn) = (1-o(1)) \; \RT{3}{cn}{n}.
    \end{equation}
    Moreover, assuming the Andr\'asfai Conjecture, we obtain that the same result holds also for $c\in \left(\frac{1}{3},\frac{4}{11}\right)$.
    \item For every $\varepsilon>0$ there exists $\delta>0$ such that for sufficiently large~$n$ and~$t=(1+\varepsilon)\sqrt{2n\log n}$, we have
    \[g(n;3,t)\geq\delta\cdot \RT{3}{t}{n}.\]
    Moreover, if $t\gg \sqrt{n\log n}$, we have
    \[g(n;3,t)\geq \Big({\frac{1}{2}-o(1)}\Big)\; \RT{3}{t}{n}. \]\label{item:TriangleSublinearCase}
\end{enumerate}
\end{thm}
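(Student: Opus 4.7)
The plan rests on the following reformulation. For any red/blue/purple colouring of $K_n$, set $H := R \cup P$. Then triangle-freeness of $R\cup P$ becomes triangle-freeness of $H$, while the $K_t$-freeness of $B \cup P = E(K_n) \setminus R$ translates to $\alpha(R) < t$, since the clique number of the complement of $R$ equals its independence number. Conversely, every triangle-free $H$ on $[n]$ together with an edge-subset $R \subseteq E(H)$ satisfying $\alpha(R) < t$ yields a valid colouring via $P := E(H)\setminus R$ and $B := E(K_n)\setminus E(H)$, with $|P| = |E(H)| - |E(R)|$. The upper bound $g(n;3,t)\le \RT{3}{t}{n}$ of~\eqref{eq:upper_bound_RT} is immediate from $|E(H)|\le\RT{3}{t}{n}$, and both parts of the theorem reduce to exhibiting, for a suitable triangle-free $H$ close to Ramsey--Tur\'an extremal, an edge set $R\subseteq E(H)$ with $\alpha(R)<t$ and $|E(R)|$ small compared to $|E(H)|$.

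For Part (i), the plan is to use the extremal Ramsey--Tur\'an construction known in each range of $c$. When $c>1/2$ we take $H$ to be a balanced complete bipartite graph; since $H$ is bipartite, K\"onig's theorem gives $\alpha(R) = n - \mu(R)$, and choosing $R$ to be any matching of size $\lceil(1-c)n\rceil+1$ yields $\alpha(R)<cn$ with $|R|=O(n)$, so $|P| \geq (1-o(1))\RT{3}{cn}{n}$. In the remaining ranges, $H$ is (up to $o(n^2)$ edges) a balanced blow-up of the Andr\'asfai graph $\mathrm{And}_k$ for an appropriate $k=k(c)$: unconditional for $c\in[\tfrac{4}{11},\tfrac{1}{2}]$ by Andr\'asfai and by {\L}uczak--Polcyn--Reiher, Brandt-type for $c\leq 1/3$, and assumed via the Andr\'asfai Conjecture for $c\in(\tfrac{1}{3},\tfrac{4}{11})$. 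In these blow-up cases, I would take $R$ to be a random subgraph of $H$ in which each edge is kept independently with probability $p=C(c)/n$, and bound $\Pr[\alpha(R)\ge cn]$ by the first moment $\binom{n}{cn}(1-p)^{e_H^{\min}}$, where $e_H^{\min}$ denotes the minimum number of $H$-edges inside any $cn$-subset of $V(H)$. A simple averaging argument shows that for any such $S$, the number of parts $V_i$ with $|S\cap V_i|\ge \eta n$, for an appropriate $\eta = \eta(c)>0$, strictly exceeds $k=\alpha(\mathrm{And}_k)$, so two such parts form an edge in $\mathrm{And}_k$, producing $\Omega(n^2)$ edges of $H$ inside $S$; thus $e_H^{\min}=\Omega_c(n^2)$. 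Since $\log\binom{n}{cn}=O(n)$, the choice $p=\Theta(1/n)$ drives the union bound to zero, giving $|R|=O(n)=o(|E(H)|)$ and hence $|P|\geq(1-o(1))\RT{3}{cn}{n}$.

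For Part (ii), I would take $H$ to be a graph produced by the triangle-free random process, which by Bohman--Keevash and by Fiz Pontiveros--Griffiths--Morris satisfies $\alpha(H)=(\sqrt{2}+o(1))\sqrt{n\log n}$, has $|E(H)|$ asymptotic to the best known lower bound on $\RT{3}{t}{n}$ in the relevant range, and is quasi-random in the strong sense that every $t$-subset $S\subseteq V(H)$ contains $e_H(S)=(1-o(1))t^2|E(H)|/n^2$ edges. Sampling each $H$-edge independently into $R$ with probability $p$, the expected number of $R$-independent $t$-subsets is at most $\binom{n}{t}(1-p)^{e_H^{\min}}\le\exp\bigl(t\log(en/t) - p\cdot e_H^{\min}\bigr)$, which is $o(1)$ whenever $p\gtrsim t\log(en/t)/e_H^{\min}$. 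Substituting $|E(H)|\sim n^{3/2}\sqrt{\log n}/(2\sqrt{2})$ and $e_H^{\min}\approx t^2\sqrt{\log n/n}/(2\sqrt{2})$, one checks that at $t=(1+\eps)\sqrt{2n\log n}$ the required $p$ is a constant $p_0(\eps)<1$, giving the constant $\delta=1-p_0(\eps)>0$; and for $t\gg\sqrt{n\log n}$ this threshold drops below $\tfrac{1}{2}$, so the choice $p=\tfrac{1}{2}$ works and yields $|P|\geq(\tfrac{1}{2}-o(1))\RT{3}{t}{n}$.

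The main technical hurdle is the uniform quasi-randomness step in Part (ii): controlling $e_H^{\min}$ over all $\binom{n}{t}$ subsets at size close to the Shearer--Bohman--Keevash threshold $\sqrt{2n\log n}$, where the constants in the union bound must be matched carefully. This is a delicate but by now standard property of the triangle-free process. In Part (i), the analogous structural lower bound on $e_H^{\min}$ is essentially elementary once the extremal construction is available, but the latter is only known unconditionally outside $(\tfrac{1}{3},\tfrac{4}{11})$, which is why the second statement in Part (i) must be posed conditionally on the Andr\'asfai Conjecture.
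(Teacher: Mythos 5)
Your reformulation (triangle-free $H=R\cup P$, edge-subset $R\subseteq E(H)$ with $\alpha(R)<t$) is exactly the right framing, and the K\"onig argument for $c>1/2$ is a clean alternative to the paper's blow-up colouring of $K_2$. Beyond that, the proposal and the paper part ways, and the proposal has genuine gaps in both parts.

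In Part (i) for $c\in(1/3,1/2)$, you describe $H$ as ``(up to $o(n^2)$ edges) a balanced blow-up of $\Gamma_k$''. This is not so: the extremal construction is the \emph{canonical} unbalanced blow-up $\Gamma(n;k,t)$, whose edge count exceeds the balanced one's by $\Theta(n^2)$ except at the endpoint $t=kn/(3k-1)$ (cf.~\eqref{eq:SizeCanonicalBlowUp}). Worse, $\alpha(\Gamma(n;k,t))=t$ \emph{exactly}, so there is no slack for a first-moment subsampling argument: any $t$-independent set of $H$ has $e_H(S)=0$ and survives with probability one, regardless of $p$. Your averaging argument giving $e_H^{\min}=\Omega_c(n^2)$ tacitly assumes $\alpha(H)<cn-\Omega(n)$, which your choice of $H$ does not furnish. (One could patch this by taking $\Gamma(n;k,t-\eta n)$ and letting $\eta\to0$, but that is a non-trivial modification.) The paper instead gives a deterministic red subgraph: it partitions the canonical blow-up into $(3t-n)$ vertex-disjoint red copies of $\Gamma_k$ or $\Gamma_k\setminus\{1,k,2k\}$ and uses the structural fact that every $k$-independent set of $\Gamma_k$ meets $\{1,k,2k\}$ to obtain $\alpha(R)\le t$ with $|R|=O(n)$ outright.

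In Part (ii) the quasi-randomness claim and the resulting estimate of $e_H^{\min}$ are not correct. Take a $t$-set $S$ containing a maximum independent set $I$ of $H$, of size $(1+o(1))\sqrt{2n\log n}=(1-\Theta(\eps))t$. Every edge of $H[S]$ meets the small set $S\setminus I$ of $(\eps+o(1))\sqrt{2n\log n}$ vertices, so even an optimistic heuristic gives $e_H(S)=\Theta\bigl(\eps\,\sqrt{n}(\log n)^{3/2}\bigr)$, smaller by a factor $\Theta(\eps)$ than the typical value $(1+\eps)^2\sqrt{n}(\log n)^{3/2}/\sqrt{2}$ that you plug in. With the corrected $e_H^{\min}$, the requirement $p\gtrsim t\log(en/t)/e_H^{\min}$ yields $p\gtrsim(1+\eps)/(2\eps)>1$ for every $\eps<1$, i.e.\ in precisely the regime the theorem targets the sampling probability would have to exceed one. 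Independently of this, a uniform lower bound on $e_H(S)$ over all $\binom{n}{t}$ subsets at this critical scale is not contained in the published analyses of the triangle-free process and is far from a ``by now standard'' fact. The paper's route avoids the issue entirely: run the process to step $m_1=m^*(\eps_1)$, colour those edges red, continue to step $m_2=m^*(\eps_2)$ and colour the new edges purple; then $R\cup P=G_{m_2}$ is triangle-free by fiat, $\alpha(R)=\alpha(G_{m_1})<t$ follows directly from \cref{thm:FPGM}, and $|P|=m_2-m_1$ is explicit with no union bound over $t$-subsets required. Finally, for $t\gg\sqrt{n\log n}$ you cannot take $H$ to be the process output on $n$ vertices: its independence number is pinned near $\sqrt{2n\log n}$ and its edge count near $n^{3/2}\sqrt{\log n}/(2\sqrt{2})$, not $\Theta(nt)$. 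The paper runs the process on $k\approx n/\zeta^2\ll n$ vertices and uses an $n$-blow-up colouring to reach the required scale.
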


We remark that, even without assuming the Andr\'asfai Conjecture,~\cref{thm:TriangleCase}\ref{item:TriangleLinearCase} for $c=1/3$ implies that $g(n;3,c'n)$ is at least $0.91\, \RT{3}{c' n}{n}$ for $c'$ in the gap range $c'\in \left(1/3,4/11\right)$. 

Next we note that the lower bound on $t$ in~\cref{thm:TriangleCase}\ref{item:TriangleSublinearCase} is essentially best possible. Indeed, the current, best known bounds on $R(3,t)$ are 
\begin{equation}\label{eq:bounds-R3t}
\left(\frac14-o(1)\right)\, \frac{t^2}{\log t} \le R(3,t)\le (1+o(1))\, \frac{t^2}{\log t },  
\end{equation}
due to Bohman and Keevash~\cite{bohman2013dynamic},  Fiz Pontiveros, Griffiths, and Morris~\cite{fpgm2013}, and Shearer~\cite{shearer1983}. The upper bound implies that $g(n;3,t)$ is undefined if $t \le \gamma \sqrt{n\log n}$, for any $\gamma <  1/\sqrt{2}$. Moreover, the lower bound in~\eqref{eq:bounds-R3t} is conjectured to be asymptotically tight in both~\cite{bohman2013dynamic,fpgm2013}. If true, this would imply that $g(n;3,t)$ is undefined if $t \le \gamma \sqrt{n\log n}$, for any $\gamma <  \sqrt{2}$, that is, \cref{thm:TriangleCase}\ref{item:TriangleSublinearCase} would cover practically the full range of~$t$. 

Finally, we observe that we cannot expect the constant $\delta$ in~\cref{thm:TriangleCase}\ref{item:TriangleSublinearCase} to be close to~1 when~$t$ is within a constant factor of $\sqrt{n\log n}$. Indeed, we have the following. 

\begin{observation}\label{obs:ADMR}
    Let $\gamma$ and $\delta$ be real numbers satisfying $0<\delta < 1/(2\gamma^2) < 1$. Then there exists an integer $n_0$ such that for all $n\ge n_0$ and $t= \gamma \sqrt{n\log n}$ we have that 
    \(g(n;3,t) < (1-\delta)\cdot \RT{3}{t}{n},\) 
    assuming $g(n;3,t)$ is well-defined. 
\end{observation}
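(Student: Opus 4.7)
The plan is to extract information from the red graph $R$ alone (rather than from $R\cup P$) and invoke Shearer's bound---the very estimate behind $R(3,t)\le (1+o(1))t^2/\log t$ cited above---to show that $R$ already accounts for a positive fraction of the edges of any extremal $R\cup P$.

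The key observation is that $R$ itself is an $n$-vertex triangle-free graph with $\alpha(R)<t$. Indeed, $R$ is a subgraph of the triangle-free graph $R\cup P$, and any independent set in $R$ is a clique in its $K_n$-complement $B\cup P$, which is $K_t$-free by hypothesis. Applying Shearer's theorem to $R$, its average degree $d$ satisfies $\alpha(R)\ge (1-o(1))\,n\log d/d$. Combined with the trivial bound $d\le t-1=O(\sqrt{n\log n})$ (since neighbourhoods in a triangle-free graph are independent), this forces $\log d=(\tfrac{1}{2}+o(1))\log n$ and, upon substituting $t=\gamma\sqrt{n\log n}$, yields $d\ge (1-o(1))\,t/(2\gamma^2)$. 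Hence $|R|=nd/2\ge (1-o(1))\,nt/(4\gamma^2)$.

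On the other hand, the same max-degree observation applied to any extremal Ramsey--Tur\'an graph yields $\RT{3}{t}{n}\le n(t-1)/2\le nt/2$, so $|R|/\RT{3}{t}{n}\ge (1-o(1))/(2\gamma^2)$. Since $\delta<1/(2\gamma^2)$, for $n$ large enough we obtain $|R|>\delta\,\RT{3}{t}{n}$, and therefore $g(n;3,t)=|P|=|R\cup P|-|R|\le \RT{3}{t}{n}-|R|<(1-\delta)\,\RT{3}{t}{n}$.

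The conceptual step---applying Shearer to $R$ rather than to $R\cup P$---is the only non-routine point, and I do not foresee a genuine obstacle. The factor $1/(2\gamma^2)$ coming out of the Shearer calculation exactly matches the hypothesis $\delta<1/(2\gamma^2)$, so the argument leaves slack for every admissible $\delta$ once $n$ is large enough.
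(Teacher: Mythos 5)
Your proof is correct and uses the same key idea as the paper's: applying Shearer's theorem to the red graph $R$ alone, which is triangle-free with $\alpha(R) < t$, rather than to $R\cup P$. The paper phrases the argument as a contradiction (assume $|R|\le\delta\cdot\RT{3}{t}{n}$, compute an average degree at most $\delta t$, and show Shearer then forces $\alpha(R)\ge t$), whereas you argue directly (Shearer bounds the average degree of $R$ from below, hence $|R|$ is large); these are contrapositive formulations of the same estimate, and the threshold $1/(2\gamma^2)$ comes out identically.
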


We would like to thank Anastos, Das, Morris and Rathke~\cite{AnastosPrivComm} for pointing this fact out to us, which is a simple consequence of Shearer's proof of the lower bound in~\eqref{eq:bounds-R3t}. We provide the details in~\cref{sec:TriangleSubLinear}.

To discuss smaller ranges of $t$, define the {\em inverse Ramsey number}, denoted by $R^{-1}(s,n)$, to be the minimum independence number of a $K_s$-free graph on~$n$ vertices. Observe that $g(n; s,t)$ is defined if and only if $R^{-1}(s,n)<t$. Using a probabilistic argument, we give a general lower bound on $g(n;s,t)$ for $t$ almost as small as $R^{-1}(s,n)$.

\begin{theorem}\label{thm:ProbabilisticLowerBound}
    There exists an absolute constant $p>0$ such that for any fixed $s\geq 3$, for~$n$ large enough and any $t\in\brackets*{R^{-1}(s,n)\log n,n}$, we have
    \[p\cdot\RT{s}{t/\log n}{n}\leq g(n;s,t).\]
\end{theorem}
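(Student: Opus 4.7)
My plan is to construct the colouring by randomly purpling edges of an extremal Ramsey-Tur\'{a}n graph and then applying the deletion method. Let $a := \lceil t/\log n\rceil$; since $t \ge R^{-1}(s,n)\log n$, we have $a > R^{-1}(s,n)$, so there exists a $K_s$-free graph $G$ on $n$ vertices with $\alpha(G) < a$ and $e(G) = \RT{s}{a}{n}$. Fix the constant $p := e^{-2}$. Colour every edge of $G$ independently purple with probability $p$ and red otherwise, and colour every edge of $K_n \setminus E(G)$ blue. By construction $R \cup P = G$ is $K_s$-free, so only the $K_t$-cliques of $B \cup P$ remain to be controlled.

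A $t$-set $T$ spans a clique in $B \cup P$ if and only if every edge of $G[T]$ is purple, which has probability $p^{e(G[T])}$. Since $\alpha(G[T]) \le \alpha(G) < a$, the complement $\overline{G[T]}$ is $K_a$-free, and Tur\'{a}n's theorem yields
\[
e(G[T]) \;\ge\; \binom{t}{2} - \left(1 - \tfrac{1}{a-1}\right)\tfrac{t^2}{2} \;\ge\; \tfrac{t(\log n - 1)}{2}.
\]
Letting $Y$ denote the number of $K_t$-cliques of $B \cup P$, this implies
\[
\mathbb{E}[Y] \;\le\; \binom{n}{t}\, p^{\,t(\log n-1)/2} \;\le\; \left(\tfrac{en}{t}\right)^{t} e^{-t(\log n -1)} \;=\; \left(\tfrac{e^{2}}{t}\right)^{t},
\]
which is $o(1)$ since $t \ge R^{-1}(s,n)\log n \to \infty$ with $n$.

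To conclude, linearity of expectation gives $\mathbb{E}[|P| - Y] \ge p\,\RT{s}{a}{n} - o(1)$, so some realization of the colouring satisfies $|P| - Y \ge p\,\RT{s}{a}{n} - o(1)$. In that realization, recolour one purple edge from each bad $K_t$-clique of $B \cup P$ to red. This preserves $R \cup P = G$ (hence $K_s$-freeness), destroys every bad clique, and reduces $|P|$ by at most $Y$. The trivial estimate $\RT{s}{a}{n} \ge n^{2}/(2a) - n/2 \to \infty$ then ensures that the resulting $(s,t)$-free colouring has at least $(p/2)\,\RT{s}{a}{n}$ purple edges for $n$ large, establishing the theorem with absolute constant $e^{-2}/2$.

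The main obstacle is calibrating $p$ as a \emph{universal} constant. The exponent $t(\log n-1)/2$ appearing on $p$ must dominate the entropy $\binom{n}{t}$ uniformly over $s \ge 3$; the logarithmic slack in the hypothesis $t \ge R^{-1}(s,n)\log n$ is precisely what allows $p = e^{-2}$ to work across all $s$. A weaker lower bound on $t$ would force $p$ to depend on $s$ or on the ratio $t/R^{-1}(s,n)$, and one would lose the uniform constant in the conclusion.
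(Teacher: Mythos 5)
Your proposal is correct and takes essentially the same approach as the paper's: both randomly partition the edges of an extremal graph for $\RT{s}{\cdot}{n}$ into red and purple with a universal retention probability, both use Tur\'an's theorem on the complement of $G[T]$ to force $e(G[T]) = \Omega(t\log n)$, and both conclude via a first-moment bound over $t$-sets. The only procedural difference is that the paper applies the bound directly as a union bound on $\Pr[\alpha(R)\ge t]$ (combined with concentration of $|P|$), while you phrase it as a deletion argument; since $\mathbb{E}[Y]=o(1)$ in both cases, the two are interchangeable here. One minor remark: the paper sets $\log$ to base $2$, whereas your simplification $(en/t)^t e^{-t(\log n-1)}=(e^2/t)^t$ implicitly treats $\log$ as natural — but switching to base $2$ only shrinks the bound further, so this does not affect validity, and your sharper Tur\'an estimate even lets you take a larger constant ($e^{-2}$ versus the paper's $e^{-4}$).
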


To compare this general lower bound with the upper bound in~\eqref{eq:upper_bound_RT}, it is useful to understand the behaviour of the Ramsey-Turán numbers $\RT{s}{t}{n}$ and $\RT{s}{t'}{n}$ when $t$ and $t'$ are  not too far apart.
For example, Theorem~1.10 in~\cite{fox2015critical} implies that $\RT{4}{t}{n}\ge n^2/8+o(n^2)$ if $t= n/(\log n)^K$, for any constant $K>1$. Thus, \cref{thm:ProbabilisticLowerBound} implies that for $t= n/(\log n)^{K-1}$ we have that $g(n;s,t)=\Theta(n^2)$. Sudakov proved that if $t= e^{-\omega((\log n)^{1/2})}n$ then $\RT{4}{t}{n}=o(n^2)$, from which we infer via~\eqref{eq:upper_bound_RT} that $g(n;4,t) =o(n^2)$ for such $t$. More generally, Balogh, Hu, and Simonovits~\cite{balogh2015phase} showed that $\RT{s}{t}{n}=o(n^2)$ when $t=R^{-1}(s,n)n^{\varepsilon}$ for some $\varepsilon\leq 1/s^2$. They also gave a more precise range of~$t$ where a transition from $\RT{s}{t}{n}=\Theta(n^2)$ to $\RT{s}{t}{n}=o(n^2)$ occurs, assuming the 
following long-standing conjecture on Ramsey numbers. 
\begin{conjecture}\label{conj:OnRamsey}
    For every integer $s\geq 3$, there exist $\vartheta=\vartheta(s)>0$ such that for any $t$ large enough
    \[R(s-1,t)t^\vartheta\leq R(s,t).\]
\end{conjecture}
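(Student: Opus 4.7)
The plan is to split the conjecture into the small-$s$ cases, where it follows from the known asymptotics of the Ramsey numbers cited in the introduction, and the general case $s\ge 5$, which is a notorious open problem in Ramsey theory. The guiding philosophy is that a proof can only come from lower bounds on $R(s,t)$ that are a polynomial factor larger than the best upper bounds on $R(s-1,t)$.

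For $s=3$ the conjecture is essentially immediate: $R(2,t)=t$ by definition, and the Bohman--Keevash and Fiz Pontiveros--Griffiths--Morris lower bound recalled in \eqref{eq:bounds-R3t} gives $R(3,t)\ge (1/4-o(1))t^{2}/\log t$. Hence $R(2,t)\cdot t^{\vartheta}\le R(3,t)$ for every $\vartheta<1$ and all sufficiently large $t$, so $\vartheta(3)=1-o(1)$ is admissible. For $s=4$, combine Shearer's upper bound $R(3,t)\le (1+o(1))t^{2}/\log t$ with the Mattheus--Verstra\"ete lower bound $R(4,t)=\Omega(t^{3}/(\log t)^{4})$ (tight with respect to Ajtai--Koml\'os--Szemer\'edi up to a factor $(\log t)^{2}$); the resulting ratio $R(4,t)/R(3,t)=\Omega(t/(\log t)^{3})$ again allows $\vartheta(4)=1-o(1)$.

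For $s\ge 5$ the approach must be constructive, since the existing random-graph lower bounds on $R(s,t)$ are far below the Ajtai--Koml\'os--Szemer\'edi upper bound $O(t^{s-1}/(\log t)^{s-2})$. A natural programme is: take a near-extremal $K_{s-1}$-free graph $H$ on $N=R(s-1,t)-1$ vertices with $\alpha(H)<t$ and amplify it to a $K_s$-free graph $G$ on $\Omega(N\cdot t^{\vartheta})$ vertices with $\alpha(G)<t$. Two strategies seem most promising. First, algebraic amplifications in the spirit of Mattheus--Verstra\"ete, replacing the underlying projective-plane incidence structure by a higher-dimensional analogue designed to forbid $K_s$ rather than $K_{s-1}$. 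Second, running the random $K_s$-free process on $K_N$ for $N$ polynomially larger than $R(s-1,t)$ and tracking the independence number via the differential-equation method of Bohman and Keevash; the hope would be that the extra room provided by avoiding only $K_s$ rather than $K_{s-1}$ yields the desired $t^{\vartheta}$ improvement.

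The main obstacle is exactly this gap: for $s\ge 5$, no known technique establishes a lower bound on $R(s,t)$ that is a polynomial factor $t^{\vartheta}$ larger than the best upper bound on $R(s-1,t)$. Any proof of the conjecture in this range would therefore constitute a substantial breakthrough on the asymptotics of off-diagonal Ramsey numbers, comparable to the recent progress in the $s=4$ case, and would have to overcome the well-documented barriers that have resisted decades of effort.
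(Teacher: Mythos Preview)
The statement you were asked to prove is explicitly labelled a \emph{conjecture} in the paper, and the paper makes no attempt to prove it; it is invoked only as a hypothesis in \cref{theorem:BHSphase} and \cref{prop:Generalbehaviour_precise}. So there is no ``paper's own proof'' to compare against.

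Your analysis of the cases $s=3$ and $s=4$ is correct and is precisely why these small cases are not the content of the conjecture: the relevant bounds on $R(3,t)$ and $R(4,t)$ are already cited in the introduction. For $s\ge 5$ you do not provide a proof at all---you sketch two possible strategies and then explain, accurately, why neither is known to work. That is an honest assessment of the state of the art, but it is not a proof proposal; it is an explanation of why the conjecture is open. In particular, the amplification and $K_s$-free process ideas you outline are well known to the community and have not yielded the required polynomial separation between $R(s,t)$ and $R(s-1,t)$ for any $s\ge 5$. A genuine proof would need to supply exactly the missing ingredient you identify, and nothing in your write-up does so.
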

Assuming this conjecture, it is proved in~\cite{balogh2015phase} that  $\RT{s}{R^{-1}(\ell,n)}{n}$ is quadratic in~$n$ if $\ell\leq\frac{s+1}{2}$ and sub-quadratic if $\ell> \frac{s+1}{2}$. Using this result and our probabilistic bound from~\cref{thm:ProbabilisticLowerBound}, we derive the following general behaviour of $g(n;s,t)$.
\begin{theorem}\label{prop:Generalbehaviour_precise}
    For any $s\geq 3$, assuming~\cref{conj:OnRamsey},
    \begin{enumerate}
        \item if $t\geq R^{-1}\parens*{\frac{s+1}{2},n}\log n$, then $g(n;s,t)=\Theta(n^2)$.
        \item if $t\leq R^{-1}\parens*{\frac{s+3}{2},n}$, then $g(n;s,t)=o(n^2)$.
    \end{enumerate}
\end{theorem}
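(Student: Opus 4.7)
The plan is to derive both parts directly from the probabilistic lower bound (\cref{thm:ProbabilisticLowerBound}), the trivial Ramsey-Tur\'an upper bound~\eqref{eq:upper_bound_RT}, and the dichotomy of Balogh, Hu, and Simonovits~\cite{balogh2015phase} mentioned in the paragraph above: under \cref{conj:OnRamsey}, the quantity $\RT{s}{R^{-1}(\ell,n)}{n}$ is $\Theta(n^2)$ when $\ell\le \frac{s+1}{2}$ and $o(n^2)$ when $\ell>\frac{s+1}{2}$. Throughout, I will use two monotonicity facts: $R^{-1}(\cdot,n)$ is non-increasing in its first argument, because every $K_\ell$-free graph is also $K_{\ell+1}$-free, so enlarging $\ell$ admits graphs with possibly smaller independence number; and $\RT{s}{\cdot}{n}$ is non-decreasing in its second argument, since relaxing the bound on the independence number enlarges the class of admissible graphs.

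For part~(i), assuming $t\ge R^{-1}\parens*{\tfrac{s+1}{2},n}\log n$, I would first check that the hypothesis of \cref{thm:ProbabilisticLowerBound} is met. By monotonicity of $R^{-1}$ and $\tfrac{s+1}{2}\le s$, we have $R^{-1}\parens*{\tfrac{s+1}{2},n}\ge R^{-1}(s,n)$, so indeed $t\ge R^{-1}(s,n)\log n$ (and $t\le n$ may be assumed, since otherwise the problem degenerates). Applying \cref{thm:ProbabilisticLowerBound} and then monotonicity of $\RT{s}{\cdot}{n}$ yields
\[ g(n;s,t)\ge p\cdot \RT{s}{t/\log n}{n} \ge p\cdot \RT{s}{R^{-1}\parens*{\tfrac{s+1}{2},n}}{n}. \]
By the Balogh-Hu-Simonovits dichotomy applied with $\ell=\tfrac{s+1}{2}$, the right-hand side is $\Theta(n^2)$, and combining with the trivial upper bound $g(n;s,t)\le\binom{n}{2}$ completes the case.

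For part~(ii), assuming $t\le R^{-1}\parens*{\tfrac{s+3}{2},n}$, I would instead invoke the trivial upper bound~\eqref{eq:upper_bound_RT} together with monotonicity of $\RT{s}{\cdot}{n}$ to obtain
\[ g(n;s,t)\le \RT{s}{t}{n}\le \RT{s}{R^{-1}\parens*{\tfrac{s+3}{2},n}}{n}. \]
Since $\tfrac{s+3}{2}>\tfrac{s+1}{2}$, the Balogh-Hu-Simonovits dichotomy gives that this last quantity is $o(n^2)$, as desired.

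There is no genuine obstacle here: the argument is a direct composition of cited results. The only care required is bookkeeping around the non-integer values $\tfrac{s+1}{2}$ and $\tfrac{s+3}{2}$, handled by the paper's convention of taking ceilings, and spelling out the two monotonicity statements used above. The real mathematical work has been absorbed into \cref{thm:ProbabilisticLowerBound} and the result of~\cite{balogh2015phase}.
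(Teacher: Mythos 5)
Your proof is correct and follows essentially the same route as the paper: lower bound via \cref{thm:ProbabilisticLowerBound}, upper bound via \eqref{eq:upper_bound_RT}, and the Balogh--Hu--Simonovits dichotomy (\cref{theorem:BHSphase}) to evaluate both. The only difference is that you spell out the two monotonicity observations and the verification that $t\ge R^{-1}(s,n)\log n$, which the paper leaves implicit.
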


{\bf Organisation.}
In~\cref{sec:Preliminaries} we introduce notation and the key tool that will be used throughout the article.~\cref{sec:TriangleLinearSmall} is devoted to the proof of~\cref{thm:TriangleCase}\ref{item:TriangleLinearCase}.~\cref{sec:TriangleSubLinear} concludes the triangle case, proving~\cref{thm:TriangleCase}\ref{item:TriangleSublinearCase}. In~\cref{sec:GeneralS}, we present our results for general~$s$ proving~\cref{thm:GeneralSmallLinear,thm:ProbabilisticLowerBound}. We conclude the article with some computational results in~\cref{sec:comp} and discussions and open problems in~\cref{sec:Conclusion}.

\section{Preliminaries}\label{sec:Preliminaries}

{\bf Notation.} 
We use standard graph theoretic notation. In particular, given a graph~$G$, we write~$V(G)$ for its vertex set and~$E(G)$ for its edge set, with~$v(G)=|V(G)|$ and~$e(G)=|E(G)|$. We further write~$\alpha(G)$, $\omega(G)$, and~$\delta(G)$ for the independence number, for the clique number, and for the minimum degree  of $G$, respectively. Given graphs~$F$ and~$G$, we say that~$G$ is~$F$-free if~$G$ does not contain a subgraph that is  isomorphic to~$F$. 
A {\em blow-up} of an $n$-vertex graph $G$ is a graph obtained by replacing each vertex $i$ of $G$ by an independent set $V_i$, and each edge $ij$ of $G$ by a complete bipartite graph between the parts $V_i$ and $V_j$. A blow-up is called {\em equitable} if $\big| |V_i|-|V_j|\big|\le 1$ for all $i,j$. 

We often abuse notation identifying a graph with its edge set. For example, we write~$\alpha(R)$ for the independence number of the spanning subgraph of~$K_n$ induced on the edges of~$R$, 
for a $3$-colouring \RBP{} of~$K_n$, and similarly for the clique number $\omega(R)$. Furthermore, for a subset $S\subseteq V(K_n)$, we write~$R[S]$ for the subgraph induced on the vertices of~$S$ and the edges in~$R$.

Unless stated otherwise, logarithms are in base~$2$. We use the standard Landau symbols $O$, $\Omega$, $\Theta$, and $o$ to denote the asymptotic behaviour of functions with respect to~$n$, the number of vertices, tending to $\infty$, as well as $f(n)\gg g(n)$ to denote $g(n)=o(f(n))$. We say that an event $A(n)$ holds with high probability if~$\lim_{n\to\infty}\Pr[A(n)] = 1$.

\subsection{Blow-up colouring}

We start with a general construction of a red/blue/purple colouring \RBP{} of $K_n$ that we will use throughout the article. 

\begin{definition}[Blow-up colouring]\label{def:PurpleBlowUp}
    Let $n\ge k \ge 1$ be integers and let~$G$ be a graph on~$k$ vertices $v_1,\ldots,v_k$. The {\em $n$-blow-up colouring} of~$G$ is a colouring \RBP{} of~$K_{n}$ defined as follows.
    Let $H$ be an equitable blow-up of $G$ created by replacing every vertex $v_i\in V(G)$ by a set of vertices $V_i$ of size either $\beta^{-}=\floor{\frac{n}{k}}$ or $\beta^{+}=\ceil{\frac{n}{k}}$, such that $\sum_i |V_i|=n$, and replacing every edge $v_iv_j$ of $G$ by a complete bipartite graph between the sets $V_i$ and $V_j$. For each $i\in[k]$, denote the vertices of $V_i$ by $\set{u_1^i,\ldots,u_{\beta^{-}}^i}$ or $\set{u_1^i,\ldots,u_{\beta^{+}}^i}$. For every edge $v_iv_j$ of $G$, and for every $\ell\in[\beta^{+}]$, colour the edge $u_{\ell}^iu_{\ell}^j$ of $H$, if it exists, red. Colour all other edges of $H$ purple. Finally, colour all edges in the complement of $H$ blue. Let $R$ be the set of all red edges, $B$ the set of all blue edges, and $P$ the set of all purple edges.
\end{definition}

An illustration of a $10$-blow-up colouring of the cycle graph $C_4$ can be found in~\cref{fig:PurleBlowUp}. We remark that $n$-blow-up colourings are not uniquely defined, as their definition is up to the choice of vertices being replaced by sets of size $\beta^+$ or $\beta^-$. However, in every $n$-blow-up colouring of a graph $G$, the red edges form $\beta^{-}$ vertex-disjoint copies of $G$, and possibly a vertex-disjoint copy of an induced subgraph of $G$. 
\begin{figure}[ht]
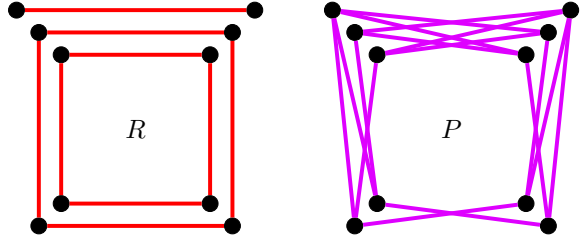

    \centering
    \ExamplePurpleBlowUp{1.4}
    \caption{This is an example of red and purple edges in a 10-blow-up colouring of $C_4$.}
    \label{fig:PurleBlowUp}
\end{figure}

The following proposition captures the core properties of blow-up colourings.

\begin{proposition}\label{thm:PropPurpleBlowUp}
    Let $G$ be a graph on~$k$ vertices and $n\ge k$ be an integer. Each $n$-blow-up colouring $\RBP$ of $G$ satisfies 
    \begin{enumerate}
        \item $\omega(R\cup P)=\omega(G)$, \label{item:RPClique}
        \item $\alpha(R)=\omega(B\cup P)\leq \ceil*{n/k}\, \alpha(G)$,\label{item:BPClique}
        \item $\big| |P| - e(G)\, n^2/k^2 \big| \le e(G)3n/k,$ and \label{item:SizeP}
        \item $|R|\le e(G)\, (n/k+1)$. \label{item:SizeR} 
    \end{enumerate}
\end{proposition}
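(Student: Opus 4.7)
The plan is to read off all four items directly from the structural description of the $n$-blow-up colouring: the graph $R\cup P$ is precisely the equitable blow-up $H$ of $G$, while the red subgraph $R$ decomposes into vertex-disjoint copies of (induced subgraphs of) $G$, one per ``diagonal slice''. Concretely, write $\beta^-=\lfloor n/k\rfloor$ and $\beta^+=\lceil n/k\rceil$, and for each index $\ell\in[\beta^+]$ set $W_\ell=\{u_\ell^i : \ell\le |V_i|\}$. Then the red edges inside $W_\ell$ form a copy of $G$ for $\ell\le\beta^-$ and a copy of an induced subgraph of $G$ for $\ell=\beta^+$, the sets $W_\ell$ partition $V(K_n)$, and no red edge runs between distinct $W_\ell$'s.

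For \ref{item:RPClique}, each part $V_i$ of $H$ is independent, so any clique of $H$ meets every part in at most one vertex, forcing $\omega(R\cup P)=\omega(H)\le\omega(G)$; equality is realised by selecting one representative from each part indexed by a maximum clique of $G$. For \ref{item:BPClique}, since $R$, $B$, $P$ partition $E(K_n)$ we have $B\cup P=E(K_n)\setminus R$, hence $\omega(B\cup P)=\alpha(R)$; combined with the vertex-disjoint decomposition of $R$ this yields $\alpha(R)=\sum_{\ell=1}^{\beta^+}\alpha(G_\ell)\le\beta^+\alpha(G)=\lceil n/k\rceil\alpha(G)$, where $G_\ell$ denotes the red graph on $W_\ell$.

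For the edge counts, the natural approach is to sum the contribution of each edge of $G$ separately. For every $v_iv_j\in E(G)$, the complete bipartite graph between $V_i$ and $V_j$ contributes exactly $|V_i|\,|V_j|$ edges to $H$, of which $\min(|V_i|,|V_j|)$ are red (the diagonal matching) and the remaining $|V_i||V_j|-\min(|V_i|,|V_j|)$ are purple. Using $|V_i|,|V_j|\in\{\beta^-,\beta^+\}\subseteq[n/k-1,n/k+1]$, the bound $\min(|V_i|,|V_j|)\le n/k+1$ immediately gives \ref{item:SizeR}. For \ref{item:SizeP}, the per-edge purple count takes one of the three values $\beta^-(\beta^--1)$, $\beta^-\beta^+$, or $\beta^+(\beta^+-1)$, each of which a short arithmetic check shows to lie within $3n/k$ of $(n/k)^2$; summing over the $e(G)$ edges yields the claimed bound. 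The only genuine calculation is this per-edge estimate for \ref{item:SizeP}, and even that is elementary; the remaining items follow at once from the structural picture of the blow-up colouring.
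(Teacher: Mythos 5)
Your proof is correct and establishes all four items. For \ref{item:RPClique} and \ref{item:BPClique}, you follow essentially the same route as the paper: every part $V_i$ is independent in $H=R\cup P$ so $\omega(R\cup P)=\omega(G)$, and $R$ decomposes into $\lceil n/k\rceil$ vertex-disjoint induced subgraphs of $G$, whence $\alpha(R)\le\lceil n/k\rceil\alpha(G)$ and the complementarity $\omega(B\cup P)=\alpha(R)$ is noted.

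For \ref{item:SizeP} and \ref{item:SizeR} you take a genuinely different route from the paper. You count, for each edge $v_iv_j\in E(G)$, the $|V_i||V_j|$ bipartite edges it contributes, split them into $\min(|V_i|,|V_j|)$ red and the rest purple, and sum. The paper instead sandwiches the colouring between two ``clean'' blow-up colourings $R^-\cup B^-\cup P^-$ on $mk$ vertices and $R^+\cup B^+\cup P^+$ on $(m+1)k$ vertices (with $m=\lfloor n/k\rfloor$), so that $P^-\subseteq P\subseteq P^+$ and $R\subseteq R^+$, then reads off $e(G)(m^2-m)\le|P|\le e(G)((m+1)^2-(m+1))$ directly without any per-edge case analysis. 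Your approach is slightly more elementary and transparent; the paper's sandwich trades the case-check on $|V_i|\in\{\beta^-,\beta^+\}$ for a clean inclusion of edge sets. Both are short and valid.

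One small arithmetical slip worth correcting: in the mixed case $|V_i|=\beta^-$, $|V_j|=\beta^+$, the purple count is $\beta^-\beta^+-\min(\beta^-,\beta^+)=\beta^-(\beta^+-1)=(\beta^-)^2$, not $\beta^-\beta^+$ as you wrote (which is the count \emph{before} removing the red diagonal, and incidentally equals $\beta^+(\beta^+-1)$, so you have really listed only two distinct values). The true three values are $\beta^-(\beta^--1)$, $(\beta^-)^2$, $\beta^+(\beta^+-1)$, i.e.\ $m(m-1),m^2,m(m+1)$ with $m=\lfloor n/k\rfloor$. Since the value you wrote down dominates the true one and is still within $3n/k$ of $(n/k)^2$, the final bound goes through unchanged, but the statement of the case should be fixed.
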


\begin{proof}
Let \RBP{} be an $n$-blow-up colouring of $G$. Trivially, any edge in $R\cup P$ corresponds to an edge in~$G$, while for any two vertices $u,v \in V_i$ for some~$i$, we have $uv\not\in R\cup P$. Therefore $\omega(R\cup P)=\omega(G)$, verifying~\cref{item:RPClique}.
The red graph $R$ consists of $\ceil*{\frac{n}{k}}$ vertex-disjoint copies of induced subgraphs of~$G$ that cover all~$n$ vertices. Therefore, $\alpha(R)\le \alpha(G)\, \ceil*{\frac{n}{k}}$, proving~\cref{item:BPClique}. 
To verify~\cref{item:SizeP,item:SizeR}, let $m=\floor*{\frac{n}{k}}$ and 
note that 
\[R^-\subseteq R\subseteq R^+,\qquad P^-\subseteq P\subseteq P^+,\]
where $R^-\cup B^- \cup P^-$ is an $(mk)$-blow-up colouring of $G$, and where $R^+\cup B^+ \cup P^+$ is an $((m+1)k)$-blow-up colouring of $G$. We then obtain
\[e(G)(m^2-m)
\leq|P| \leq 
e(G)\big((m+1)^2-(m+1)\big),\]
hence 
\[e(G)\Big(\frac{n^2}{k^2}-\frac{3n}{k}+1\Big)
\leq|P| \leq 
e(G)\Big(\frac{n^2}{k^2}+\frac{n}{k}+1\Big),\]
and
\[\size*{ |P| - e(G)\, \frac{n^2}{k^2} } \le e(G)\frac{3n}{k}.\qedhere\]
\end{proof}

\subsection{Ramsey-Tur\'{a}n theory}\label{sec:RT_Theory}

In this subsection, we summarise results on Ramsey-Tur\'an numbers that we utilise in this paper. We start with Tur\'an's Theorem. 
For non-negative integers~$s$ and~$n$, denote by~$T_{n,s}$ the equitable blow-up of $K_{s}$ on $n$ vertices. 

\begin{theorem}[Tur\'an~\cite{turan1941extremal}]\label{thm:Turan}
    For $s\geq 3$, the maximal number of edges among all $n$-vertex $K_s$-free graphs is obtained uniquely by $T_{n,s-1}$. 
\end{theorem}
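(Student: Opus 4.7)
The plan is to prove Tur\'an's Theorem via Zykov's symmetrization argument, which yields uniqueness essentially for free. Let $G$ be a $K_s$-free graph on $n$ vertices with the maximum possible number of edges; I aim to show $G\cong T_{n,s-1}$.

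The first, structural step is to argue that $G$ is a complete multipartite graph. Given two non-adjacent vertices $u,v$ with $\deg(u)<\deg(v)$, consider the \emph{cloning} operation that removes all edges incident to $u$ and replaces them with edges from $u$ to $N(v)$. This preserves $K_s$-freeness: any $K_s$ in the new graph containing $u$ would, upon replacing $u$ by $v$ and using $N_{\text{new}}(u)=N(v)$, yield a $K_s$ in $G$ (note $u$ and $v$ remain non-adjacent, so $v$ is not in the clique). Since the operation strictly increases the edge count, this contradicts extremality, so any two non-adjacent vertices of $G$ have the same degree. Now suppose $uv,vw\notin E(G)$ but $uw\in E(G)$; then $\deg(u)=\deg(v)=\deg(w)$. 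Cloning both $u$ and $w$ by $v$ (sequentially, making them twins of $v$) produces a $K_s$-free graph in which $u,v,w$ are pairwise non-adjacent with identical neighborhoods, and a careful edge count shows the total number of edges has strictly increased by $1$, since the edge $uw$ is destroyed but the degree gains at $u$ and $w$ outweigh the losses. This contradiction shows non-adjacency is transitive, so $G$ is complete multipartite.

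The second step is optimization over complete multipartite graphs. Being $K_s$-free, $G$ has at most $s-1$ parts; maximality forces exactly $s-1$ parts, since otherwise splitting a part of size $\ge 2$ into two parts creates only new edges and preserves $K_s$-freeness. If the parts have sizes $a_1,\ldots,a_{s-1}$ with $\sum a_i=n$, then
\[
e(G)=\binom{n}{2}-\sum_{i=1}^{s-1}\binom{a_i}{2}.
\]
By strict convexity of $x\mapsto\binom{x}{2}$, the sum on the right is minimized if and only if $|a_i-a_j|\le 1$ for all $i,j$, which is precisely the Tur\'an graph $T_{n,s-1}$. This proves both extremality and uniqueness.

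The main obstacle is the careful bookkeeping in the double-cloning step: one must verify that after turning $u$ into a twin of $v$ the second operation on $w$ still preserves $K_s$-freeness (using that $u,v,w$ become pairwise non-adjacent twins and hence no clique can contain two of them), and that the net change in edges is strictly positive. Once that structural dichotomy is in place, the remainder is a routine convexity argument.
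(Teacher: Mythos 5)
The paper does not supply a proof of Tur\'an's theorem; it is stated as a classical result and cited directly to Tur\'an's 1941 paper~\cite{turan1941extremal}. There is therefore no in-paper proof to compare against.

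Your argument is a correct rendering of the Zykov symmetrization proof. The single-cloning step correctly shows non-adjacent vertices must have equal degree in an extremal graph, and the double-cloning step correctly eliminates the possibility of a non-transitive non-adjacency relation. The edge count in the double-cloning step, which you described as ``careful bookkeeping,'' does work out: writing $d=\deg(u)=\deg(v)=\deg(w)$, cloning $u$ to a twin of $v$ leaves the edge count unchanged (both sides equal $d$, and $u,v,w\notin N(v)$), but deletes the edge $uw$ so $w$ drops to degree $d-1$; cloning $w$ then raises it back to $d$, a strict net gain of one edge. The preservation of $K_s$-freeness is handled as you say, since any clique can contain at most one of the pairwise-nonadjacent twins $u,v,w$ and can thus be replaced by a clique in the original graph avoiding $u,w$. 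The closing convexity argument is the standard one and also correct. This is a well-known, self-contained proof that is essentially orthogonal to the rest of the paper, which merely quotes the statement.
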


Since the independence number of $T_{n,s-1}$ is equal to $\lceil n/(s-1)\rceil$, we obtain immediately that 
\begin{align}\label{eq:Turan-edges}
    \RT{s}{t}{n} &= e(T_{n,s-1}) = \frac12\Big(1-\frac{1}{s-1}\Big)n^2+O(n)
\end{align}
for all $t> n/(s-1)$. 
It is clear that the red/purple subgraph $R\cup P$ of an $n$-blow-up colouring of $K_{s-1}$ is exactly the Tur\'an graph $T_{n,s-1}$ and therefore, \cref{thm:PropPurpleBlowUp} implies that $g(n;s,t)$ is asymptotically equal to \RT{s}{t}{n} when $t>n/(s-1)$.

\begin{corollary}\label{thm:LinearLargeC}
    For any $s\geq 3$, and $c>1/(s-1)$, we have
    \[g(n;s,cn) = \big(1-O(1/n)\big)\, \RT{s}{cn}{n}.\]
\end{corollary}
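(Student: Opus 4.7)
The plan is to establish the corollary by matching the general upper bound from~\eqref{eq:upper_bound_RT} with a lower bound coming from an explicit construction. The upper bound $g(n;s,cn)\leq \RT{s}{cn}{n}$ is already provided by~\eqref{eq:upper_bound_RT}, so the whole content of the corollary lies in exhibiting an $(s,cn)$-free red/blue/purple colouring whose purple graph carries nearly that many edges.

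For the lower bound, the natural candidate is the $n$-blow-up colouring of $K_{s-1}$, and essentially all the work is then done by \cref{thm:PropPurpleBlowUp}. First, I would check that this colouring is $(s,cn)$-free. \cref{thm:PropPurpleBlowUp}\ref{item:RPClique} immediately gives $\omega(R\cup P)=\omega(K_{s-1})=s-1<s$, while \cref{thm:PropPurpleBlowUp}\ref{item:BPClique}, combined with $\alpha(K_{s-1})=1$, yields $\omega(B\cup P)\leq \ceil{n/(s-1)}$. The hypothesis $c>1/(s-1)$ then forces $\ceil{n/(s-1)}<cn$ once $n$ is large enough in terms of $c$ and $s$ (concretely, as soon as $(c-1/(s-1))n>1$), so $B\cup P$ is $K_{\ceil{cn}}$-free.

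Next, I would read off the number of purple edges from \cref{thm:PropPurpleBlowUp}\ref{item:SizeP} applied with $G=K_{s-1}$, obtaining
\[|P| \;\geq\; \binom{s-1}{2}\,\frac{n^2}{(s-1)^2} \;-\; O(n) \;=\; \frac{1}{2}\Big(1-\frac{1}{s-1}\Big)\,n^2 \;-\; O(n).\]
Finally, I would compare this with the right-hand side of the desired equality: since $cn>n/(s-1)$, Tur\'{a}n's theorem in the form~\eqref{eq:Turan-edges} gives $\RT{s}{cn}{n} = \frac{1}{2}\bigl(1-1/(s-1)\bigr)n^2 + O(n)$, which matches the bound on $|P|$ up to an additive $O(n)$ term. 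Dividing yields $|P|\geq (1-O(1/n))\,\RT{s}{cn}{n}$, as required.

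There is no substantive obstacle here; the argument is essentially a bookkeeping exercise that consists of feeding $K_{s-1}$ into \cref{thm:PropPurpleBlowUp}, since the red/purple graph of the blow-up is precisely the Tur\'{a}n graph $T_{n,s-1}$ and so automatically attains the Tur\'{a}n bound up to $O(n)$.
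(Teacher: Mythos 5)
Your proposal is correct and follows essentially the same route as the paper: take the $n$-blow-up colouring of $K_{s-1}$, note that $R\cup P$ is the Tur\'{a}n graph $T_{n,s-1}$, invoke \cref{thm:PropPurpleBlowUp} to count purple edges, and compare with the Tur\'{a}n count in~\eqref{eq:Turan-edges}. Your write-up is slightly more explicit about verifying $(s,cn)$-freeness via parts~\ref{item:RPClique} and~\ref{item:BPClique} of \cref{thm:PropPurpleBlowUp}, but the argument is the same.
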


\begin{proof}
Let $\RBP$  be an $n$-blow-up colouring of $K_{s-1}$ for some large enough~$n$. The graph $R\cup P$ is the Tur\'an graph $T_{n,s-1}$ which is $K_s$-free and has independence number $n/(s-1)< cn$. By~\cref{thm:PropPurpleBlowUp} we obtain  
\[|P|= \binom{s-1}{2}\parens*{\frac{n^2}{(s-1)^2}-O\Big(\frac{n}{s-1}\Big)} =\frac12\Big(1-\frac{1}{s-1}\Big)n^2+O(n),\]
and~\cref{eq:Turan-edges} proves the claim. 
\end{proof} 

The interesting range for Ramsey-Tur\'an numbers is therefore when $t\le 1/(s-1)$. Let us focus on triangles first, that is when $s=3$. Note that the neighbourhood of any vertex in a triangle-free graph is an independent set, therefore for all integers $t\leq n$ we have the trivial bound 
\begin{equation}\label{eq:trivial_bound}
    \RT{3}{t}{n} \leq \frac{n(t-1)}{2}.
\end{equation}

This upper bound is tight if there exists a triangle-free $(t-1)$-regular graph on $n$ vertices with independence number equal to $t-1$. Let $\Omega$ to be the set of all pairs of natural numbers $(d,k)$ such that there exists a triangle-free, $d$-regular graph on~$k$ vertices with independence number $d$. 
Observe that if $(d,k)\in\Omega$ then $(ad,ak)\in \Omega$ for any positive integer $a$, since an equitable blow-up of a graph certifying $(d,k)\in\Omega$ is an $ad$-regular graph on~$ak$ vertices with independence number $ad$.
Therefore, the following theorem by Brandt covers the range of $t=cn$  for $0<c< 1/3$ asymptotically .
\begin{theorem}[Brandt~\cite{brandt2010triangle}]\label{thm:RTBrandt}
The set $\set*{\frac{d}{k}\colon (d,k)\in\Omega}\cap\parens*{0,\frac13}$ is dense in $\parens*{0,\frac13}$.
\end{theorem}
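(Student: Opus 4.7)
The plan is to prove density by exhibiting, for every target $c \in (0, \tfrac{1}{3})$ and every $\varepsilon > 0$, an explicit pair $(d, k) \in \Omega$ with $|d/k - c| < \varepsilon$. Since $\Omega$ is closed under equitable blow-ups (which preserve the ratio $d/k$), it suffices to produce a sufficiently flexible family of triangle-free regular graphs $G$ of degree $d$ on $k$ vertices whose independence number equals $d$, with the ratios $d/k$ ranging densely over $(0, \tfrac{1}{3})$.

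A natural starting point is the family of Andr\'asfai graphs $\mathrm{And}_j = \mathrm{Cay}(\mathbb{Z}_{3j-1}, \{j, j+1, \ldots, 2j-1\})$, which are $j$-regular, triangle-free, and vertex-transitive with $\alpha(\mathrm{And}_j) = j$; they give pairs $(j, 3j-1) \in \Omega$ whose ratios cluster at $\tfrac{1}{3}$ from \emph{above}. To reach ratios below $\tfrac{1}{3}$, I would look for generalisations among Cayley graphs $G = \mathrm{Cay}(\mathbb{Z}_k, S)$ with a symmetric set $S \subseteq \mathbb{Z}_k \setminus \{0\}$; any such graph is automatically $|S|$-regular and vertex-transitive, and it is triangle-free exactly when $(S+S) \cap S = \emptyset \pmod{k}$. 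For each rational $p/q \in (0, \tfrac{1}{3})$ I would aim to take $k$ a multiple of $q$, $d = (p/q)k$, together with a carefully designed $S \subseteq \mathbb{Z}_k$ of size $d$ so that the resulting graph lies in $\Omega$. A naive ``middle-interval'' construction does not suffice --- for instance, $\mathrm{Cay}(\mathbb{Z}_{10}, \{4, 5, 6\})$ is $3$-regular and triangle-free but has an independent set of size $4$, so it fails the $\alpha = d$ requirement --- so $S$ must be shaped more cleverly, perhaps as a union of translates of an Andr\'asfai-type pattern, or via a product or ``stacking'' of smaller known members of $\Omega$.

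The core of the argument, and where I expect the main obstacle to lie, is verifying $\alpha(G) = d$. The lower bound $\alpha(G) \geq d$ is routine: an arithmetic progression in $\mathbb{Z}_k$ whose common difference is chosen to avoid $S$ yields an independent set of size $d$. The matching upper bound requires ruling out unexpectedly large independent sets and forces a precise exploitation of the arithmetic structure of $S$. Because $G$ is vertex-transitive, the condition $\alpha(G) = d$ is equivalent to the fractional chromatic number satisfying $\chi_f(G) = k/d$, so any such construction is rigidly constrained to be fractionally $(k/d)$-chromatic and no less --- reflecting the ``almost $(k/d)$-partite'' structure of dense triangle-free graphs in the spirit of Andr\'asfai--Erd\H{o}s--S\'os. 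The hard part will be producing such constructions uniformly enough to hit a dense subset of target ratios in $(0, \tfrac{1}{3})$, rather than just the discrete sequence $\{j/(3j-1)\}$ supplied by the Andr\'asfai graphs themselves.
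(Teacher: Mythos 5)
The theorem you are asked to prove is not proved in the paper at all --- it is quoted as a black box from Brandt's work~\cite{brandt2010triangle}, so there is no in-paper argument to compare against. That said, your proposal as written has a genuine and explicitly acknowledged gap, and the gap is precisely the content of the theorem.

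You correctly set up the problem: $\Omega$ is closed under blow-ups, the Andr\'asfai graphs give pairs with $d/k = j/(3j-1) > 1/3$ (so they lie \emph{outside} the target interval $(0,\tfrac13)$ and only approach $\tfrac13$ from above), and therefore one needs entirely new triangle-free $d$-regular graphs with $\alpha = d$ whose ratios $d/k$ land below $\tfrac13$ and are dense there. You also correctly identify that the delicate constraint is $\alpha(G) \le d$ rather than triangle-freeness or regularity, and you even give a clean counterexample ($\mathrm{Cay}(\mathbb{Z}_{10},\{4,5,6\})$) showing that the naive interval Cayley construction fails. But that is where the proposal stops: you say the connection set ``must be shaped more cleverly, perhaps as a union of translates \ldots\ or via a product or `stacking','' and you concede that ``the hard part will be producing such constructions.'' No construction is exhibited, and no mechanism is given for controlling $\alpha$ in a family whose ratios sweep out a dense set in $(0,\tfrac13)$. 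A proof that consists of identifying the difficulty and describing what a solution would have to look like is not a proof. The entire content of Brandt's theorem is exactly the step you leave open; without an explicit family (and a verifiable argument bounding independence numbers uniformly across it), the claim of density is unsupported. To close the gap you would need at minimum: (i) an explicit infinite family $\{G_i\}$ of triangle-free $d_i$-regular graphs on $k_i$ vertices, (ii) a proof (not just a heuristic) that $\alpha(G_i) = d_i$ for each, and (iii) a demonstration that $\{d_i/k_i\}$ is dense in $(0,\tfrac13)$. Your sketch supplies none of (i)--(iii). Note also that your reduction to fractional chromatic number via vertex-transitivity, while correct, does not make the problem easier --- showing $\chi_f(G)= k/d$ for a candidate Cayley graph is exactly as hard as showing $\alpha(G)=d$, so that reformulation does not constitute progress.
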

It follows from~\cref{thm:RTBrandt} that~\cref{eq:trivial_bound} is asymptotically tight for any positive real number $c<1/3$, that is $\RT{3}{cn}{n}=(c+o(1))n^2/2$. Somewhat surprisingly, it turns out that this is not true in general for the remaining range of $c$. To discuss this case, we need to introduce the following class of graphs. For $k\geq 2$, let $\Gamma_k$ denote the Cayley graph over $\mathbb{Z}/(3k-1)\mathbb{Z}$  generated by the sum-free set $\set{k,k+1\ldots,2k-1}$.  That is, for every $k\ge 2$, the vertex set of $\Gamma_k$ is $\mathbb{Z}/(3k-1)\mathbb{Z}$ and its edge set is 
$$E(\Gamma_k) = \left\{ij \in \binom{V(\Gamma_k)}{2}\, : \, i-j \in \{k,k+1\ldots,2k-1\}\right\}.$$
The graphs $\Gamma_k$ are now called Andr\'asfai graphs, see~\cref{fig:AndrasfaiGraphs} for some small examples. 
\def\sizeAndrasfai{1} %controls the size of each graph
\def\col{teal} %control the color of edges
\begin{figure}[H]
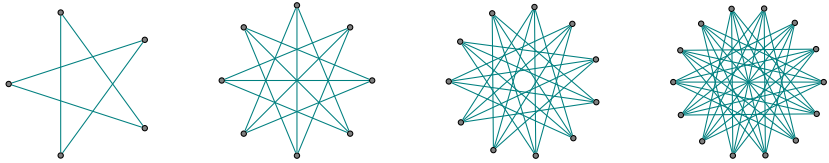

    \centering
    \subfloat{{\AndrasfaiGraph{\sizeAndrasfai}{2}}}%size,k
    \quad    
    \subfloat{{\AndrasfaiGraph{\sizeAndrasfai}{3}}}%size,k
    \quad
    \subfloat{{\AndrasfaiGraph{\sizeAndrasfai}{4}}}%size,k
    \quad
    \subfloat{{\AndrasfaiGraph{\sizeAndrasfai}{5}}}%size,k    
    \caption{ Andr\'asfai graphs $\Gamma_2$, $\Gamma_3$, $\Gamma_4$, and $\Gamma_5$.}%
    \label{fig:AndrasfaiGraphs}%    
\end{figure}

Importantly, Andr\'asfai graphs are triangle-free $k$-regular graphs on $3k-1$ vertices with independence number $\alpha(\Gamma_k)=k$. We refer to~\cite{luczak2022ramsey, luczak2025next} for more background on Andr\'asfai graphs. 

For $kn/(3k-1)\le t < (k-1)n/(3k-4)$, we define, following~\cite{luczak2025next}, the ``canonical'' blow-up $\Gamma(n; k,t)$ of $\Gamma_k$ by replacing vertices $1$, $k$, $2k$ in $\Gamma_k$ with sets of $(k-1)n-(3k-4)t$ vertices each, replacing the remaining $3k-4$ vertices in $\Gamma_k$ with sets of $3t-n$ vertices, and replacing every edge $uv$ of $\Gamma_k$ by a complete bipartite graph. See, for example, Figure~1.2 in~\cite{luczak2025next} for an illustration. It is not hard to see that~$\Gamma(n;k,t)$ is a blow-up of $\Gamma_k$ on~$n$ vertices such that $\alpha(\Gamma(n;k,t))=t$ and
\begin{equation}\label{eq:SizeCanonicalBlowUp}
e(\Gamma(n;k,t))=\frac{1}{2} k(k-1)n^2- k(3k-4)tn+\frac{1}{2}(3k-4)(3k-1)t^2. 
\end{equation}

It can also be shown that the canonical blow-up $\Gamma(n;k,t)$ maximises the number of edges among all $n$-vertex blow-ups of $\Gamma_k$ that have independence number at most $t$, see for example Section 3 in~\cite{luczak2021andrasfai} for a detailed discussion of the properties of $\Gamma(n;k,t)$.  % ~\cite[Lemma 3.3] {luczak2021andrasfai}. 
Andr\'asfai~\cite{andrasfai1962extremalproblem} proved that for $2n/5 \le t \le n/2 $ the graph $\Gamma(n;2,s)$ maximises the number of edges of any triangle-free graph with independence number at most $t$, that is $\RT{3}{t+1}{n} = e(\Gamma(n;2,t))$ for such $t$. We remark here that Ramsey-Tur\'{a}n numbers are defined inconsistently in the literature (using $\alpha\le t$ or $\alpha< t$), hence the appearance of a ``$+1$'' is not present in~\cite{andrasfai1962extremalproblem}, for example. Andr\'asfai then conjectured that the graphs~$\Gamma_k$ yield extremal graphs for the Ramsey-Tur\'an numbers $\RT{3}{t}{n}$ for all $ n/3 < t \le n/2$. 
\begin{conjecture}[Andr\'asfai~\cite{andrasfai1962extremalproblem}]\label{conj:Andrasfai}
    For all integers~$t$ and~$n$ we have 
    $\RT{3}{t+1}{n} = e(\Gamma(n;k,t))$, where~$k$ is the unique integer such that $kn/(3k-1)\le t< (k-1)n/(3k-4)$.
\end{conjecture}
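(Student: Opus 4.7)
The plan is to establish the Andr\'asfai Conjecture by reducing the extremal problem to a finite-dimensional optimization over blow-ups of Andr\'asfai graphs. The lower bound $\RT{3}{t+1}{n}\geq e(\Gamma(n;k,t))$ is immediate from the construction: $\Gamma(n;k,t)$ is a blow-up of the triangle-free graph $\Gamma_k$, it has independence number exactly $t$ by design, and its edge count is given by~\cref{eq:SizeCanonicalBlowUp}. So the real work lies in the matching upper bound.

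For the upper bound, I would let $G$ be an edge-maximum $n$-vertex triangle-free graph with $\alpha(G)\leq t$ and argue that $G$ must itself be a blow-up of some Andr\'asfai graph $\Gamma_\ell$ with $\ell\leq k$. First, an edge-maximum $G$ has essentially high minimum degree: if a vertex $v$ had low degree, one could delete $v$ and reinsert it with its neighbourhood enlarged to sit inside an independent set witnessing $\alpha(G)\leq t$, without creating a triangle, contradicting extremality. Once a lower bound of the form $\delta(G)\geq kn/(3k-1)-o(n)$ is secured for the relevant range of $t$, I would invoke stability-type theorems in the spirit of Andr\'asfai-Erd\H{o}s-S\'os, Chen-Jin-Koh, and the recent {\L}uczak-Polcyn-Reiher work to conclude that $G$ admits a homomorphism into $\Gamma_\ell$ for some $\ell\leq k$, i.e., $G$ is a blow-up of $\Gamma_\ell$.

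Having reduced to the family of weighted blow-ups of $\Gamma_\ell$ with $\ell\leq k$, total weight $n$, and independence number at most $t$, the problem becomes finite-dimensional. Using vertex-transitivity of $\Gamma_\ell$ together with a convexity/Lagrange-multiplier analysis on the part sizes, one shows that the maximum is attained by the canonical blow-up $\Gamma(n;\ell,t)$, and then verifies by direct comparison of the closed-form expressions in~\cref{eq:SizeCanonicalBlowUp} that $e(\Gamma(n;\ell,t))\leq e(\Gamma(n;k,t))$ for every $\ell\leq k$ in the prescribed range $kn/(3k-1)\leq t<(k-1)n/(3k-4)$. The key monotonicity here reflects the fact that larger~$k$ provides finer partitions that accommodate the independence constraint more efficiently.

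The main obstacle is the homomorphism step. The $k=2$ case is essentially the Andr\'asfai-Erd\H{o}s-S\'os theorem, and the $k=3$ case required the delicate arguments of {\L}uczak-Polcyn-Reiher~\cite{luczak2022ramsey,luczak2025next}. Extending to all $k$ uniformly appears genuinely hard, and likely requires either a new structural theorem for triangle-free graphs with minimum degree just above the $k$-th Andr\'asfai threshold, or an induction on $k$ in which one shows that near-extremal graphs failing to be $\Gamma_{k-1}$-homomorphic nevertheless carry enough local symmetry to be forced into a $\Gamma_k$-blow-up. Overcoming this structural bottleneck is precisely what a full resolution of the Andr\'asfai Conjecture would demand, and it is what the recent announcement of {\L}uczak, Polcyn, and Reiher purports to achieve.
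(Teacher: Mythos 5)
This statement is a \emph{conjecture} in the paper, not a theorem: the paper does not prove it, and indeed explicitly records that it remains open in general, with the cases $k\in\{2,3,4\}$ established by Andr\'asfai and by {\L}uczak, Polcyn and Reiher, and a proof of the full conjecture only \emph{announced} in~\cite{luczak2024strong}. So there is no in-paper proof to compare against.

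Your proposal is a sensible high-level outline of how the conjecture might be approached, and it correctly isolates the two halves (the easy lower bound via the canonical blow-up $\Gamma(n;k,t)$, and the hard upper bound via a structural classification of extremal triangle-free graphs). But it is not a proof. The crucial step --- that an edge-maximum triangle-free graph with $\alpha\le t$ in the relevant range must be (homomorphic to, hence a blow-up of) some $\Gamma_\ell$ with $\ell\le k$ --- is asserted by appeal to ``stability-type theorems in the spirit of Andr\'asfai--Erd\H{o}s--S\'os, Chen--Jin--Koh, and {\L}uczak--Polcyn--Reiher,'' and you yourself acknowledge in the final paragraph that extending this to all $k$ ``appears genuinely hard'' and is ``precisely what a full resolution of the Andr\'asfai Conjecture would demand.'' That is exactly right, and it means the central step of your argument is an open problem rather than a lemma you can invoke. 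The min-degree heuristic you give (deleting a low-degree vertex and re-attaching it to an independent set without creating triangles) is also not obviously valid: re-attaching to an arbitrary independent set of size $\ge n-2t$ need not avoid triangles with the rest of $G$, and securing $\delta(G)\gtrsim kn/(3k-1)$ is itself one of the hard parts of the {\L}uczak--Polcyn--Reiher program. In short, the proposal is a fair road map but contains a genuine gap at its core, which is consistent with the statement's status as a conjecture in the paper.
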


As mentioned above, the conjecture was proved for $k=2$ by Andr\'asfai~\cite{andrasfai1962extremalproblem}. After being dormant for about 50 years, the conjecture was recently attacked in a series of papers by {\L}uczak, Polcyn, and Reiher. They proved the conjecture for $k=3$ in~\cite{luczak2022ramsey} and for $k=4$ in~\cite{luczak2025next}. 
Thus, we may assume that 
\begin{align}  
 &\RT{3}{t+1}{n} = e(\Gamma(n;k,t)) \  \text{ for all } \ 4n/11 \le t \le n/2, 
\end{align}
where $k\in\{2,3,4\}$ is the unique integer satisfying $kn/(3k-1)\le t\le (k-1)n/(3k-4)$. 
{\L}uczak, Polcyn and Reiher~\cite{luczak2021andrasfai} also proved the conjecture for all integers~$k$ and all~$t$ which are sufficiently close to the critical values $kn/(3k-1)$. Even more excitingly, they recently announced a proof for all~$k$ and all admissible~$t$ and~$n$ in~\cite{luczak2024strong}.

For general $s\geq 4$, as mentioned in introduction, the work of Erd\H{o}s and S\'os~\cite{erdos1970someremarks}, Szemer{\'e}di~\cite{szemeredi1972graphs}, Bollob{\'a}s and Erd\H{o}s~\cite{bollobas1976ramsey}, Erd\H{o}s, Hajnal, S\'os, and Szemer{\'e}di~\cite{erdos1983more}, and Fox, Loh, and Zhao~\cite{fox2015critical}, culminated in the following theorem due to L{\"u}ders and Reiher~\cite{luders2019ramsey}.

\begin{theorem}[L{\"u}ders and Reiher~\cite{luders2019ramsey}]\label{thm:LudersReiher}
For all $\ell\geq 2$, and $c=o(1/\ell)$,     
\begin{align*}
\RT{2\ell+1}{cn}{n}&= \Big(\frac{\ell-1}{\ell}+c+o(1)\Big)\frac{n^2}{2},\\
\RT{2\ell}{cn}{n}&= \Big(\frac{3\ell-5}{3\ell-2}+c-c^2+o(1)\Big)\frac{n^2}{2}.    
\end{align*}
\end{theorem}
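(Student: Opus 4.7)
The plan is to prove both bounds, with the lower bound following from explicit blow-up constructions and the upper bound requiring a regularity-based stability argument.

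For the lower bound in the odd case $s=2\ell+1$, I would take the Tur\'an graph $T_{n,\ell}$ with parts $V_1,\dots,V_\ell$ of size $\lfloor n/\ell\rfloor$ or $\lceil n/\ell\rceil$; this contributes $(1-1/\ell)n^2/2+O(n)$ edges. Inside each $V_i$, embed a triangle-free graph $G_i$ with independence number at most $cn$ and as many edges as possible. Since $c=o(1/\ell)$ we have $cn<n/\ell$, so~\cref{thm:RTBrandt} (Brandt) yields graphs $G_i$ with $\sim (n/\ell)\cdot cn/2$ edges, giving an additional $cn^2/2$ edges across the $\ell$ parts. The resulting graph is $K_{2\ell+1}$-free: any such copy would by pigeonhole place three vertices inside some $V_i$, forcing a triangle in $G_i$. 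Its independence number is at most $\max_i \alpha(G_i)\le cn$ because distinct parts are completely joined, so any independent set lies within a single part.

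For the lower bound in the even case $s=2\ell$, the leading fraction $(3\ell-5)/(3\ell-2)$ matches the generalised Bollob\'as--Erd\H{o}s construction of Erd\H{o}s, Hajnal, S\'os and Szemer\'edi~\cite{erdos1983more}. I would use such a ``skeleton'' as the starting point and again add dense triangle-free structures inside each vertex class. The subtlety here is that maintaining $K_{2\ell}$-freeness while adding these internal edges forces the removal of roughly $c^2 n^2/2$ skeleton edges between affected classes, which precisely accounts for the $c-c^2$ correction (as opposed to a bare $+c$ term in the odd case).

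For the upper bound, apply Szemer\'edi's Regularity Lemma to a $K_s$-free graph~$G$ on~$n$ vertices with $\alpha(G)<cn$, and consider the reduced cluster graph~$R$. A stability argument shows that~$R$ must be close in edit distance to the extremal skeleton --- complete $\ell$-partite in the odd case and a blow-up of the Bollob\'as--Erd\H{o}s skeleton in the even case --- since otherwise a deletion-plus-supersaturation step already yields strictly fewer edges than claimed. Within each dense regular pair of~$G$, combining small independence with $K_s$-freeness forces the bipartite graph to be locally $K_{\ell+1}$-free (or triangle-free at the ``exceptional'' position where the internal edges live). Summing over cluster pairs while carefully tracking the $c$-dependence, one recovers the matching upper bound.

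The main obstacle is the stability step in the even case. The Bollob\'as--Erd\H{o}s graph is geometrically rigid but admits many near-extremal perturbations, and ruling out alternative configurations requires a delicate coupling of the regularity structure on~$R$ with the sphere-type geometry underpinning the extremal graph --- this is exactly where the $-c^2$ correction is pinned down. In contrast, the odd case reduces, via the independence bound, to showing that a single part in the reduced graph is triangle-free, after which an Erd\H{o}s--S\'os-style counting argument closes the gap cleanly.
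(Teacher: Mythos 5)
The statement you are asked to prove is not proved in the paper at all: it is Theorem~\ref{thm:LudersReiher}, which the paper cites verbatim from L\"uders and Reiher~\cite{luders2019ramsey} and uses as a black box (for example in deducing \cref{thm:GeneralSmallLinear} from \cref{thm:LinearOdd,thm:LinearEven}). There is therefore no internal proof to compare your attempt against; the paper simply imports the result.

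On the merits of your sketch itself: the lower bound in the odd case $s=2\ell+1$ is correct and standard — a complete $\ell$-partite skeleton with near-extremal triangle-free graphs inserted inside each part, independence number controlled because any independent set lies inside a single class. The even-case lower bound is more delicate than your description suggests: the $-c^2$ term does not come from ``removing roughly $c^2n^2/2$ skeleton edges'' to preserve $K_{2\ell}$-freeness, but from the edge count of the Bollob\'as--Erd\H{o}s-type graph one places in a single distinguished class. This is visible in \cref{thm:ExistenceGraphFox} (Fox, Loh, Zhao), where the $K_4$-free piece has $\tfrac12\bigl(\tfrac14+a-a^2-o(1)\bigr)k^2$ edges with $\alpha<ak$; the $a-a^2$ is, after rescaling $a\sim c$, precisely the source of $c-c^2$. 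Your upper-bound outline (regularity, reduced graph, stability towards the skeleton) is the right general strategy, but as written it is a plan rather than a proof: the stability step in the even case — pinning down the Bollob\'as--Erd\H{o}s geometry and the exact second-order coefficient — is the entire technical content of~\cite{luders2019ramsey} and would need several pages of argument that your sketch does not supply.
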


\section{The triangle case with linear independence number}\label{sec:TriangleLinearSmall}

We start our analysis with a focus on the case when $s=3$ and $t$ is linear in~$n$, hence proving~\cref{thm:TriangleCase}\ref{item:TriangleLinearCase}. Recall that \cref{thm:RTBrandt} implies that $\RT{3}{cn}{n}=(c+o(1))n^2/2$ for  $c<1/3$. We first show that $g(n;3,cn)$ is also asymptotically equal to $cn^2/2$ for $c<1/3$. Note that $\RT{3}{cn}{n}\le cn^2/2$ is the trivial bound in~\cref{eq:trivial_bound}. 

\begin{theorem}\label{thm:TriangleLinearSmall}
    For every constant $0<c\leq 1/3$, we have
    \[g(n;3,cn) \geq \frac{n^2}{2}(c-o(1)).\] 
\end{theorem}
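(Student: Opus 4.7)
The plan is to use the blow-up framework from \cref{def:PurpleBlowUp} with a base graph provided by Brandt's theorem. Given $c\in(0,1/3]$, I would fix an arbitrary $\varepsilon>0$ and appeal to \cref{thm:RTBrandt}---or, in the boundary case $c=1/3$, simply to the Andr\'asfai graphs $\Gamma_k$ with $k/(3k-1)\to 1/3$---to produce a pair $(d,k)\in\Omega$ satisfying $c-\varepsilon<d/k<c$. Let $G$ be a triangle-free $d$-regular graph on $k$ vertices with $\alpha(G)=d$ realising this pair, so that $e(G)=dk/2$.

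I would then take any $n$-blow-up colouring \RBP{} of $G$ and invoke \cref{thm:PropPurpleBlowUp} to check that it is $(3,cn)$-free and has many purple edges. By \cref{thm:PropPurpleBlowUp}\ref{item:RPClique}, $\omega(R\cup P)=\omega(G)=2$, so $R\cup P$ is triangle-free. By \cref{thm:PropPurpleBlowUp}\ref{item:BPClique}, $\omega(B\cup P)=\alpha(R)\le\ceil*{n/k}\alpha(G)\le nd/k+d$, which is strictly less than $cn$ for all sufficiently large $n$, since $d/k<c$ and both $d$ and $k$ are fixed. Finally, the lower bound in \cref{thm:PropPurpleBlowUp}\ref{item:SizeP} gives
\[
|P|\;\ge\; e(G)\frac{n^2}{k^2}-3e(G)\frac{n}{k}\;=\;\frac{d}{2k}\,n^2-\frac{3d}{2}\,n\;\ge\;(c-\varepsilon)\frac{n^2}{2}-O(n).
\]

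For each fixed $\varepsilon>0$ and every sufficiently large $n$, this delivers $g(n;3,cn)\ge(c-2\varepsilon)n^2/2$, and since $\varepsilon$ was arbitrary we conclude $g(n;3,cn)\ge(c-o(1))n^2/2$, as required. I do not foresee a serious obstacle in this argument: the only technical point is to select $d/k$ strictly below $c$ (rather than equal to $c$) so that the independence constraint $\omega(B\cup P)<cn$ becomes strict once $n$ is large enough, and the density statement in \cref{thm:RTBrandt} is exactly what makes such a choice possible throughout the range $c\in(0,1/3]$.
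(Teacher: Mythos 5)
Your proposal is correct and follows essentially the same route as the paper's proof: both use Brandt's density theorem (\cref{thm:RTBrandt}) to pick $(d,k)\in\Omega$ with $c-\varepsilon<d/k<c$, take an $n$-blow-up colouring of the certificate graph, and apply \cref{thm:PropPurpleBlowUp} to verify the $(3,cn)$-freeness and count purple edges. The only cosmetic difference is your remark about falling back on Andr\'asfai graphs at $c=1/3$; this is not actually needed, since Brandt's density statement already yields $d/k$ arbitrarily close to (and strictly below) $1/3$, but it does no harm.
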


\begin{proof}
     Fix $\eta>0$. Let $d,k$ be integers such that $c-\eta< \frac{d}{k}< c$, and such that there exists a triangle-free $d$-regular graph $G$ on~$k$ vertices with independence number $d$, as guaranteed by~\cref{thm:RTBrandt}. For~$n$ large enough, let $\RBP$ be an $n$-blow-up colouring of $G$. It follows from~\cref{thm:PropPurpleBlowUp} that $R\cup P$ is a triangle-free graph and, since~$n$ is large enough, $R$ has independence number at most
     \[
        \ceil*{\frac{n}{k}}\alpha(G)
        \leq \Big(\frac{d}{k}+\frac{d}{n}\Big)n
        < cn.
     \]
     We additionally obtain from~\cref{thm:PropPurpleBlowUp} that $\abs*{\size{P}-e(G)n^2/k^2}\leq 3e(G)n/k$, and thus, for~$n$ large enough, 
     \begin{align*}
         \size{P}\geq e(G)\Big(\frac{n^2}{k^2}-\frac{3n}{k}\Big) = \frac{n^2}{2}\Big(\frac{d}{k}-\frac{3d}{n}\Big) > \frac{n^2}{2}(c-\eta).
     \end{align*}
     Since $\eta>0$ can be chosen arbitrarily small, we obtain the desired result.
\end{proof}

Recall that \cref{conj:Andrasfai} asserts that for every integer $t\in(n/3,n/2)$, canonical blow-ups of Andr{\'a}sfai graphs are extremal for $\RT{s}{t}{n}$. We now prove that these graphs yield a lower bound for $g(n;3,t)$ up to some linear error term.

\begin{theorem}\label{thm:TriangleIntermdC}
For every $c\in\parens*{\frac13,\frac12}$ and positive integer $n$, we have
    \[g(n;3,t+1) \geq e(\Gamma(n;k,t))-O(n),\]
    where $t=\ceil{cn}$ and~$k$ is the unique integer such that $kn/(3k-1)\le t< (k-1)n/(3k-4)$.    
\end{theorem}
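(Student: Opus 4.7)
The plan is to extend the blow-up colouring of~\cref{def:PurpleBlowUp} to non-equitable blow-ups and apply it to the canonical extremal graph $\Gamma(n;k,t)$, so that only $O(n)$ of its edges need to be declared red and the remaining $e(\Gamma(n;k,t))-O(n)$ edges stay purple. Set $n_{\min}=(k-1)n-(3k-4)t$ and $n_{\max}=3t-n$; both are positive integers by the choice of $k$. I would partition $V(K_n)=\bigsqcup_{i\in V(\Gamma_k)}V_i$ with $|V_i|=n_{\min}$ for $i\in\{1,k,2k\}$ and $|V_i|=n_{\max}$ otherwise, so that placing a complete bipartite graph between every pair $V_i,V_j$ with $ij\in E(\Gamma_k)$ produces exactly $\Gamma(n;k,t)$. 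After enumerating each $V_i=\{u_1^i,\ldots,u_{|V_i|}^i\}$, colour $u_\ell^i u_\ell^j$ red for every $ij\in E(\Gamma_k)$ and every $\ell\leq\min(|V_i|,|V_j|)$; colour all remaining edges of $\Gamma(n;k,t)$ purple; and colour $K_n\setminus\Gamma(n;k,t)$ blue.

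Two of the three required properties are immediate: $R\cup P=\Gamma(n;k,t)$ is a blow-up of the triangle-free graph $\Gamma_k$, hence triangle-free; and $|R|\leq e(\Gamma_k)\,n_{\max}=O(n)$ (for fixed $k$), giving $|P|\geq e(\Gamma(n;k,t))-O(n)$. The substantive step is to verify $\alpha(R)\leq t$, equivalently $\omega(B\cup P)\leq t$. For each $1\leq\ell\leq n_{\max}$ define the layer $L_\ell=\{u_\ell^i:|V_i|\geq\ell\}$. By construction $R$ has no edges between distinct layers, and inside $L_\ell$ its edges form the induced subgraph $\Gamma_k[\{i\in V(\Gamma_k):|V_i|\geq\ell\}]$. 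Hence any $R$-independent set $S$ satisfies
\[|S|\leq n_{\min}\cdot\alpha(\Gamma_k)+(n_{\max}-n_{\min})\cdot\alpha(\Gamma_k\setminus\{1,k,2k\}).\]
Provided $\alpha(\Gamma_k\setminus\{1,k,2k\})=k-1$, the right-hand side simplifies to $n_{\min}+(k-1)n_{\max}=(k-1)n-(3k-4)t+(k-1)(3t-n)=t$, closing this step.

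The crux of the argument is therefore the structural identity $\alpha(\Gamma_k\setminus\{1,k,2k\})=k-1$, and this is the step I expect to be the main technical point. The inequality $\geq k-1$ is witnessed by $\{0,2,3,\ldots,k-1\}$, whose pairwise differences lie in $\{1,\ldots,k-1\}$ modulo $3k-1$ and therefore avoid the generating set $\{k,\ldots,2k-1\}$. For the matching upper bound I would invoke the known classification of the maximum independent sets of $\Gamma_k$ as exactly the $k$-intervals $\{i,i+1,\ldots,i+k-1\}$ in $\mathbb{Z}/(3k-1)\mathbb{Z}$; a short residue computation identifies the $k$-intervals containing $1$, $k$, $2k$ as those with starting points in $\{0,1,2k+1,\ldots,3k-2\}$, $\{1,\ldots,k\}$, and $\{k+1,\ldots,2k\}$ respectively, whose union is all of $\mathbb{Z}/(3k-1)\mathbb{Z}$. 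Thus every $k$-interval meets $\{1,k,2k\}$, giving $\alpha(\Gamma_k\setminus\{1,k,2k\})\leq k-1$ and completing the proof sketch.
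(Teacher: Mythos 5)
Your proof is correct and follows essentially the same route as the paper: you build the same colouring from the canonical blow-up $\Gamma(n;k,t)$, decompose the red graph into layers isomorphic to $\Gamma_k$ (the first $n_{\min}$) or to $\Gamma_k\setminus\{1,k,2k\}$ (the remaining $n_{\max}-n_{\min}$), and conclude via the structural fact that every maximum independent set of $\Gamma_k$ meets $\{1,k,2k\}$, arriving at the same count $n_{\min}+(k-1)n_{\max}=t$. The only difference is presentational: where the paper simply asserts "it is not hard to see" that size-$k$ independent sets of $\Gamma_k$ meet $\{1,k,2k\}$, you spell out the classification of maximum independent sets as $k$-intervals and verify the covering; this is a fair level of detail for the one non-routine step.
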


\begin{proof}
Fix $c\in(1/3,1/2)$, and let $k\geq 2$ be the unique integer such that $k/(3k-1)\leq c < (k-1)/(3k-4)$. Let~$n$ be large enough, in particular such that $t:=\ceil{cn}$ satisfies $kn/(3k-1)\leq t < (k-1)n/(3k-4)$. Let $\Gamma(n;k,t)$ be the canonical blow-up of the Andr{\'a}sfai graph $\Gamma_k$ defined in~\cref{sec:RT_Theory}. Label the vertices of $\Gamma(n;k,t)$ as 
\begin{align*}
V(\Gamma(n;k,t)) = 
&\big\{\{u_1^i,\ldots,u_{3t-n}^i\}\colon i\in[3k-1]\setminus\set{1,k,2k}\big\}\\
\cup&
\big\{\{u_1^i,\ldots,u_{(k-1)n-(3k-4)t}^i\}\colon i\in\set{1,k,2k}\big\}.    
\end{align*}

We note that the canonical blow-up is composed of $(3k-4)$ ``large'' sets of $(3t-n)$ vertices, and 3 sets of size $(k-1)n-(3k-4)t$.
We define an edge colouring \RBP{} of $K_n$ in an analogous fashion to the blow-up colourings from~\cref{def:PurpleBlowUp}, but using the canonical blow-up $\Gamma(n;k,t)$ instead of an equitable one. For any $i,j\in[3k-1]$ and any $\ell\in[3t-n]$, if $u_i^\ell u_j^\ell$ is an edge of $\Gamma(n;k,t)$, then colour that edge red. Colour all other edges of~$\Gamma(n;k,t)$ in purple. Colour all edges not in $\Gamma(n;k,t)$ in blue. Similarly to blow-up colourings, the red edges form $(3t-n)$ vertex-disjoint copies of either $\Gamma_k$ or a subgraph of $\Gamma_k$, hence $|R|\leq (3t-n)e(\Gamma_k)\leq 3k^2n$.

Observe that $R\cup P$ is a triangle-free graph since $\Gamma_k$ is. We now study the independence number of $R$. The red edges can be decomposed into $(3t-n)$ vertex-disjoint graphs $R_\ell$, $\ell\in[3t-n]$, where $R_\ell$ is a copy of $\Gamma_k$ if $\ell\leq (k-1)n-(3k-4)t$, and a copy of $\Gamma_k\setminus\set{1,k,2k}$ otherwise. It is not hard to see that every independent set of $\Gamma_k$ of size~$k$ contains a vertex from $\set{1,k,2k}$. Therefore $\alpha(R_\ell)\leq k-1$ if $\ell> (k-1)n-(3k-4)t$, and $\alpha(R_\ell)\leq k$ otherwise. It follows that 
\[\alpha(R)\leq \bigcup_{\ell\in[3t-n]} \alpha(R_\ell)\leq (3t-n)\cdot(k-1)+[(k-1)n-(3k-4)t]\cdot 1 = t,\]
and therefore 
\[g(n;3,cn+1) = g(n;3,t+1)\geq |P| = e(\Gamma(n;k,t))-|R| \geq  e(\Gamma(n;k,t))-3k^2n.\]
We obtain the desired result as~$k$ depends only on $c$.
\end{proof}

We are now ready to prove~\cref{thm:TriangleCase}\ref{item:TriangleLinearCase}, that is $g(n;3,cn) = (1-o(1)) \RT{3}{cn}{n}$ when $c\in \left(0,1\right]\setminus \left(\frac{1}{3},\frac{4}{11}\right)$.

\begin{proof}[Proof of~\cref{thm:TriangleCase}\ref{item:TriangleLinearCase}.]
Recall that, for $c\ge 1/2$, the theorem statement is a simple consequence of 
Tur\'{a}n's theorem and the blow-up colouring, see~\cref{thm:LinearLargeC}.
When $c\leq1/3$, \cref{thm:TriangleCase}\ref{item:TriangleLinearCase} follows immediately by combining~\cref{thm:TriangleLinearSmall,eq:upper_bound_RT,thm:RTBrandt}. For the remaining range, let $k\geq 2$ be the unique integer such that $k/(3k-1)\leq c < (k-1)/(3k-4)$. Let~$n$ be large enough, in particular, such that $t:=\ceil{cn}$ satisfies $kn/(3k-1)\leq t<t+1 < (k-1)n/(3k-4)$. We obtain from~\cref{thm:TriangleIntermdC} that, if~$k$ is such that~\cref{conj:Andrasfai} holds we have
\begin{align*}
    e(\Gamma(n;k,t-1))-O(n)\leq g(n;3,cn) \leq \RT{3}{cn}{n} = e(\Gamma(n;k,t-1)).
\end{align*}
If $c\in(4/11,1/2)$, we obtain $k\in\{2,3,4\}$, hence~\cref{conj:Andrasfai} holds following the work of Andr{\'a}sfai~\cite{andrasfai1962extremalproblem} and {\L}uczak, Polcyn, and Reiher~\cite{luczak2022ramsey,luczak2025next}. \Cref{thm:TriangleCase}\ref{item:TriangleLinearCase} now follows since $e(\Gamma(n;k,t-1))=\Theta(n^2)$, see~\cref{eq:SizeCanonicalBlowUp}.  
\end{proof}

\section{The triangle case with sublinear independence number}\label{sec:TriangleSubLinear}

We first prove~\cref{obs:ADMR}, showing that $g(n;3,t)$ is bounded away from $\RT{3}{t}{n}$ when $t$ is a constant factor away from $\sqrt{n\log n}$. The main ingredient of this observation is the fact that every triangle-free graph of average degree $d$ contains an independent set of size at least $n (\log d -1)/d$, which was proved by Shearer~\cite{shearer1983} to obtain the upper bound in~\eqref{eq:bounds-R3t}.

\begin{proof}[Proof of~\cref{obs:ADMR}]
    Let~$n$ be large enough and assume that $g(n;3,t) \geq (1-\delta)\cdot\RT{3}{t}{n}$. Let \RBP{} be a $(3,\ceil{t})$-free colouring of $K_n$ such that $|P|=g(3;n,t)$. Given that $\alpha(R\cup P)\leq \alpha(R)< t$, we have $|R\cup P|\leq \RT{3}{t}{n}$ and it follows that $|R|\leq \delta\cdot \RT{3}{t}{n}$. By~\cref{eq:trivial_bound}, we obtain \(|R| \leq \delta nt/2,\) hence $R$ is a triangle-free graph with average degree at most $\delta t$. Shearer~\cite[Theorem~1]{shearer1983} showed that $R$ then contains an independent set of size at least $n (\log(\delta t) -1)/(\delta t)$. This leads to the desired contradiction since for $t=\gamma\sqrt{n\log n}$ and~$n$ large enough we have
    \[n\; \frac{\log(\delta t) -1}{\delta t}
    =\frac{\sqrt{n}}{\delta\gamma\sqrt{\log n}}
    \Big(\frac{1}{2}\,\log n + \frac{1}{2}\,\log \log n+ \log(\delta\gamma) -1\Big)
    \geq\frac{1}{2\delta\gamma}\sqrt{n\log n}\geq t.\qedhere\]
\end{proof}

The triangle-free process is a key tool in our analysis of $g(n;3,t)$ when $t$ is sublinear. This random graph process is defined iteratively, starting with an empty graph $G_0$ on~$k$ vertices, and adding edges, one by one, chosen uniformly at random from all those that do not close a triangle. Denote by $G_{i}$ the random graph obtained after adding $i$ edges, and by $G_{k,\triangle}$ the final random graph resulting from this process. 

The triangle-free process was introduced by Bollob{\'a}s and Erd\H{o}s (see e.g.~\cite{bollobas2009random}), and Spencer~\cite{spencer1995maximal} conjectured that it should establish $R(3,t) = \Omega(t^2/ \log t)$ by producing a dense-enough triangle-free graph with small independence number. This was proved by Bohman~\cite{bohman2009triangle} in 2009. Four years later, Fiz Pontiveros, Griffiths, and Morris~\cite{fpgm2013}, and independently Bohman and Keevash~\cite{bohman2013dynamic}, used a much more detailed analysis of the triangle-free process to prove \(R(3,t) \geq \parens*{1/4-o(1)}t^2/\log t.\) Both refined analyses of the triangle-free process~\cite{fpgm2013,bohman2013dynamic} established the following.

\begin{theorem}[Bohman and Keevash~\cite{bohman2013dynamic},
Fiz Pontiveros, Griffiths, and Morris~\cite{fpgm2013}]\label{thm:OutputTriangleFreeProcess}
    For every $\varepsilon>0$, there exists $k_0$ such that for all $k\geq k_0$ there exists a triangle-free graph $G=G(k,\varepsilon)$ on~$k$ vertices satisfying  
\[\alpha(G)\leq (1+\varepsilon)\sqrt{2k\log k}, \quad\text{ and } \quad e(G)\geq \frac{1}{2\sqrt{2}}(1-\varepsilon)k^{3/2}\sqrt{\log k}.\]
\end{theorem}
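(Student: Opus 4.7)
The plan is to construct the graph $G(k,\varepsilon)$ via the \emph{triangle-free process}: run from the empty graph on $k$ vertices, at each step add an edge chosen uniformly at random among all \emph{open pairs} --- non-edges whose addition would not close a triangle --- and denote by $G_{k,\triangle}$ the (random) maximal triangle-free graph output. For every $\varepsilon>0$, I would show that with high probability $G_{k,\triangle}$ already satisfies both the required edge lower bound and the required independence upper bound; any deterministic realisation then yields $G(k,\varepsilon)$.

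For the edge count, I would track $q(i) := |Q_i|$, the number of open pairs after step $i$, with $q(0) = \binom{k}{2}$. A one-step computation gives $\mathbb{E}[q(i+1)-q(i)\mid G_i] = -1 - X_i$, where $X_i$ counts open pairs newly closed by the added edge; for typical configurations $X_i \approx 4iq(i)/k^2$. Rescaling as $\tau = i/k^{3/2}$ and $\hat q(\tau) = q(i)/\binom{k}{2}$, the induced ODE admits the solution $\hat q(\tau) \approx e^{-4\tau^2}$, which drops below $1/\binom{k}{2}$ around $\tau^\ast \approx \sqrt{\log k}/(2\sqrt{2})$, predicting a terminal edge count of $\tau^\ast k^{3/2} \approx \frac{1}{2\sqrt{2}}k^{3/2}\sqrt{\log k}$. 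Upgrading this heuristic to a high-probability statement uses the differential-equations / self-correcting-estimate framework of Wormald and Bohman: construct one-sided supermartingales bounding $q(i) - \hat q(i/k^{3/2})\binom{k}{2}$ with an error term, apply Freedman's inequality with a suitable stopping time, and deduce $e(G_{k,\triangle}) \geq \frac{1}{2\sqrt{2}}(1-\varepsilon)k^{3/2}\sqrt{\log k}$ with high probability.

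For the independence bound, for each $s$-subset $S \subseteq V$ with $s := \lceil(1+\varepsilon)\sqrt{2k\log k}\rceil$, I would track $Y_S(i) := |Q_i \cap \binom{S}{2}|$, with $Y_S(0) = \binom{s}{2}$. A one-step analysis yields $\mathbb{E}[Y_S(i+1) - Y_S(i)\mid G_i] \approx -Y_S(i)(1 + \Theta(si/k^2))/q(i)$, and a careful trajectory computation shows that $Y_S$ decays essentially like a power of $\hat q$ with exponent $\Theta(s^2/k)$. The value of $s$ is calibrated precisely so that by time $\tau^\ast$ the predicted $Y_S(i^\ast)$ is below $\binom{k}{s}^{-1}$. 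Concentration of $Y_S$ together with a union bound over the at most $\binom{k}{s} \le (ek/s)^s$ choices of $S$ then forces $Y_S(i^\ast)=0$ for every $S$ with high probability, yielding $\alpha(G_{k,\triangle}) < s$.

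The main obstacle is the sharp concentration in the \emph{critical window} near the end of the process, where $q(i)$ is small, fluctuations become large relative to their mean, and lower-order corrections to the one-step conditional expectations are no longer negligible. Controlling this requires the self-correcting potential functions and careful stopping-time arguments of Bohman, refined independently by Bohman--Keevash and Fiz Pontiveros--Griffiths--Morris; these are precisely what is needed to push both constants to their sharp values --- $1/(2\sqrt{2})$ in the edge lower bound and $\sqrt{2}$ in the independence upper bound --- rather than merely matching orders of magnitude.
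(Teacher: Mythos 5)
The paper does not prove this statement: it is cited directly as the main theorem of Bohman--Keevash and of Fiz Pontiveros--Griffiths--Morris, and is used here purely as a black box (indeed, \cref{thm:FPGM} quotes two intermediate propositions from the same source for the argument in \cref{thm:TriangleAtThresholdPrecise}). So there is no ``paper's own proof'' to compare to. Your proposal is a reasonable high-level account of the strategy behind those two papers, but two of the quantitative heuristics are wrong, and one of them masks exactly the hard part of the argument.

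First, the one-step decrement: adding $uv$ destroys open pairs $\{u,w\}$ with $w\in N(v)$ and $\{v,w\}$ with $w\in N(u)$, and only those still open count, giving roughly $X_i\approx 2\cdot(2i/k)\cdot\hat q=4i\hat q/k\approx 8iq(i)/k^3$, not $4iq(i)/k^2$; the latter does not even produce a $k$-free rescaled ODE, whereas the former yields $\hat q'=-8\tau\hat q$ with solution $e^{-4\tau^2}$ as you state. More importantly, your termination argument is numerically false: at $\tau^\ast=\sqrt{\log k}/(2\sqrt 2)$ one has $\hat q(\tau^\ast)=e^{-(\log k)/2}=k^{-1/2}$, so $q(i^\ast)\approx k^{3/2}/2$, which is nowhere near $1/\binom{k}{2}$. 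If the process genuinely ran until $\hat q\lesssim 1/\binom{k}{2}$, it would last until $\tau\approx\sqrt{\log k}/\sqrt 2$, twice as long and with twice the edges. The whole point of Bohman--Keevash and Fiz Pontiveros--Griffiths--Morris is that the process \emph{does} terminate around $\tau^\ast$, not $2\tau^\ast$: near $\tau^\ast$ the vertex-level counts of open pairs become highly non-uniform (the maximum degree separates from the mean degree), and the process freezes even though the average number of open pairs is still $\Theta(k^{3/2})$. Your heuristic arrives at the right $\tau^\ast$ for the wrong reason and therefore does not explain why the sharp constants $1/(2\sqrt 2)$ and $\sqrt 2$ emerge; that explanation is the substance of the cited papers, and your sketch defers it to the final paragraph without identifying where your own calculation contradicts it.
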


We split the proof of~\cref{thm:TriangleCase}\ref{item:TriangleSublinearCase} into two subcases. First, when $t$ is sufficiently larger than $4\sqrt{n\log n}$, we use the graph produced by~\cref{thm:OutputTriangleFreeProcess} as a black box. Then, for smaller $t$ we slightly modify the triangle-free process to produce a desired colouring. For both subcases, we prove a stronger statement, relating $g(n;3,t)$ to $nt/2$ instead of $\RT{3}{t}{n}$. The results follow immediately by~\cref{eq:trivial_bound}. 
%We start with the simplest case, when $t> 4\sqrt{n\log n}$.

\begin{theorem}\label{thm:sublinear_triangle}
For every $\gamma>4$ there exist $n_0\ge 1$ and $\delta\in(0,1/2)$ such that for all $n\geq n_0$ and $t=\gamma \sqrt{n\log n}$ we have 
\[g(n;3,t)\geq \delta \, \frac{nt}{2}.\]
Furthermore, if $t\gg\sqrt{n\log n}$ then 
\[g(n;3,t)\geq \Big(\frac12-o(1)\Big) \, \frac{nt}{2}.\]
\end{theorem}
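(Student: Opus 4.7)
The plan is to construct an $(3,t)$-free colouring of $K_n$ with many purple edges by applying the $n$-blow-up construction of~\cref{def:PurpleBlowUp} to the triangle-free graph $G=G(k,\varepsilon)$ produced by the triangle-free process, as given in~\cref{thm:OutputTriangleFreeProcess}. By~\cref{thm:PropPurpleBlowUp}, the resulting colouring satisfies $R\cup P$ triangle-free, $\alpha(R)\le \lceil n/k\rceil\,\alpha(G)$, and $|P|\ge e(G)(n^2/k^2-3n/k)$. The task reduces to choosing $k$ so that $\lceil n/k\rceil\,\alpha(G)<t$ while $|P|$ remains a positive fraction of $nt/2$.

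For the ``furthermore'' case $t\gg\sqrt{n\log n}$, I would let $\varepsilon=\varepsilon(n)\to 0$ slowly, and choose $k=k(n)$ to be roughly the integer satisfying $(1+\varepsilon)\,n\,\sqrt{2\log k/k}=(1-\varepsilon)\,t$. Because $t/\sqrt{n\log n}\to\infty$, a direct calculation shows that this $k$ satisfies $k/n\to 0$, so the $3n/k$ correction in~\cref{thm:PropPurpleBlowUp} is negligible relative to $n^2/k^2$, while the bound on $\alpha(R)$ becomes at most $(1+k/n)(1-\varepsilon)\,t<t$ for large $n$. Plugging the edge lower bound from~\cref{thm:OutputTriangleFreeProcess} into $|P|\ge (1-o(1))\,e(G)n^2/k^2$ yields
\[
|P|\ge (1-o(1))\cdot \frac{1-\varepsilon}{2\sqrt{2}}\cdot n^2\sqrt{\log k/k}=(1-o(1))\cdot\frac{(1-\varepsilon)^2}{4(1+\varepsilon)}\,nt,
\]
which tends to $nt/4=(1/2)\cdot nt/2$ as $\varepsilon\to 0$, as desired.

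For the first case $t=\gamma\sqrt{n\log n}$ with $\gamma>4$ fixed, the right choice is $k=\lfloor n/m\rfloor$ for a fixed integer $m\ge 4$, so that $\lceil n/k\rceil\le m+1$. Taking $\varepsilon>0$ small enough that $(m+1)(1+\varepsilon)\sqrt{2/m}<\gamma$ --- for instance $m=4$ requires $\gamma>5/\sqrt{2}\approx 3.54$, satisfied by any $\gamma>4$ --- the same calculation yields $\alpha(R)<t$ for large $n$ and
\[
|P|\ge e(G)\cdot m(m-3)\cdot(1-o(1))\ge \delta\,\frac{nt}{2}
\]
for some constant $\delta=\delta(\gamma,m)>0$; explicitly, $\delta\approx(m-3)(1-\varepsilon)/(\sqrt{2m}\,\gamma)$, which is positive since $m\ge 4$ and lies in $(0,1/2)$.

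The main obstacle is handling the divisibility issue: because $\lceil n/k\rceil$ can equal $m+1$ rather than $m$ when $m\nmid n$, the bound on $\alpha(R)$ carries a factor $(m+1)/m$ of slack, which forces $m$ to be chosen strictly below $\gamma^2/2$. In the regime where $\gamma$ is only slightly larger than $4$, this cramps the available $m$ and shrinks $\delta$. The ``slight modification of the triangle-free process'' alluded to by the authors likely runs a process directly on $n$ vertices to avoid the rigid block structure of the blow-up, sharpening the bound near $\gamma=4$; nevertheless, the black-box blow-up argument above already yields some $\delta\in(0,1/2)$ valid for every $\gamma>4$, as the theorem requires.
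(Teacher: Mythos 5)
Your proof is correct and follows the paper's own approach: an $n$-blow-up colouring of the triangle-free-process output $G(k,\varepsilon)$ with $k$ a tunable fraction of $n$, bounding $\alpha(R)$ and $|P|$ through \cref{thm:PropPurpleBlowUp} --- the paper merely parametrizes by a real $\zeta$ (taking $k=\lceil n/\zeta^2\rceil$) rather than by a fixed integer $m=\lfloor n/k\rfloor$, and carries out the $\varepsilon\to 0$ limit at the very end rather than letting $\varepsilon(n)\to 0$ along the way. Your closing speculation about a ``modified process'' is slightly off: that modification (colouring the first $m_1$ edges of the process red and the next $m_2-m_1$ purple) is the content of the separate \cref{thm:TriangleAtThresholdPrecise}, which covers the remaining range $\sqrt{2}<\gamma\le 4$, and plays no role in the theorem you have just proved.
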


\begin{proof}
We prove the first part of the theorem by showing that for any constant $\gamma>4$, there exist $\delta<1/2$, $\varepsilon\in(0,1)$, $\zeta>2$ and $n_0\geq 1$ such that the following holds for any $n\geq n_0$. There exists an integer $k\approx n/\zeta^2$ such that the $n$-blow-up colouring of the triangle-free-process $G=G(k,\varepsilon)$ as guaranteed by~\cref{thm:OutputTriangleFreeProcess} yields a colouring \RBP{} such that 
\[\alpha(R)<\gamma\sqrt{n\log n}, \quad \text{ and }\quad 
|P|\geq \delta \cdot \frac{\gamma\; n^{3/2}\sqrt{\log n}}{2}.\]
Let $\gamma>4$, and let $\varepsilon>0$ be sufficiently small such that
\[\varepsilon<\frac14,\text{ and }\qquad \gamma>\frac{4(1+\varepsilon)}{1-\varepsilon}.\]
Let $\zeta>2$ be such that 
\begin{equation}\label{eq:defzeta}
\gamma=\sqrt{2}\; \frac{1+\varepsilon}{1-\varepsilon}\Big(\zeta+\frac{1}{\zeta}\Big).
\end{equation}
We note that $\gamma>4(1+\varepsilon)/(1-\varepsilon)$ implies that such a choice of $\zeta$ is always possible, that $\zeta$ is increasing with $\gamma$, and that we indeed have $\zeta>2$. 

Let~$n$ be large enough, $t=\gamma\sqrt{n\log n}$, and let~$k$ be the smallest integer such that $n\leq \zeta^2 k$. Then  \(\zeta^2(k-1)<n\leq \zeta^2k\), and hence for large enough~$k$, 
\begin{equation}\label{eq:boundzeta}
    \zeta^2\sqrt{1-\varepsilon}<\zeta^2\Big(1-\frac{1}{k}\Big)<\frac{n}{k}\leq \zeta^2.
\end{equation}
Let $G=G(k,\varepsilon)$ be a graph as guaranteed by~\cref{thm:OutputTriangleFreeProcess}, that is a triangle-free graph on~$k$ vertices such that
\begin{align}\label{eq:PropsTFP}
\alpha(G)\leq (1+\varepsilon)\sqrt{2k\log k},\quad \text{ and }\quad e(G)\geq \frac{1}{2\sqrt{2}}(1-\varepsilon)k^{3/2}\sqrt{\log k}.
\end{align}
Let \RBP{} be an $n$-blow-up colouring of $G$. By~\cref{thm:PropPurpleBlowUp}, $R\cup P$ is a triangle-free graph such that 
\begin{align*}
    \alpha(R)\leq \alpha(G) \ceil*{\frac{n}{k}}
    &\leq (1+\varepsilon)\sqrt{2k\log k}\Big(\frac{n}{k}+1\Big) \\
    &\leq (1+\varepsilon)\sqrt{2n\log n}\Big(\zeta+\frac{1}{\zeta(1-\varepsilon)^{1/4}}\Big)\\ %   &\text{ by }\cref{eq:boundzeta}\;\\
    &\leq \frac{1+\varepsilon}{1-\varepsilon}\sqrt{2n\log n}\Big(\zeta+\frac{1}{\zeta}\Big)    \\
    &=\gamma\sqrt{n\log n}, %&\text{ by }\cref{eq:defzeta}, 
\end{align*}
where in the third inequality we use lower and upper bounds of~\cref{eq:boundzeta}, and the last equality is simply the definition of $\zeta$. 
In order to bound the number of purple edges, observe that for~$n$ large enough, $k\ge n/\zeta^2$ and $\zeta=O(1)$ imply that $\log k\ge \log n /2$, say. \cref{thm:PropPurpleBlowUp} and \cref{eq:PropsTFP} then imply that 
\begin{align*}
    |P|&\geq e(G)\Big(\frac{n^2}{k^2}-\frac{3n}{k}\Big)
    \ge \frac{1}{8}k^{3/2} \sqrt{\log k}\Big(\frac{n^2}{k^2} -\frac{3n}{k}\Big)\\
    &\ge \frac{1}{2^4\zeta^3}n^{3/2} \sqrt{\log n} \Big(\frac{n^2}{k^2} -\frac{3n}{k}\Big)\\
    &\ge \frac{1}{2^3\zeta^3\gamma} \Big(\frac{n^2}{k^2} -\frac{3n}{k}\Big) \frac{tn}{2}.
\end{align*}
Now, $n/k\ge \zeta^2\sqrt{1-\eps}>3$, by \cref{eq:boundzeta} and since $\eps<1/4$ and $\zeta>2$. It follows that $|P|\ge \delta tn/2$ for some $\delta=\delta (\gamma,\zeta)>0.$ 

For the second part, when $t\gg\sqrt{n\log n}$, we reason similarly, with some adjustments since we need more precise estimates on $\delta$. Observe that we can assume that $t=o(n)$ as otherwise the result follows from \cref{thm:TriangleCase}\ref{item:TriangleLinearCase}. 

Let $\varepsilon>0$, let~$n$ be large enough and let $\gamma=t/\sqrt{n\log n}$. Recall that here we assume $\gamma\to\infty$ as $n\to \infty$. Define $\zeta$ by \[\gamma=\sqrt{2}\; \frac{(1+\varepsilon)^2}{1-\varepsilon}\Big(\zeta+\frac{1}{\zeta}\Big)\Big(1-\frac{2\log\zeta}{\log n}\Big)^{1/2}.\] 
Note that $\zeta=\Theta (\gamma)$, 
and thus our assumptions on $t$ imply that $1\ll \zeta\ll \sqrt{n/\log n}.$
Again, let~$k$ be the smallest integer such that $n\leq \zeta^2 k$. 
Then $k\leq (n/\zeta^2)^{(1+\varepsilon)^2}$  for~$n$ large enough, and thus, 
\begin{equation*}
    \sqrt{\log k}\leq (1+\varepsilon)\sqrt{\log n}\cdot\Big(1-\frac{2\log\zeta}{\log n}\Big)^{1/2}.
\end{equation*}
The computations of $\alpha(R)$ and $|P|$ are now similar to the first case. From \cref{thm:PropPurpleBlowUp,eq:PropsTFP} we deduce that  
\begin{align*}
    \alpha(R)
    \leq (1+\varepsilon)\sqrt{2k\log k}\ceil*{\frac{n}{k}}
    \leq \frac{(1+\varepsilon)^2}{1-\varepsilon}\sqrt{2n\log n}\Big(\zeta+\frac{1}{\zeta}\Big)\Big(1-\frac{2\log\zeta}{\log n}\Big)^{1/2}   
    =\gamma\sqrt{n\log n},
\end{align*}
and 
\begin{align}\label{aux523}
|P| &\geq e(G)\Big(\frac{n^2}{k^2}-\frac{3n}{k}\Big)
\geq \frac{1-\varepsilon}{2\sqrt{2}}\, k^{3/2}\sqrt{\log k}\, \frac{n^2}{k^2}(1-o(1))\nonumber \\
&= \frac{1-\varepsilon}{\gamma \sqrt{2}}\, \Big(\frac{n \log k}{k\log n}\Big)^{1/2}\, \frac{\gamma n^{3/2}\sqrt{\log n}}{2}(1-o(1)),
\end{align}
where we used that $k=o(n)$ in this case. Now $n/k\ge \zeta^2(1-1/k)=\zeta^2(1-o(1))$ and
\begin{equation*}
    \sqrt{\log k}\geq \sqrt{\log n}\cdot\Big(1-\frac{2\log\zeta}{\log n}\Big)^{1/2}, 
\end{equation*}
by definition of $k$. Thus, \cref{aux523} implies that 
\begin{align} |P| &\ge (1-o(1)) \frac{(1-\varepsilon)\zeta}{\gamma \sqrt{2}}\, \Big(1-\frac{2\log\zeta}{\log n}\Big)^{1/2}\, 
\frac{tn}{2} \\
&= (1-o(1)) \frac{(1-\eps)^2}{(1+\eps)^2}\, \frac{tn}{4},
\end{align}
by definition of $\zeta$ and since $\zeta\to\infty$. The theorem follows since $\eps>0$ is arbitrary.
\end{proof}

%%%%%%%%%%%%%%%%%%% smaller range Triangle-free process 

To cover the remaining range of $t$, that is $t=\gamma\sqrt{n\log n}$ for  $\sqrt{2}<\gamma \le 4$, we need a much more refined analysis of the triangle-free process. Recall that we denote by $G_i$ the random graph obtained from the triangle-free process after adding $i$ edges. Furthermore, for a given $\varepsilon>0$ we let 
\begin{equation}\label{eq:defmstar}
m^*(\varepsilon) := \Big(\frac{1}{2\sqrt{2}}-\varepsilon\Big)n^{3/2}\sqrt{\log n}.    
\end{equation}

Fiz Pontiveros, Griffiths, and Morris~\cite{fpgm2013} defined a sequence of {\em good} events, denoted here by~$\mathcal{E}(m)$ for simplicity, such that if~$\mathcal{E}(m)$ holds then there are still free edges that do not close a triangle with $G_m$, implying that the triangle-free process has not stopped yet. They proved the following proposition. 
\begin{theorem}[Fiz Pontiveros, Griffiths, and Morris~\cite{fpgm2013}~Theorems~6.9~and~7.2]\label{thm:FPGM}
    For every sufficiently small constant $\varepsilon>0$, and every~$n$ large enough, let $m=m^*(\varepsilon)$. Then,
    \begin{enumerate}
        \item $\mathcal{E}(m)$ holds with probability at least $1-n^{-\log n}$.\label{thm:FPGM_E}
        \item With probability at least $1-e^{-\sqrt{n}}$ either $\mathcal{E}(m)$ fails or
    \(\alpha(G_{m})< \parens*{\sqrt{2}+10\sqrt{\varepsilon}}\sqrt{n\log n}\).\label{thm:FPGM_alpha}
    \end{enumerate}
\end{theorem}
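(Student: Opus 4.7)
The statement is the central output of Fiz Pontiveros, Griffiths, and Morris's long analysis of the triangle-free process via the differential equation method, so any plan is necessarily a high-level outline of that strategy. The task splits into showing that the process runs smoothly up to the predicted stopping time $m^*$, and showing that the graph $G_{m^*}$ has no large independent set.

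First, I would set up the state of the process by tracking a family of ``key'' random variables at each step~$m$: the number $Q(m)$ of \emph{open} pairs (pairs not in $G_m$ whose addition does not close a triangle); codegrees $X_{uv}(m)$ for every non-adjacent pair; ``$Y$-variables'' counting open cherries through each open pair; and, for each candidate independent set $S$ of size $k=\lceil(\sqrt{2}+10\sqrt{\eps})\sqrt{n\log n}\,\rceil$, a weight function $W_S(m)$ keyed to the expected number of open edges internal to $S$. Heuristic one-step expectation calculations yield a system of ODEs whose solutions $\hat Q(t),\hat X(t),\hat Y(t),\hat W_S(t)$ in normalised time $t=m n^{-3/2}(\log n)^{-1/2}$ predict the trajectories; the good event $\mathcal{E}(m)$ asserts that each tracked quantity lies within a tightly prescribed error window around its deterministic prediction.

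Second, for part~(i), I would prove that $\mathcal{E}(m)$ propagates with very high probability using Freedman's martingale inequality. For each tracked variable one decomposes its increment into conditional expectation plus a bounded martingale difference, estimates the quadratic variation using the concentration already established at earlier steps, and derives super-polynomially small per-step deviation probability. A union bound over the $O(n^{3/2}\sqrt{\log n})$ steps and the polynomially many variables gives $\Pr[\mathcal{E}(m^*)]\ge 1-n^{-\log n}$. For part~(ii), I would condition on $\mathcal{E}(m)$ and track $W_S$ for every $k$-set $S$; under the good events the decay of $W_S$ is very close to its deterministic prediction, and by step $m^*$ it is forced to drop below the threshold that would allow $S$ to remain independent. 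Since the probability of a fixed $S$ surviving as independent is $\exp(-\omega(k\log n))$, a union bound over $\binom{n}{k}\le \exp(O(k\log n))$ sets $S$ succeeds with room to spare, giving failure probability at most $e^{-\sqrt n}$.

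The hard part is the self-referential nature of the concentration bounds: the error guarantee for each $X_{uv}$ depends on those for the $Y$-variables, which depend on those for $Q$ and for other $X$'s, and so on through the hierarchy of tracked quantities. Keeping this feedback from amplifying the error terms — and, crucially, sharpening the constant all the way down to the conjectural $\sqrt 2$ rather than the weaker constant obtained in Bohman's original $R(3,t)=\Omega(t^2/\log t)$ argument — requires a very careful analysis of the covariances between different tracked quantities, particularly late in the process as $Q(m)$ approaches zero and the signal-to-noise ratio deteriorates. Executing this bookkeeping is the technical heart of the proof and is the reason the original argument runs to hundreds of pages.
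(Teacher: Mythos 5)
This theorem is not proved in the paper at all — it is a citation of Theorems~6.9 and~7.2 of Fiz Pontiveros, Griffiths, and Morris, and the paper's only original contribution to this statement is the remark immediately following it: namely, that Proposition~7.2 in that reference is stated for the \emph{final} graph $G_{n,\triangle}$ rather than for $G_m$ with $m=m^*(\varepsilon)$, and that the stronger statement about $G_m$ claimed in part~(ii) is nonetheless easily extracted from Section~7.7 of their proof. Your proposal is therefore a fundamentally different artefact from what the paper does: you sketch the actual differential-equations-method proof, whereas the paper defers entirely to the reference.

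As a high-level account of the FPGM argument, your outline is accurate in its main features — tracking $Q$, codegrees, $Y$-variables, and per-set weight functions; using Freedman-type martingale concentration to propagate the good events; a union bound over $\exp(O(k\log n))$ candidate independent sets — and you correctly identify where the difficulty lies (controlling the feedback between the hierarchy of tracked variables as the process approaches extinction, and extracting the sharp constant $\sqrt{2}$). However, one genuine gap relative to what the paper actually needs: you do not engage with the distinction between a bound on $\alpha(G_{n,\triangle})$ and a bound on $\alpha(G_{m^*})$ at an intermediate step, which is precisely the subtlety the authors flag and for which they point to Section~7.7. Since the colouring construction in Theorem~4.4 of the paper colours the first $m_1$ edges red and the next $m_2-m_1$ purple, the bound on the independence number of the \emph{intermediate} graph $G_{m_1}$ (not of the final graph) is what is actually used, so this is not a cosmetic point. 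A fully self-contained treatment should make explicit that the tracked decay of the weight functions $W_S$ already forces $\alpha(G_m)<(\sqrt{2}+10\sqrt{\varepsilon})\sqrt{n\log n}$ at time $m=m^*(\varepsilon)$, rather than only at the extinction time of the process.
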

We remark that~\cite[Proposition 7.2]{fpgm2013} is stated in terms of $G_{n,\triangle}$, that is the final random graph produced by the triangle-free process. However, the stronger statement of~\cref{thm:FPGM}\ref{thm:FPGM_alpha} is easily extracted from the proof in~\cite[Section 7.7]{fpgm2013}.
We use these results to prove the following theorem which, together with~\cref{eq:trivial_bound}, concludes the proof of~\cref{thm:TriangleCase}\ref{item:TriangleSublinearCase}.

\begin{theorem}\label{thm:TriangleAtThresholdPrecise}
For every $\varepsilon>0$, there exists a constant $\delta>0$ such that for~$n$ large enough and $t=(\sqrt{2}+\varepsilon)\sqrt{n\log n}$ we have
    \(g(n;3,t)\geq \delta nt/2.\)
\end{theorem}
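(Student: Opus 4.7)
The plan is to produce the colouring by running the triangle-free process in two phases on the vertex set $[n]$: a first phase, whose output is coloured red, is run just long enough for the independence number to drop strictly below $t$; a second phase then continues the same process a little longer, with the newly added edges coloured purple.

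Fix a small $\eps>0$ and set $\eps_1:=\eps^2/100$ and $\eps_2:=\eps_1/2=\eps^2/200$. Let $m_i:=m^*(\eps_i)$ for $i=1,2$ so that $m_1<m_2$, and let $(G_j)_{j\ge 0}$ denote the triangle-free process on $[n]$. Applying \cref{thm:FPGM} with parameters $\eps_1$ and $\eps_2$ respectively, and taking a union bound, with probability $1-o(1)$ all of the following hold simultaneously: $\mathcal{E}(m_1)$, $\mathcal{E}(m_2)$, and $\alpha(G_{m_1})<(\sqrt{2}+10\sqrt{\eps_1})\sqrt{n\log n}=(\sqrt{2}+\eps)\sqrt{n\log n}=t$. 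On this event, since $\mathcal{E}(m_2)$ implies that the process has not halted before time $m_2$, the graphs $G_{m_1}\subseteq G_{m_2}$ are both well-defined and triangle-free.

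Setting $R:=G_{m_1}$, $P:=G_{m_2}\setminus G_{m_1}$ and $B:=K_n\setminus G_{m_2}$ then yields a valid $(3,\lceil t\rceil)$-free colouring, since $R\cup P=G_{m_2}$ is triangle-free and $\alpha(R)=\alpha(G_{m_1})<t$. By the definition of $m^*$ in \cref{eq:defmstar},
\[
|P|=m_2-m_1=(\eps_1-\eps_2)\,n^{3/2}\sqrt{\log n}=\frac{\eps^2}{200}\,n^{3/2}\sqrt{\log n},
\]
while $nt/2=\tfrac12(\sqrt{2}+\eps)\,n^{3/2}\sqrt{\log n}$. Therefore $|P|\geq\delta\cdot nt/2$ with $\delta:=\eps^2/[100(\sqrt{2}+\eps)]>0$, establishing the desired bound.

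The main obstacle, handled entirely by \cref{thm:FPGM}, lies in balancing two competing constraints: $m_1$ must be large enough that $\alpha(G_{m_1})<t$, which by the FPGM bound pushes $m_1$ within an $O(\eps^2\,n^{3/2}\sqrt{\log n})$ window of the maximum $m^*(0)$; yet $m_2-m_1>0$ must remain positive so that purple edges exist at all. Choosing $\eps_2<\eps_1$ with both of order $\eps^2$ opens exactly such a window, producing a positive, if far from optimal, $\delta=\Theta(\eps^2)$.
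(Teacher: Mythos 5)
Your proof is correct and takes essentially the same approach as the paper: run the triangle-free process, colour the first $m^*(\varepsilon_1)$ edges red and the next $m^*(\varepsilon_2)-m^*(\varepsilon_1)$ edges purple, using \cref{thm:FPGM} with $\varepsilon_1=\varepsilon^2/100$ so that $10\sqrt{\varepsilon_1}=\varepsilon$ gives the required bound on $\alpha(G_{m_1})$. The only small omission is that you begin with ``Fix a small $\varepsilon>0$'' without addressing why this suffices for the ``for every $\varepsilon>0$'' claim in the statement: since $g(n;3,\cdot)$ is monotone, it is enough to prove the bound for $\varepsilon$ small enough that \cref{thm:FPGM} applies (the paper notes this explicitly); otherwise your union-bound phrasing and the specific choice $\varepsilon_2=\varepsilon_1/2$ are just presentational variants of the paper's conditioning argument and its arbitrary $\varepsilon_2\in(0,\varepsilon_1)$.
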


\begin{proof} 
    Observe that, by monotonicity, it is sufficient to prove the statement for $\varepsilon$ small enough. Therefore let $\varepsilon>0$ be small enough and $n$ be large enough such that~\cref{thm:FPGM} holds. Let $\varepsilon_1:=\varepsilon^2/100$, and fix a real number $\varepsilon_2\in(0,\varepsilon_1)$. Let $m_1=m^*(\varepsilon_1)$ and $m_2=m^*(\varepsilon_2)$ as given by~\cref{eq:defmstar}. Our goal is to follow the triangle-free process up to $m_2$, colouring all $m_1$ first edges in red, and then the remaining $m_2-m_1$ edges in purple. 

    We first condition the triangle-free process on $\mathcal{E}(m_2)$ holding. Because we have $m_2>m_1$, we know that $\mathcal{E}(m_1)$ holds, and therefore by~\cref{thm:FPGM}, 
    \[\mathbb{P}[\alpha(G_{m_1})\geq t\, |\,\mathcal{E}(m_2) ]\leq\frac{e^{-\sqrt{n}}}{1-n^{-\log n}}=o(1).\]
    We therefore select an outcome of the triangle-free process such that $\mathcal{E}(m_2)$ holds and $\alpha(G_{m_1})<t$. We colour all edges in $G_{m_1}$ in red, and all edges in $G_{m_2}\setminus G_{m_1}$ in purple. We obtain that $R\cup P$ is a triangle-free graph such that \(\alpha(R)=\alpha(G_{m_1})<  t\) and \(|P|=(\varepsilon_1-\varepsilon_2) n^{3/2}\sqrt{\log n}\). Therefore, 
    \[g(n;3,t)\geq (\varepsilon_1-\varepsilon_2) n^{3/2}\sqrt{\log n} = \delta nt/ 2,\]
    with \(\delta:=2(\varepsilon_1-\varepsilon_2)/(\sqrt{2}+\varepsilon)\).    
\end{proof}

\section{General clique sizes}\label{sec:GeneralS}

\subsection{Linear independence number}

This section is dedicated to the proof of~\cref{thm:GeneralSmallLinear}, showing that $g(n;s;t)$ is asymptotically equal to the Ramsey-Tur\'an number when $s$ is fixed, $t=cn$ and $c$ is small enough with respect to $1/s$. We will prove this statement in two steps, covering the case when $s$ is odd first, and then when $s$ is even.~\cref{thm:GeneralSmallLinear} follows easily using the bounds on $\RT{s}{cn}{n}$ from~\cref{thm:LudersReiher}.

\begin{theorem}\label{thm:LinearOdd}
    For any integer $\ell\geq 1$, and any constant $0<c<\frac{1}{3\ell}$, we have
    \[g(n;2\ell+1,cn) \geq\parens*{\frac{\ell-1}{\ell}+c+o(1)} \frac{n^2}{2}.\]
\end{theorem}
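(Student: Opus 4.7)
The plan is to imitate the classical Erd\H{o}s--S\'os Ramsey-Tur\'an construction for odd cliques and feed it into the blow-up colouring machinery of~\cref{def:PurpleBlowUp}. Specifically, I will build a graph $G$ on some fixed number $k$ of vertices satisfying $\omega(G)\le 2\ell$, $\alpha(G)/k<c$, and $e(G)/k^2\ge\tfrac{1}{2}\big(\tfrac{\ell-1}{\ell}+c-\eta\big)$, for arbitrarily small $\eta>0$. Blowing up such a $G$ to $n$ vertices then produces, via~\cref{thm:PropPurpleBlowUp}, a red/blue/purple colouring \RBP{} of $K_n$ that is $(2\ell+1,cn)$-free for $n$ sufficiently large and satisfies $|P|\ge((\ell-1)/\ell+c-\eta-o(1))\,n^2/2$. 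Letting $\eta\to 0$ yields the theorem.

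To define $G$, fix $\eta>0$ small. Since $c\ell<1/3$, Brandt's theorem~(\cref{thm:RTBrandt}) provides, for some large enough $k_H$, a $d$-regular triangle-free graph $H$ on $k_H$ vertices with $\alpha(H)=d$ and $d/k_H\in(c\ell-\eta,\,c\ell)$. Let $G$ be the graph on $k:=\ell\,k_H$ vertices obtained by placing $\ell$ vertex-disjoint copies $H_1,\dots,H_\ell$ of $H$ and joining every pair of vertices from distinct copies by an edge.

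The three targeted properties of $G$ are then immediate. Any clique of $G$ meets each $H_i$ in at most two vertices (since $H$ is triangle-free), so $\omega(G)\le 2\ell$. Any independent set of $G$ lies in a single $H_i$ (distinct copies are completely joined), so $\alpha(G)=\alpha(H)=d<c\ell\,k_H=ck$. Counting edges,
\[
e(G)=\ell\cdot\tfrac{d\,k_H}{2}+\binom{\ell}{2}k_H^2=\tfrac{k^2}{2}\!\left(\tfrac{d}{\ell k_H}+\tfrac{\ell-1}{\ell}\right)\ge \tfrac{k^2}{2}\!\left(\tfrac{\ell-1}{\ell}+c-\eta\right).
\]

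Applying an $n$-blow-up colouring of $G$ and invoking~\cref{thm:PropPurpleBlowUp}, we obtain $\omega(R\cup P)=\omega(G)\le 2\ell$, so $R\cup P$ is $K_{2\ell+1}$-free; $\alpha(R)\le\lceil n/k\rceil\, d<cn$ for $n$ large enough (using $d<c\ell k_H$ and $k$ fixed); and $|P|\ge e(G)(n^2/k^2)-O(e(G)n/k)\ge((\ell-1)/\ell+c-\eta)\,n^2/2-O(n)$. The only substantive input is the triangle-free graph of prescribed density guaranteed by Brandt's theorem, and the hypothesis $c<1/(3\ell)$ is exactly what ensures $c\ell<1/3$ falls in the range covered by~\cref{thm:RTBrandt}. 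Everything else is a direct verification of the three properties, so I do not anticipate any real obstacle beyond keeping the error terms $\eta$ and the rounding in the blow-up consistent.
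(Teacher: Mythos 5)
Your proof is correct, and the high-level strategy (Brandt's dense triangle-free graph plus an $\ell$-fold stacking to get clique number $2\ell$, followed by a blow-up colouring) is the same as the paper's, but the construction is organised differently. The paper first takes an $(n/\ell)$-blow-up colouring of the triangle-free graph $H$ itself, then manufactures the final colouring by embedding $\ell$ copies of it into parts $V_1,\dots,V_\ell$ and hand-colouring the edges between parts so that the red graph becomes (essentially) the strong product $R\boxtimes K_\ell$; it then re-verifies the independence number and purple-edge count by direct inspection of this ad hoc colouring. You instead push the entire $\ell$-fold structure into the base graph $G=H\vee\cdots\vee H$ (the $\ell$-fold complete join), observe that $\omega(G)=2\ell$, $\alpha(G)=\alpha(H)$, and that $e(G)/\binom{k}{2}$ has the right density, and then invoke \cref{thm:PropPurpleBlowUp} once on $G$. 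This is cleaner and more modular: everything is handled by the generic blow-up machinery, with no need to re-derive the $\alpha(R')\le\alpha(R)$ and edge-counting arguments from scratch. The resulting red graphs in the two proofs are genuinely different (disjoint copies of the join versus a strong product with $K_\ell$; yours has somewhat more red edges), but both red contributions are $O(n)$, so the asymptotic bound on $|P|$ is identical. One small point worth keeping in mind: your inequality $\alpha(R)\le\lceil n/k\rceil d<cn$ needs the strict gap $d/k<c$ (which you have, since $d/k_H<c\ell$), and holds only for $n$ large relative to the fixed $k$; you should state this explicitly rather than leaving it implicit.
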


\begin{proof}

     Fix $\eta>0$. Let $d,k$ be non-negative integers such that $(c-\eta)\ell< \frac{d}{k}< c\ell$, and such that there exists a triangle-free $d$-regular graph $G$ on~$k$ vertices with independence number $d$, as guaranteed by~\cref{thm:RTBrandt}. For~$n$ large enough, assume without loss of generality that $\ell$ divides~$n$ and let $\RBP$ be an $(n/\ell)$-blow-up colouring of $G$. It follows from~\cref{thm:PropPurpleBlowUp} that $R\cup P$ is a triangle-free graph and, since~$n$ is large enough, $R$ has independence number at most
     \[
        \ceil*{\frac{n}{\ell k}}\alpha(G)
        \leq \Big(\frac{d}{\ell k}+\frac{d}{\ell n}\Big)n
        < cn.
     \]
     We additionally obtain from~\cref{thm:PropPurpleBlowUp} that for~$n$ large enough, 
     \begin{align*}
         \size{P}\geq e(G)\Big(\frac{n^2}{\ell^2 k^2}-\frac{3n}{\ell k}\Big) = \frac{n^2}{2\ell} \Big(\frac{d}{\ell k}-\frac{3d}{2n\ell}\Big) > \frac{n^2}{2\ell}(c-\eta),
     \end{align*}
    and
    \[|R|\leq e(G)\frac{n/\ell+k}{k} = O(n).\] 
    
    We define a colouring \RBPprime{} of $F\cong K_n$ as follows. We partition the vertices of $F$ into $\ell$ disjoint sets $V_1,\ldots,V_{\ell}$ of size $n/\ell$. For each $i\in[\ell]$, we embed a copy of the colouring \RBP{} on the vertices $V_i$. For each $i\neq j\in[\ell]$ we colour the edges between $V_i,V_j$ in red and purple (with no blue edges) such that the resulting $R'$ will be isomorphic to the so-called strong product $R\boxtimes K_\ell$. Specifically, for $i\neq j$, $v_i\in V_i$ and $v_j\in V_j$, colour the edge $v_iv_j$
    \begin{itemize}
        \item red if $v_i$ and $v_j$ are copies of the same vertex of the colouring \RBP{};
        \item red if $v_i$ and $v_j$ are copies of distinct vertices $u,w$ in \RBP{} such that the edge $uw$ is in $R$; and
        \item purple otherwise.
    \end{itemize}

    \medskip 
    
    First note that $R'\cup P'$ is $K_{2\ell+1}$-free, since $(R'\cup P')[V_i]$ is a triangle-free graph for every $i\in\ell$.  
    Next, it is not hard to see that $\alpha(R') \le \alpha(R)$. Indeed, every $U\se V(F)$ that is independent in $R'$ can contain at most one copy of $u$, for each $u\in V(\RBP)$. Thus, $U=\{u_1^{j_1},\ldots,u_h^{j_h}\}$, where $u_1,\ldots, u_h$ are distinct vertices of $V(\RBP)$, and $u_i^{j_i}$ is the copy of $u_i$ in $V_{j_i}$ for every $1\le i \le h$. But this set $U$ is independent in $R'$ if and only if the set of vertices $\{u_1,\ldots,u_h\}$ is an independent set in $R$. Therefore, $\alpha(R') \le \alpha(R) < cn$.  
    To bound the number of purple edges from below, note that for any two distinct $i, j\in[\ell]$, the red edges between $V_i$ and $V_j$ can be decomposed into a perfect matching, with the addition of the edges $u_iw_j$ and $u_jw_i$ for every edge $uw$ of $R$ with copies $u_iw_i$ in $V_i$ and $u_jw_j$ in $V_j$. Therefore, there are exactly $2|R|+n/\ell$ red edges between $V_i$ and $V_j$, for all distinct $i, j\in[\ell]$. It follows that 
    \begin{align*}
    g(n;2\ell+1,cn)\geq |P'| &= \binom{\ell}{2}\brackets*{\parens*{\frac{n}{\ell}}^2 - 2|R|-\frac{n}{\ell}} + \ell|P|\\    
    &\geq \frac{n^2}{2}\Big(\frac{\ell-1}{\ell} + c - \eta\Big) -O(n).
    \end{align*}
Since $\eta>0$ can be chosen arbitrarily small, we obtain the desired result.    
\end{proof}

%%%%%%%%%%%%%%%%%%%%%%%%%%%%%%%%%
%% EVEN %%%%%%%%%%%%%%%%%%%%%%%%%
%%%%%%%%%%%%%%%%%%%%%%%%%%%%%%%%%

We now turn our attention to the case of even cliques. Modifying the Bollob{\'a}s--Erd\H{o}s construction~\cite{bollobas1976ramsey} for the study of $\RT{4}{t}{n}$, Fox, Loh, and Zhao~\cite{fox2015critical} proved the existence of the following graph.

\begin{theorem}[Fox, Loh, and Zhao~\cite{fox2015critical}]\label{thm:ExistenceGraphFox}
    For every $0<a<1/2$, for every even integer~$k$, large enough such that $a \geq \frac{1}{\varepsilon k}+\varepsilon$, where $\varepsilon = \frac{4(\log\log k)^{3/2}}{\log^{1/2} k}$, there exists a graph $G=G(a)$ on~$k$ vertices such that following properties hold.
    \begin{enumerate}
        \item $G$ is $K_4$-free,
        \item $\alpha(G)<ak$,
        \item $G$ has at least $\frac{1}{2}(\frac{1}{4}+a-a^2-2\varepsilon)k^2$ edges.
        %\item there exists a balanced partition $A\cup B$ of $V(G)$ such that $G[A]$ and $G[B]$ are triangle-free.
    \end{enumerate}
\end{theorem}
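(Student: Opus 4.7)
The natural starting point is the Bollob\'as--Erd\H{o}s construction~\cite{bollobas1976ramsey}, which produces a $K_4$-free graph on $k$ vertices with roughly $k^2/8$ edges and independence number $o(k)$ by placing a random point set on a high-dimensional sphere $S^{d-1}$ and adopting a geometric adjacency rule. The target edge bound $\tfrac{1}{2}(\tfrac14+a-a^2-2\varepsilon)k^2$ equals $\tfrac{k^2}{8}+\tfrac{a(1-a)}{2}k^2-\varepsilon k^2$, which exceeds the Bollob\'as--Erd\H{o}s density by $\tfrac{a(1-a)}{2}k^2$; correspondingly the independence threshold $ak$ is larger than what Bollob\'as--Erd\H{o}s achieves at $a\to 0$. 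This strongly suggests working with a one-parameter family that interpolates between the Bollob\'as--Erd\H{o}s graph (at $a\to 0$, density $\tfrac18$, independence $o(k)$) and a triangle-free structure of density $\tfrac14$ like $T_{k,2}$ (at $a\to\tfrac12$, independence about $k/2$).

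The plan is to place $k$ random points on $S^{d-1}$, with $d=d(k)$ growing suitably with $k$, and join two points whenever their inner product lies in a prescribed interval $[\alpha,\beta]$. In contrast to the single-threshold Bollob\'as--Erd\H{o}s rule, working with an interval gives two free parameters that can be tuned as functions of $a$: shrinking the interval around a fixed centre makes the graph sparser but keeps the independence number small, whereas widening it adds edges at the price of a larger independence number. Choosing $\alpha=\alpha(a,\varepsilon)$ and $\beta=\beta(a,\varepsilon)$ explicitly should then let us simultaneously hit the desired edge density and the desired independence threshold $ak$.

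The $K_4$-freeness should follow from a spherical-code argument: four points of $S^{d-1}$ cannot simultaneously have all six pairwise inner products in the chosen interval once $d$ is sufficiently large, via a Johnson-type bound for spherical codes. The edge count is a routine first-moment computation involving the measure of a spherical annulus. The main obstacle is the independence-number bound: one must show that with positive probability no $ak$-subset of the $k$ random points has all pairwise inner products lying outside $[\alpha,\beta]$. This calls for sharp concentration-of-measure estimates on $S^{d-1}$ combined with a union bound strong enough to track the error term $\varepsilon=4(\log\log k)^{3/2}/\log^{1/2}k$ of the statement; the specific form of $\varepsilon$ ought to emerge naturally from optimising the choice of dimension $d$ against the concentration precision.
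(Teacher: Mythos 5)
This theorem is not proved in the paper: it is quoted directly from Fox, Loh, and Zhao~\cite{fox2015critical}, with the authors remarking only that the explicit form of $\varepsilon$ can be extracted from the proof there. What Fox, Loh, and Zhao actually do is a \emph{combinatorial} augmentation of the Bollob\'as--Erd\H{o}s graph, not a reparametrisation of its geometric adjacency rule: they keep the Bollob\'as--Erd\H{o}s sphere construction as a black box on a $(1-2a)$-fraction of the vertices and append extra independent vertex classes, joined to the two Bollob\'as--Erd\H{o}s halves (and to each other) by complete bipartite graphs, so that the added edges contribute the $\approx(a-a^2)k^2/2$ term while the union of independent classes keeps $\alpha<ak$. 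The error term $\varepsilon=4(\log\log k)^{3/2}/\log^{1/2}k$ is inherited from the dimension-choice and concentration trade-offs already present in the Bollob\'as--Erd\H{o}s analysis, not from optimising the width of a spherical annulus.

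Your proposal would not produce such a graph, and the central obstruction is the interval-adjacency rule. If $K_4$-freeness is to follow from a Gram-matrix (spherical code) argument, the interval $[\alpha,\beta]$ must satisfy $\beta<-1/3$; but for independent uniform points on $S^{d-1}$ the inner product concentrates around $0$ with width $\Theta(1/\sqrt d)$, so $\Pr\big(\langle u,v\rangle<-1/3\big)=e^{-\Omega(d)}$ and the edge density vanishes as $d\to\infty$. Conversely, if $[\alpha,\beta]$ captures a non-negligible spherical measure it must contain a neighbourhood of $0$, and then four orthonormal vectors immediately give a $K_4$. No single interval rule can simultaneously deliver $\Theta(k^2)$ edges and $K_4$-freeness. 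This is exactly why the Bollob\'as--Erd\H{o}s graph is \emph{not} ``single-threshold'' on all pairs, as your sketch assumes: it bipartitions the point set $V_1\cup V_2$ and uses two different rules (near-antipodal within each $V_i$, close across parts), and $K_4$-freeness emerges from the interplay of the two rules rather than from a Johnson-type bound on one annulus. Repairing your argument would require abandoning the interval rule, taking the two-rule Bollob\'as--Erd\H{o}s structure as given, and then carrying out the combinatorial augmentation and the attendant bookkeeping of $\varepsilon$.
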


The quantitative relation between~$\varepsilon$,~$a$ and~$k$ is not explicit in~\cite[Theorem 1.7]{fox2015critical} but is easily extracted from their proof. We use the graphs from~\cref{thm:ExistenceGraphFox} as the starting point for our construction when $s$ is even, but using blow-up colourings of these graphs is far from sufficient to prove~\cref{thm:LinearEven}, as we will need to ``sprinkle'' sparse random graphs to obtain asymptotically tight bounds.

\begin{theorem}\label{thm:LinearEven}
    For any integer $\ell\geq 2$, and any constant $c<1/(3\ell-2)$, we have
    \[g(n;2\ell,cn)\geq\frac{1}{2}\parens*{\frac{3\ell-5}{3\ell-2}+c-c^2-o(1)}n^2.\]
\end{theorem}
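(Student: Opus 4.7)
My plan is to construct a base graph $G'$ on some suitable number $k$ of vertices satisfying $\omega(G') \le 2\ell-1$, $\alpha(G') \le ck$, and $e(G') \ge \left(\frac{3\ell-5}{3\ell-2}+c-c^2-o(1)\right)k^2/2$, and then apply the $n$-blow-up colouring of \cref{def:PurpleBlowUp} to $G'$. By \cref{thm:PropPurpleBlowUp}, the resulting colouring \RBP{} satisfies $\omega(R\cup P) = \omega(G') \le 2\ell-1 < 2\ell$, $\alpha(R) \le \alpha(G')\lceil n/k\rceil$, and $|P| \ge e(G')\,n^2/k^2 - O(n^2/k)$. Taking $k$ growing slowly with $n$ and slightly shrinking $c$ by a vanishing amount (to ensure the independence condition $\alpha(R)<cn$ is strict) then gives the desired lower bound.

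For the construction of $G'$, I mimic the extremal graph underlying the L\"uders--Reiher bound (\cref{thm:LudersReiher}): partition $V(G')$ into $\ell-2$ parts $V_1,\ldots,V_{\ell-2}$ each of size $a = 3k/(3\ell-2)$, and one part $W$ of size $b = 4k/(3\ell-2)$. Inside each $V_i$, I place an equitable blow-up of a triangle-free $d$-regular graph on $m$ vertices with $\alpha = d$ (from \cref{thm:RTBrandt}), where $d/m$ is chosen just below $\beta := c(3\ell-2)/3$; since $c<1/(3\ell-2)$ we have $\beta<1/3$, so such a graph exists. Inside $W$, I place a Fox--Loh--Zhao graph (\cref{thm:ExistenceGraphFox}) with independence parameter just below $\gamma := c(3\ell-2)/4 < 1/4$. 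Between any two distinct parts, I put every edge (complete bipartite).

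The required properties of $G'$ then follow. A clique in $G'$ meets each triangle-free part $V_i$ in at most $2$ vertices and $W$ in at most $3$ vertices, so $\omega(G') \le 2(\ell-2)+3 = 2\ell-1$. Every independent set in $G'$ lies inside a single part (since distinct parts are joined completely), so $\alpha(G') \le \max(\beta a,\gamma b) = ck$. Summing the within-part contributions $(\ell-2)\beta a^2/2 + (1/4+\gamma-\gamma^2)b^2/2$ and the between-part contributions $\binom{\ell-2}{2}a^2+(\ell-2)ab$, substituting the values of $a, b, \beta, \gamma$, and using the identity $9\ell^2-21\ell+10 = (3\ell-5)(3\ell-2)$ to simplify, yields the claimed $e(G') \ge \left(\frac{3\ell-5}{3\ell-2}+c-c^2-o(1)\right)k^2/2$. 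In particular, the coefficient of $c$ collapses to $\frac{3(\ell-2)}{2(3\ell-2)}+\frac{2}{3\ell-2} = \frac{1}{2}$.

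The main technical obstacle is parameter tuning. \cref{thm:RTBrandt} only provides a dense subset of densities in $(0,1/3)$, so $d/m$ must be chosen slightly below $\beta$ rather than exactly equal; and \cref{thm:ExistenceGraphFox} requires $\gamma \ge 1/(\varepsilon k_2)+\varepsilon$ with $\varepsilon = 4(\log\log k_2)^{3/2}/\sqrt{\log k_2}$, so the internal FLZ parameter $k_2$ (and hence $k$) must be taken large enough for the fixed $\gamma$. Both points are handled by fixing a small slack $\eta>0$, using triangle-free and Fox--Loh--Zhao graphs with densities $\beta-\eta$ and $\gamma-\eta$, choosing $m, k_2$ and $k$ large enough that all error terms are $o(k^2)$, blowing up to $n$ via \cref{def:PurpleBlowUp}, and finally letting $\eta\to 0$.
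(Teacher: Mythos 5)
Your proposal is correct but takes a genuinely different route from the paper. The paper builds the colouring directly on $K_n$: it places blow-up colourings of a Fox--Loh--Zhao graph and of $\ell-2$ Brandt graphs inside $\ell-1$ parts of $K_n$ of the prescribed sizes, colours \emph{all} cross-part edges purple except for a random sprinkling of red edges with probability $p$ satisfying $1/n\ll p\ll 1$, and then needs a probabilistic lemma (Claim~\ref{clm:prob}) to guarantee that any independent set of size $cn$ must contain two sets of $\Theta(n)$ vertices in distinct parts joined by a red edge. You instead assemble a single auxiliary graph $G'$ on $k\to\infty$, $k=o(n)$ vertices with exactly the same within-part ingredients, join the parts completely, and apply the $n$-blow-up colouring of \cref{def:PurpleBlowUp}; the control on $\alpha(R)$ then comes entirely from \cref{thm:PropPurpleBlowUp}\ref{item:BPClique} together with $\alpha(G')\le ck$, which holds because every independent set of $G'$ is confined to a single part. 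Your edge arithmetic checks out (the $c$-coefficient collapses to $1/2$ and $(3\ell-5)(3\ell-2)=9\ell^2-21\ell+10$ yields the correct constant term), and the slack/divisibility issues are of the same nature the paper already handles. The trade-off is that your approach avoids any probabilistic argument, at the cost of introducing an extra blow-up scale $k$ that must be tuned to grow to infinity while staying $o(n)$; the paper's direct construction avoids nested blow-ups but pays with the random red sprinkling.
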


\begin{proof}
We take a blow-up colouring of a $K_4$-free graph as guaranteed by~\cref{thm:ExistenceGraphFox}, together with $(\ell-2)$ disjoint copies of a blow-up colouring of a $K_3$-free graph as guaranteed by~\cref{thm:RTBrandt}, with almost only purple edges in between these graphs except for a small random set of red edges, to form a $K_{2\ell}$-free graph with the desired properties.

Let $\eta>\delta>0$ be arbitrarily small constants. Let~$n$ be large enough, and without loss of generality, divisible by $3\ell-2$. Let 
\[ k=2\floor{\log \log n},\quad  \varepsilon=\frac{4(\log\log k)^{3/2}}{\log^{1/2}k}, \quad a=\frac{(3\ell-2)}{4}(c-\eta).
\]
For~$n$ large enough, observe that $a \geq \frac{1}{\varepsilon k}+\varepsilon$. Therefore, there exists a $K_4$-free graph~$G_4$ be on~$k$ vertices such that 
\[e(G_4)\geq \frac{1}{2}\Big(\frac{1}{4}+a-a^2-2\varepsilon\Big)k^2\quad \text{and}\quad \alpha(G_4)\leq ak,\]
by~\cref{thm:ExistenceGraphFox}. Let $\RBP[4]$ be a $(4n)/(3\ell-2)$-blow-up colouring of~$G_4$. It follows from~\cref{thm:PropPurpleBlowUp} that $R_4\cup P_4$ is a $K_4$-free graph such that
\begin{align}
  \alpha(R_4)\leq \ceil*{\frac{4n}{k(3\ell-2)}}\alpha(G_4)\leq \frac{4na}{(3\ell-2)}+ak
=(c-\eta)n+ak<(c-\delta)n,\label{eq:alphaR4}
\end{align}
for~$n$ large enough. We also obtain from~\cref{thm:PropPurpleBlowUp,thm:ExistenceGraphFox} that the number of purple edges is given by
\begin{align}
|P_4|&\geq e(G_4)\Big(\frac{(4n)^2}{k^2(3\ell-2)^2}-\frac{12n}{k(3\ell-2)}\Big)
\geq\frac{1}{2}\Big(\frac{1}{4}+a-a^2-o(1)\Big)\frac{(4n)^2}{(3\ell-2)^2}\nonumber\\
\label{eq:P4}
&=\frac{n^2}{2}\Big(\frac{4}{(3\ell-2)^2}+\frac{4(c-\eta)}{(3\ell-2)}-(c-\eta)^2-o(1)\Big).    
\end{align}
Observe that if $\ell=2$, we have $3\ell-2=4$ and we are done since $\eta$ is arbitrarily small. Assume now that $\ell>2$, and let $d$, $h$ be integers such that 
\[\frac{(3\ell-2)}{3}(c-\eta)<\frac{d}{h}<\frac{(3\ell-2)}{3}(c-\delta)<\frac13\] 
and such that~\cref{thm:RTBrandt} guarantees the existence of a triangle-free $d$-regular graph $G_3$ on $h$ vertices with independence number $d$. Let $\RBP[3]$ be a $(3n)/(3\ell-2)$-blow-up colouring of $G_3$. It follows from~\cref{thm:PropPurpleBlowUp} that for~$n$ large enough, $R_3\cup P_3$ is a triangle-free graph with
\begin{equation}\label{eq:alphaR3}
    \alpha(R_3)\leq \ceil*{\frac{3n}{h(3\ell-2)}}\alpha(G_3)\leq\Big(\frac{3d}{h(3\ell-2)}+\frac{d}{n}\Big)n <(c-\delta)n,    
\end{equation}
and 
\begin{equation}\label{eq:P3}
|P_3|\geq e(G_3)\Big(\frac{(3n)^2}{h^2(3\ell-2)^2}-\frac{9n}{h(3\ell-2)}\Big)\geq\frac{3(c-\eta)}{2(3\ell-2)}n^2.
\end{equation}

We now define a colouring \RBP{} of $F\cong K_n$ as follows. Informally, we take a copy of $\RBP[4]$ and $(\ell-2)$ vertex-disjoint copies of $\RBP[3]$. The edges between any two of these copies are coloured red randomly, with probability $p$, and purple with probability $1-p$. We show that the independence number of $R$ is not much larger than the one of each~$R_i$. 

More formally, partition the vertices of $F$ into one set $V_1$ of size $(4n)/(3\ell-2)$ and $\ell-2$ sets $V_2,\ldots,V_{\ell-1}$ of size $(3n)/(3\ell-2)$ each. Embed a copy of the colouring $\RBP[4]$ into $F[V_1]$, and for each $i\in\{2,\ldots,\ell-1\}$, embed a copy of $\RBP[3]$ in $F[V_i]$. Let $p\in (0,1)$ be such that $1/n\ll p \ll 1$, and for $i,j \in \binom{[\ell-1]}{2}$, colour each edge between $V_i$ and $V_j$ in red independently at random with probability $p$, and purple otherwise.

\begin{claim}\label{clm:prob}
    For every $\mu>0$, the following properties are satisfied with probability $1-o(1)$ as $n\to\infty$.
    \begin{enumerate}[label=(P\arabic*)]
        \item There are $o(n^2)$ red edges between any two distinct sets $V_i$ and $V_j$.\label{item:nbedges}
        \item For all sets $X\subseteq V_i$ and $Y\subseteq V_j$, with $i\neq j$, such that $|X||Y|\geq \mu n^2$, there exists a red edge between $X$ and $Y$.\label{item:crossedges}
    \end{enumerate}
\end{claim}

\begin{proof}
    \cref{item:nbedges} is true for $p\ll 1$ by standard concentration results on the number of edges in the binomial random graph. Fix two sets $X\subseteq V_i$ and $Y\subseteq V_j$ such that $|X||Y|\geq \mu n^2$. The probability that there is no red edge between $X$ and $Y$ is \((1-p)^{|X||Y|}\leq \exp\parens*{-\mu p n^2}\). There are at most $2^{2n}$ choices of sets $X,Y$.~\cref{item:crossedges} follows immediately from the union bound.
\end{proof}

Fix a colouring satisfying~\cref{item:nbedges,item:crossedges}. Observe that any set of $2\ell$ vertices in $F$ contains either $3$ vertices from $V_i$ for some $i\in\set{2,\ldots,\ell-1}$ or $4$ vertices from $V_1$. Given that $(R\cup P)[V_i]$ is $K_{4}$-free if $i=1$ and $K_3$-free if $i\in\set{2,\ldots,\ell-1}$, we obtain that $R\cup P$ is $K_{2\ell}$-free. Assume now that $\alpha(R)\geq cn$, and so there exists a set of vertices $I$ with $|I|\geq cn$ and no red edges in $F[I]$. By the pigeonhole principle, there exists $i\in[\ell-1]$ such that $X=I\cap V_i$ contains at least $cn/(\ell-1)$ vertices. Fix such an~$i$. 
Now $\alpha(R[V_i]) < (c-\delta)n$, by~\cref{eq:alphaR4,eq:alphaR3}. Therefore,~$I$ contains at least $\delta n$ vertices not in~$V_i$. Once again, by the pigeonhole principle there exists $j\in[\ell-1]\setminus\{i\}$ such that $Y=I\cap V_j$ has size at least $\delta n/(\ell-2)$. By~\cref{item:crossedges} there exists a red edge between the two sets $X,Y$, contradicting the fact that $F[I]$ contains no red edges.

Finally, to bound the total number of purple edges in $P$ from below, observe that~\cref{item:nbedges} implies that the number of purple edges between $V_i$ and $V_j$, for $i, j\in[\ell-1]$ with $i\neq j$, is at least $|V_i||V_j|-o(n^2)$. Given that any purple edge in $P$ is either between $V_i$ and $V_j$ for some distinct $i,j\in[\ell-1]$, in $P_4$, or in one of the $\ell-2$ copies of $P_3$, we obtain from~\cref{eq:P4,eq:P3} that 
\begin{align*}
    |P| &\geq |P_4| + (\ell-2)|P_3|
    +\binom{\ell-2}{2}\frac{9n^2}{(3\ell-2)^2}
    +(\ell-2)\frac{12n^2}{(3\ell-2)^2}-o(n^2)\\
    &\geq\frac{1}{2}\Big(\frac{3\ell-5}{3\ell-2}+(c-\eta)-(c-\eta)^2-o(1)\Big)n^2,
\end{align*}
and the result follows as $\eta$ is arbitrarily small. 
\end{proof}

We note that, in~\cref{clm:prob}, we could use the better estimate $\Theta(pn^2)$ for red edges in the random colourings between $V_i$'s and $V_j$'s, and hence increase the second order term in $|P|$. However, the error term $\varepsilon$ from~\cref{thm:ExistenceGraphFox} used for $P_4$ leads to an error term in $|P|$ that is at least of order $\Theta\parens*{n^2/\sqrt{\log n}}$.

\subsection{A probabilistic bound}

In this section, we prove~\cref{thm:ProbabilisticLowerBound} and show that for any $s$, the behaviour of $g(n;s,t)$ closely follows $\RT{s}{t}{n}$ up to some logarithmic offset in the function $t$. 

\begin{proof}[Proof of~\cref{thm:ProbabilisticLowerBound}]
For some absolute constant $0<p<1$ to be determined later, let $s\geq 3$ be a fixed integer and let~$n$ be large enough. Let $t\in\brackets*{R^{-1}(s,n)\log n,n}$ and let $G$ be a $K_s$-free graph on~$n$ vertices, with $\alpha(G)< t/\log n$ and $e(G) = \RT{s}{t/\log n}{n}$. Let $G_p$ be the spanning subgraph of~$G$ obtained by randomly selecting the edges of $G$ with probability $p$, independently of each other. Let \RBP{} be the colouring of $K_n$ defined by
\[\begin{cases}
    P &= E(G_p),\\
    R &= E(G)\setminus E(G_p),\\
    B &= E(K_n)\setminus E(G).\\
\end{cases}\]
By standard concentration results, we know that with high probability 
\begin{equation}\label{eq:ConcentrationP}
|P|\geq \mathbb{E}\big[|P|\big]/2 = p/2\cdot\RT{s}{t/\log n}{n}.    
\end{equation}
Observe that $R\cup P$ is isomorphic to $G$ and therefore a $K_s$-free graph. By the union bound,
\begin{align*}
\Pr[\alpha(R)\geq t] &
\leq \sum_{\substack{T\subseteq V(G)\\|T|=t}}\Pr\brackets*{T\text{ is independent in }R} 
= \sum_{\substack{T\subseteq V(G)\\|T|=t}} p^{e(G[T])}.
\end{align*}
For any such fixed set $T$, let $F=G[T]$. Then $F$ is a graph on $t$ vertices with $\alpha(F)\leq \alpha(G)< t/\log n$. Thus by \cref{thm:Turan} applied to the complement of $F$, 
\[e(F)\geq \alpha(F)\frac{(t/\alpha(F))\cdot(t/\alpha(F)-1)}{2}\geq \frac{t^2}{4\alpha(F)}\geq \frac{t \log n}{4}.\]
Therefore for $n$ large enough, by the union bound,
\begin{align*}
  \Pr[\alpha(R)\geq t]&\leq \binom{n}{t}p^{(t \log n)/4} \leq \exp\brackets*{\frac{t\log n}{4}\Big(4+\log p\Big)}.
\end{align*}
Thus for $p<e^{-4}$, with high probability $R$ contains no independent set of size $t$. Since \cref{eq:ConcentrationP} also holds with probability $1-o(1)$, 
there exists an $(s,t)$-free colouring \RBP{} of $K_n$ such that $|P|\geq p/2\cdot\RT{s}{t/\log n}{n}$, 
and the claim follows.
\end{proof}

Assuming~\cref{conj:OnRamsey}, Balogh, Hu, and Simonovits~\cite{balogh2015phase} showed the following.
\begin{theorem}[Balogh, Hu, and Simonovits~\cite{balogh2015phase}
]\label{theorem:BHSphase}
If~\cref{conj:OnRamsey} is true for some $2\leq \ell\leq s$, then
    \[\RT{s}{R^{-1}(\ell,n)}{n}=\frac{1}{2}\Big(1-\frac{1}{
    \floor*{
    \frac{s-1}{\ell-1}
    }
    }\Big)n^2+o(n^2).\]
\end{theorem}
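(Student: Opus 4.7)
The plan is to establish matching asymptotic upper and lower bounds for $\RT{s}{R^{-1}(\ell,n)}{n}$, writing $q:=\floor*{(s-1)/(\ell-1)}$. The target value $\frac{1}{2}(1-1/q)n^2$ is precisely the Tur\'an threshold for $K_{q+1}$-free graphs, so an extremal $K_{q+1}$-partite construction guides both halves of the argument. \Cref{conj:OnRamsey} enters only in the upper bound, to convert the regularity reduction into an actual copy of $K_s$.

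For the lower bound, I would use a $K_q$-blow-up: partition $n$ vertices into $q$ almost-equal parts $V_1,\dots,V_q$, place on each $V_i$ a $K_\ell$-free graph $H_i$ of minimum independence number $R^{-1}(\ell,|V_i|)$, and join every pair $V_i,V_j$ by a complete bipartite graph. Any clique in the resulting graph $G$ contains at most $\ell-1$ vertices from each $V_i$, so $\omega(G)\le q(\ell-1)\le s-1$; any independent set lies in a single $V_i$, so $\alpha(G)=\max_i\alpha(H_i)\le R^{-1}(\ell,\lceil n/q\rceil)\le R^{-1}(\ell,n)$ by the monotonicity of $R^{-1}$ in its second argument. (Strict inequality can be enforced by perturbing the part sizes slightly if $R^{-1}(\ell,\cdot)$ happens to be flat near $n$.) A direct calculation gives $e(G)=\tfrac{1}{2}(1-1/q)n^2-o(n^2)$.

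For the upper bound, suppose $G$ is $K_s$-free on $n$ vertices with $\alpha(G)<\tau:=R^{-1}(\ell,n)$ and $e(G)\ge\tfrac12(1-1/q+\varepsilon)n^2$. Applying Szemer\'edi's Regularity Lemma with a small $\varepsilon'=\varepsilon'(\varepsilon)$ yields a bounded number $M$ of clusters; after discarding irregular or low-density pairs the reduced graph still has density at least $1-1/q+\varepsilon/2$, so Tur\'an's theorem gives $q+1$ clusters $V_1,\dots,V_{q+1}$ that are pairwise $\varepsilon'$-regular of density at least some $d_0>0$, each of size $m\ge n/(2M)$. Inside each $V_i$ the induced graph satisfies $\alpha(G[V_i])\le\alpha(G)<\tau$. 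If $G[V_i]$ were $K_{\ell-1}$-free then Ramsey would force $|V_i|<R(\ell-1,\tau)$; but \cref{conj:OnRamsey} at level $\ell$, combined with the definitional inequality $R(\ell,\tau)\le n$, gives $R(\ell-1,\tau)\le R(\ell,\tau)/\tau^\vartheta\le n/\tau^\vartheta$. Since the Erd\H{o}s--Szekeres bound $R(\ell,t)=O(t^{\ell-1})$ together with $R(\ell,\tau+1)>n$ forces $\tau\to\infty$ with $n$ while $M$ is fixed, we have $m\ge n/(2M)>n/\tau^\vartheta\ge R(\ell-1,\tau)$ for $n$ large, a contradiction. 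Hence each $G[V_i]$ contains a copy of $K_{\ell-1}$.

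Finally, I would stitch the $q+1$ cliques into a $K_{(q+1)(\ell-1)}\supseteq K_s$ in $G$ by a standard Erd\H{o}s--Stone-type embedding adapted to the Ramsey--Tur\'an setting. Iteratively pick $v_1\in V_1$ whose neighborhood meets each other cluster in at least $(d_0-\varepsilon')m$ vertices (most vertices satisfy this by $\varepsilon'$-regularity), shrink each $V_j$, $j\ge 2$, to $V_j\cap N(v_1)$, and re-apply the Ramsey argument within $V_1$ to find $v_2$ adjacent to $v_1$ with large surviving neighborhoods. Continue until $v_1,\dots,v_{\ell-1}\in V_1$ form a clique sharing a large common neighborhood in each other cluster; then repeat the procedure inside $V_2$, restricted to that common neighborhood, and so on through $V_{q+1}$. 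The main obstacle lies in this final step: the regularity constants and cluster sizes degrade at each of the $(q+1)(\ell-1)$ vertex selections, so one must choose $\varepsilon'$ small enough upfront that every surviving subset of each cluster still has size exceeding $R(\ell-1,\tau)\le n/\tau^\vartheta$ at the moment it is needed, a condition that \cref{conj:OnRamsey} together with $\tau\to\infty$ ultimately underwrites.
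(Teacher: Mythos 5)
The paper does not prove this theorem: it is quoted verbatim from Balogh, Hu, and Simonovits~\cite{balogh2015phase} as a black box, so there is no in-house argument to compare against. I will therefore assess your proposal on its own terms. Your overall blueprint --- a complete $q$-partite graph with a $K_\ell$-free, low-independence graph on each part for the lower bound, and regularity plus a Ramsey argument inside the clusters of a $K_{q+1}$ in the reduced graph for the upper bound --- is the right shape, and your extraction $R(\ell-1,\tau)\le R(\ell,\tau)/\tau^{\vartheta}\le n/\tau^{\vartheta}$ from \cref{conj:OnRamsey} is correct and used in the right place.

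However, the embedding step in the upper bound has a genuine gap, and it is not the one you flag. You propose to pick $v_1\in V_1$, shrink $V_2,\dots,V_{q+1}$ to $N(v_1)$, and then ``re-apply the Ramsey argument within $V_1$ to find $v_2$ adjacent to $v_1$.'' To locate such a $v_2$ you would have to apply the Ramsey argument inside $V_1\cap N(v_1)$, but $|V_1\cap N(v_1)|$ is completely uncontrolled: $(V_1,V_1)$ is not a regular pair, and $v_1$ may have few or no neighbours inside its own cluster no matter how small $\varepsilon'$ is. The ``degrading cluster sizes'' you single out as the main obstacle are in fact fine --- the slicing lemma keeps $|V_j\cap N(v_1)|$ under control for every $j\ne 1$ --- the uncontrolled quantity is the \emph{within-cluster} degree. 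The natural repair of first extracting a full $K_{\ell-1}$ from $V_1$ via Ramsey and only then intersecting cross-neighbourhoods also does not close the gap: the crude union bound gives a common neighbourhood of size at least $\bigl(1-(\ell-1)(1-d_0+\varepsilon')\bigr)|V_j|$, which is negative once $\ell-1>1/(1-d_0)$; and a counting-lemma comparison fails because the Ramsey argument only guarantees a density of $K_{\ell-1}$'s in $V_1$ of order $1/\binom{R(\ell-1,\tau)}{\ell-1}$, which tends to $0$, whereas the fraction of $(\ell-1)$-tuples with small common cross-neighbourhood is merely bounded by a fixed multiple of $\varepsilon'$. Bridging this requires an additional idea --- for instance a dependent-random-choice step producing a large subset of $V_1$ in which \emph{every} $(\ell-1)$-tuple already has a large common neighbourhood in each other cluster, applied before the Ramsey argument, with its parameters balanced against $R(\ell-1,\tau)\le n/\tau^\vartheta$ --- and this balancing is precisely where the technical content of the Balogh--Hu--Simonovits proof lies. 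A smaller issue: in the lower bound you need $\alpha(G)<R^{-1}(\ell,n)$ strictly, and ``perturbing the part sizes'' does not obviously achieve this since $R^{-1}(\ell,\cdot)$ is constant on long stretches; this can be justified using \cref{conj:OnRamsey} (which you are already assuming), but it should be argued rather than asserted.
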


We now prove~\cref{prop:Generalbehaviour_precise} by combining~\cref{theorem:BHSphase} with the lower bound from~\cref{thm:ProbabilisticLowerBound}. 

\begin{proof}[Proof of~\cref{prop:Generalbehaviour_precise}]
    Assume first that $t\geq R^{-1}\parens*{\frac{s+1}{2},n}\log n$. It follows from~\cref{thm:ProbabilisticLowerBound} and~\cref{eq:upper_bound_RT} that for some constant $p>0$,
    \[ p \cdot \RT{s}{t/\log n}{n} \leq g(n;s,t)\leq \RT{s}{t}{n}.\]  
    Applying~\cref{theorem:BHSphase} we obtain $g(n;s,t)=\Theta(n^2)$.
    The second point follows similarly, by observing that if $\ell\geq\frac{s+3}{2}$, then $\frac{s-1}{\ell-1}<2$ and~\cref{theorem:BHSphase} yield that  $\RT{s}{R^{-1}(\ell,n)}{n}=o(n^2)$.
\end{proof}

\section{Computational results}\label{sec:comp}

In this section, we present some computational results obtained at the start of this project, which might be of interest for future work.

Angell's Problem~\ref{DavidProblem} asks to prove that every red/blue/purple colouring of $K_8$, in which the purple edges form a perfect matching, contains a red/purple copy of $K_3$ or a blue/purple copy of $K_4$. It is not hard to prove that three purple matching edges are sufficient, while two are not. 

More generally, let us denote by $g_M(n;s,t)$ the largest $m$ such that there exists a red/blue/purple colouring of $K_n$ with a matching of exactly $m$ purple edges that has neither a red/purple copy of~$K_s$ nor a blue/purple copy of~$K_t$. If, for example, $g_M(n;s,t)=0$, then a single purple edge is sufficient to guarantee a red/purple~$K_s$ or a blue/purple~$K_t$. 

For all $s,t$ for which $R(s,t)$ is known, McKay's webpage~\cite{BrendanPage} provides the list $\mathcal{L}(n;s,t)$ of all graphs on $n=R(s,t)-1$ vertices, with no clique of size~$s$, and no independent set of size~$t$, that is, the list of red graphs in $(s,t)$-free red/blue colourings of~$K_n$. 
A crucial observation is that if \RBP{} is a red/blue/purple colouring that with no red/purple copy of~$K_s$ and no blue/purple copy of~$K_t$, then every recolouring of the purple edges by either red or blue produces a red graph in $\mathcal{L}(n;s,t)$.

\begin{observation}\label{rem_pseudo}
Fix $n>s,t\geq 3$. If $g_M(n;s,t) \geq k$, then there exist a graph~$R$ in $\mathcal{L}(n;s,t)$ and a matching~$P$ in~$K_n\setminus R$ of size~$k$ such that $R\cup P'$ is in $\mathcal{L}(n;s,t)$ for every subset $P'\subseteq P$.
\end{observation}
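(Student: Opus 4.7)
The plan is to observe that any colouring witnessing $g_M(n;s,t)\geq k$ already provides the desired pair $(R,P)$, so no new construction is needed. I would start by unpacking the definition of $g_M$: if $g_M(n;s,t)\geq k$, then there exists a red/blue/purple colouring $R_0\cup B_0\cup P_0$ of $K_n$ in which $P_0$ is a matching of size $k$, the graph $R_0\cup P_0$ is $K_s$-free, and the graph $B_0\cup P_0$ is $K_t$-free. My candidates are simply $R:=R_0$ and $P:=P_0$, with no further modification.

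Then $P$ is, by construction, a matching of size $k$ contained in $K_n\setminus R$. For the main claim, I would fix an arbitrary subset $P'\subseteq P$ and verify both defining properties of $\mathcal{L}(n;s,t)$ for $R\cup P'$. For the clique bound, note that $R\cup P'\subseteq R_0\cup P_0$, so $K_s$-freeness is inherited. For the independence number, the complement of $R\cup P'$ in $K_n$ is precisely $B_0\cup(P_0\setminus P')$, which is a subgraph of the $K_t$-free graph $B_0\cup P_0$; hence $R\cup P'$ has no independent set of size $t$. In particular, taking $P'=\emptyset$ yields $R\in\mathcal{L}(n;s,t)$ as a special case.

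There is no genuine obstacle here: the observation is essentially an unpacking of the definitions, the only subtlety being that one must remember to verify the two conditions for \emph{every} subset $P'$ rather than only for $P$ itself, and this is guaranteed by monotonicity since both bad configurations ($K_s$ in $R\cup P'$ and $K_t$ in its complement) can only disappear when edges are moved from purple into blue. The value of the statement is algorithmic: as the preceding paragraph in the paper notes, the list $\mathcal{L}(n;s,t)$ has been tabulated by McKay for several small $(s,t)$, and this observation reduces the search for colourings certifying $g_M(n;s,t)\geq k$ to enumerating graphs $R\in\mathcal{L}(n;s,t)$ and, for each, looking among matchings $P$ in its complement for one such that every subset $P'\subseteq P$ keeps $R\cup P'$ inside $\mathcal{L}(n;s,t)$.
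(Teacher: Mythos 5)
Your argument is correct and matches the paper's reasoning: the paper gives no separate proof beyond the preceding paragraph, which makes exactly your observation that any recolouring of the purple edges red/blue yields a red graph $R\cup P'\subseteq R\cup P$ (inheriting $K_s$-freeness) whose complement $B\cup(P\setminus P')\subseteq B\cup P$ inherits $K_t$-freeness. Your closing remark about monotonicity is a bit loosely phrased (a $K_t$ in the complement cannot appear as $P'$ shrinks precisely because that complement stays a subgraph of $B\cup P$, not because it ``can only disappear''), but the core verification in your second paragraph is correct and does not rely on it.
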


We remark further that it is sufficient to check that $R$ and $R\cup P$ are in $\mathcal{L}(n;s,t)$, as this trivially implies that $R\cup P'$ is in $\mathcal{L}(n;s,t)$ for every subset $P'\subseteq P$. Using SageMath~\cite{sagemath} and Nauty~\cite{mckay2014practical}, we profit from these remarks, and several other optimisations beyond brute force, to compute $g_M(n;s,t)$ for all triples $s,t,n$ where $n=R(s,t)-1$ is known. For completeness, we include pseudocode in~\cref{sec:PseudoCode}. \cref{table:gm} contains all computational results obtained for $g_M(n;s,t)$, where $(\dagger)$ denotes the assumption that the list $\mathcal{L}(42;5,5)$ is complete, which is conjectured in~\cite{r55conjecture}. If true, this would imply that $R(5,5)=43$, since none of the colourings in $\mathcal{L}(42;5,5)$ extend to a $(5,5)$-free colouring on~43 vertices, see~\cite{r55conjecture} for details.

%%%%%%%%%%%%%%%%%%%%%%%%%%%%%%%
% COMP
%%%%%%%%%%%%%%%%%%%%%%%%%%%%%%%

\begin{table}[H]
\begin{center}
\begin{tabular}{ | >{\columncolor{lightgray}}l |c|c|c|c|}
\cline{2-4}
\rowcolor{lightgray}
\multicolumn{1}{c|}{\cellcolor{white}}& $t=3$ & $t=4$ & $t=5$\\ 
\cline{2-4}
$s=3$ & $ g_M(5;3,3) = 0$ &$g_M(8;3,4) = 2$ &$g_M(13;3,5) = 0$\\
\cline{2-4}
$s=4$ &  & $g_M(17;4,4) = 0$&$g_M(24;4,5) = 12$\\
\cline{2-4}
$s=5$ &  &  &  $g_M(42;5,5) = 6^{(\dagger)}$\\
\cline{2-4}
\multicolumn{1}{c}{\cellcolor{white}}\\
\cline{2-5}
\rowcolor{lightgray}
\multicolumn{1}{c|}{\cellcolor{white}}& $t=6$ & $t=7$ & $t=8$ & $t=9$\\ 
\hline
$s=3$ & $g_M(17;3,6) = 2$ & $g_M(22;3,7) = 6$ & $g_M(27;3,8) = 7$ & $g_M(35;3,9) = 0$ \\
\hline
\end{tabular}
\caption{Values of $g_M(R(s,t)-1;s,t)$.}
\label{table:gm}
\end{center}
\end{table}

We note that some of these results do not require computer aid. For example, $g_M(5,3,3)=0$ can be deduced from the fact that $C_5$ is self-complementary and the unique triangle-free graph on $5$ vertices with no independent set of size $3$. Similarly, the Paley graph on $17$ vertices is self-complementary and the unique $K_4$-free graph on $17$ vertices with no independent set of size $4$, which directly implies that $g_M(17,4,4)=0$. Some other results have simple proofs using pigeonhole arguments.

We also obtained the value of $g(n;s,t)$ for the same values of~$n$,~$s$ and~$t$, that is, the parameter studied extensively in this article, where purple edges do not necessarily form a matching. In most cases we obtain $g(n;s,t)=g_M(n;s,t)$, with the following exceptions: 
\begin{align*}
    g(27;3,8) = 9,\qquad 
    g(24;4,5) = 14,\qquad
    g(42;5,5) = 7.
\end{align*}

We mentioned in the introduction that a perfect matching of purple edges in $K_{24}$ is not sufficient to find a red/purple copy of $K_4$ or a blue/purple copy of $K_5$. Therefore, the fact that $g(24;4,5)>g_M(24;4,5)$ is not too surprising. 

The other two cases are somewhat more surprising. For $(s,t)=(3,8)$, for example, we have $R(3,8)= 28$, and we determined above that $g_M(27;3,8) = 7$. However, there is a graph $P$ with nine edges and a $(K_3,K_8)$-free red/blue/purple colouring \RBP{} of $K_{27}$. Here, we may take $P$ to be the graph consisting of vertex-disjoint copies of the 3-edge path, the 2-edge path and four pairwise disjoint edges, for example. 
A similar behaviour is observed for $(s,t)=(5,5)$ and $n=42$, assuming that $\mathcal{L}(42;5,5)$ is complete, as mentioned above. 
Here, $g_M(42;5,5) = 6$, while there is a $(K_5,K_5)$-free red/blue/purple colouring \RBP{} of $K_{42}$, where $P$ consists of a 3-edge path and four vertex-disjoint edges, that is $P$ has~$7$ edges.
These two cases exemplify that a purple matching is not necessarily optimal if we wish to maximise $|P|$ while maintaining $\omega(R\cup P)<s$ and $\alpha(R) <t$.

%%%%%%%%%%%%%%%%%%%%%%%%%%%%%%%
% conclusion
%%%%%%%%%%%%%%%%%%%%%%%%%%%%%%%
\section{Concluding remarks}\label{sec:Conclusion}

In this article, we initiate the systematic study of double-coloured, or purple edges, in Ramsey theory. To the best of our knowledge, this is novel. 
Naturally, there are many open problems in this context. 

\medskip

\noindent
{\bf Ramsey-Tur\'an numbers.}\\ 
The focus of this paper is the extremal parameter $g(n;s,t)$, and we have seen how this is tightly connected to the Ramsey-Tur\'an number $\RT{s}{t}{n}$.

The numbers $\RT{3}{cn}{n}$ are known asymptotically for all $c\in(0,1]\sm (1/3,4/11)$, and we expect the gap range $(1/3,4/11)$ to be covered as well soon, following the  exciting announcement of resolving \cref{conj:Andrasfai}. The outcome of the triangle-free process, see \cref{thm:OutputTriangleFreeProcess}, together with a blow-up construction similar to the one in the proof of \cref{thm:sublinear_triangle} imply that 
$$(1/4-o(1))tn\le \RT{3}{t}{n}\le tn/2$$ 
whenever $t\ge (\sqrt{2}+o(1)) \sqrt{n\log n}$. 
For $s\ge 4$, the asymptotic values of $\RT{s}{cn}{n}$ are only known for~$c$ small enough or for $c\ge 1/(s-1).$ In the light of the connection to purple edges in Ramsey theory, we would be interested in any progress of estimating
$\lim_{n\to\infty} \RT{s}{cn}{n}/n^2$ for all $0<c<1/(s-1)$. 
A lower bound of $1/4+2c/3$ on this limit is given in~\cite{fox2015critical} for $s=4$ and when $c\le 1/3$. 

While most of the classical results in Ramsey-Tur\'{a}n theory focus on $t$ being linear in $n$, not as much is known in the sublinear case. Many results have been obtained on threshold phenomena, that is, functions $f(n)$ for which $\lim_{n\to \infty}\RT{s}{t}{n}/n^2$ differs for~$t$ above and below~$f$, see for example, Kim, Kim, and Liu~\cite{kim2019two} for $s=8$ and Balogh, Chen, McCourt, and Murley~\cite{balogh2024ramsey} for~$s$ up to~$13$. For these small cases, there exists a pattern showing $\RT{s}{t}{n}=o(n^2)$ for $t\ll R^{-1}((s+1)/2,n)$, which we know to be true for general $s$ assuming~\cref{conj:OnRamsey} following~\cref{theorem:BHSphase}. We would be interested in the behaviour of $\RT{s}{t}{n}$ below these thresholds. We note that Sudakov~\cite{Sudakov2003} showed that $\RT{4}{n^{1-1/r}}{n}< n^{2-1/r(r+1)}$ for any $r\geq 2$, and also asked the general question of determining $\RT{4}{f(n)}{n}$ for any function $f(n)$.

\begin{question}\label{question:subquadratic}
    What is the order of magnitude of $\RT{s}{t}{n}$ for $s\geq 4$ when $t$ is such that $\RT{s}{t}{n}=o(n^2)$?
\end{question}

\medskip

\noindent
{\bf At the far-end of the spectrum.}\\
When $t$ is very close to $R^{-1}(s,n)$, say up to an additive constant, we expect this question to be rather difficult. At that end of the spectrum, estimating $\RT{s}{t}{n}$ becomes a question of estimating the maximum number of red edges in a nearly optimal $(s,t)$-free colouring of $K_n$. Given the complexity of estimating Ramsey numbers, this question, although a tantalising open problem, might be out of reach of current methodologies. However, understanding the difference in behaviours between $\RT{s}{t}{n}$ and $g(n;s,t)$ for such values of $t$ might not be. We have seen in~\cref{thm:TriangleCase} that $g(n;3,t)=\Omega\big(\RT{3}{t}{n}\big)$ for $t\ge (1+\eps)\sqrt{2n\log n}$. We do not expect such a result to hold when $t$ is very close to $R^{-1}(s,n)$, therefore, determining the order of magnitude of $g(n;3,t)$ when $t=R^{-1}(s,n)+1$ would be already interesting. In this regime, the results presented in~\cref{sec:comp} show that for all $t\leq 9$, a perfect matching of purple edges is sufficient to recover the Ramsey property in~$K_n$. We wonder whether this holds for general~$t$.
\begin{question}\label{conj:PM}
    Is it true that for all integers $t\geq 3$ and $n=R(3,t)-1$, every red/blue/purple colouring \RBP{} of $K_n$, where $P$ is a perfect matching, contains a red/purple triangle or a blue/purple copy of $K_t$?
\end{question}

As mentioned in the introduction and in \cref{sec:comp}, we cannot expect the answer to \cref{conj:PM} to be yes when 3 is replaced by~$s\geq 4$. We also mentioned at the end of \cref{sec:comp} that purple matchings do not necessarily have the maximal number of purple edges among all $(s,t)$-free red/blue/purple colouring of $K_n$, even when $s=3$. In that case, we would like to understand what happens when $P$ contains some other, fixed structure. Natural candidates are Hamilton cycles.
\begin{question}\label{question:Hamilton}
Is it true that for all integers $s,t\geq 3$ and $n=R(s,t)-1$, every red/blue/purple colouring \RBP{} of $K_n$, where $P$ is a Hamilton cycle, contains a red/purple copy of $K_s$ or a blue/purple copy of $K_t$?
\end{question}

\medskip

\noindent
{\bf The symmetric case when $s=t$.} \\ 
Any of these considerations on $g(n;s,t)$ can be studied in the symmetric setting, when $s=t$. Write $g(n;s)$ for $g(n;s,s)$.
Observe that $n<R(t,t)$ implies that $t>(\log n)/2$. The results of Nagy~\cite{nagy2016density} imply that $g(n; c\log n)\geq(1-\varepsilon_1)\binom{n}{2}$ for every $c>2$,  and that  $g(n; c\log n)\leq (1-\varepsilon_2)\binom{n}{2}$ for every~$c> 3/5$, where~$\varepsilon_1\approx 2/c$ and~$\varepsilon_2\approx 1/(5c)$. It would be interesting to improve these bounds, and extend them to determine $g(n;s)$ when $s=c\log n$, for any~$c>1/2$.

\paragraph{Acknowledgement}
We are grateful to David Angell for the inspiring problem at the source of this article, to Rob Morris for useful discussions on the triangle free process, to Michael Anastos, Shagnik Das, Patrick Morris and Silas Rathke for valuable feedback and remarks provided during the 2024 Workshop of the Combinatorics and Graph Theory Research Group at FU Berlin, and to Zolt\'{a}n L\'{o}r\'{a}nt Nagy for bringing~\cite{nagy2016density} to our attention.

{
\fontsize{10pt}{11pt}
\selectfont
\let\oldthebibliography=\thebibliography
\let\endoldthebibliography=\endthebibliography
\renewenvironment{thebibliography}[1]{%
\begin{oldthebibliography}{#1}%
\setlength{\parskip}{0.2ex}%
\setlength{\itemsep}{0.2ex}%
}{\end{oldthebibliography}}
\bibliographystyle{plainnat}
\bibliography{bibliography.bib}
}

\appendix

\section{Pseudocodes}\label{sec:PseudoCode}

We now present the pseudocodes used to compute results presented in~\cref{sec:comp}. A {\em Ramsey$(s,t)$-graph} is a graph with no clique of size $s$, and no independent set of size $t$. Algorithm~\ref{algorithm:1} below takes as input a Ramsey$(s,t)$-graph $G$, and outputs the largest~$k$ such that a matching of size~$k$ in $G$ can be deleted without creating an independent set of size $t$. Algorithm~\ref{algorithm:2} determines $g_M(n;s,t)$ by looping through all Ramsey$(s,t)$-graphs, using several optimisations to cut down on runtime. \medskip

\begin{algorithm}[H]
\caption{Given a Ramsey$(s,t)$-graph $G$, returns the largest~$k$ such that $\alpha(G - M) < t$ for some $k$-matching $M\subseteq E(G)$.}\label{algorithm:1}
\begin{algorithmic}[1]
\Procedure{Find\_Swapable}{\texttt{Red\_graph},$s$,$t$}
\State \texttt{List\_of\_swapable\_edges} = list()
\For{$e \in E$(\texttt{Red\_graph})}
\If {the size of the largest clique in (\texttt{Blue\_graph}$\ \cup \ \{e\}) < t$}
\State \texttt{List\_of\_swapable\_edges}.append($e$)
\EndIf
\EndFor
\State \textbf{return } \texttt{List\_of\_swapable\_edges}
\EndProcedure\\

\Procedure{Find\_k}{\texttt{Red\_graph},$s$,$t$}
\If {\Call{Find\_swapable}{\texttt{Red\_graph},s,t} $= \varnothing$ }
\State{\textbf{return} 0}
\EndIf
\State $k$ = 1
\While {$k \leq \lfloor|V(\texttt{Red\_graph})|/2\rfloor$}
\State \texttt{Is\_k\_Ramsey} = \texttt{True}
\State \texttt{matchings} = matchings of size $k$ in \Call{Find\_Swapable}{\texttt{Red\_graph},s,t}
\For {$M$ in \texttt{matchings}}
\If {the size of the largest clique in (\texttt{Blue\_graph}$\ \cup \ M) < t$}
\State \texttt{Is\_k\_Ramsey} = \texttt{False}
\State \textbf{break}
\EndIf
\EndFor
\If {\texttt{Is\_k\_Ramsey} is \texttt{True}}
\State \textbf{return} $k-1$
\EndIf
\State $k$ += 1
\EndWhile
\State \textbf{return } $\lfloor|V(\texttt{Red\_graph})|/2\rfloor$
\EndProcedure
\end{algorithmic}
\end{algorithm}

\begin{algorithm}[H]
\caption{Determine $g_M(n;s,t)$}\label{algorithm:2}
\begin{algorithmic}[1]
\Procedure{max\_k\_over\_all\_critical}{\texttt{All\_critical\_graphs},$s$,$t$,$n$}
\State \texttt{Min\_size = $n^2$}; \texttt{Max\_size} = 0
%\State
\For{$G$ in \texttt{All\_critical\_graphs}}
\If{$|E(G)| <$ \texttt{Min\_size}}
\State \texttt{Min\_size} = $|E(G)|$
\EndIf
\If{$|E(G)| >$ \texttt{Max\_size}}
\State \texttt{Max\_size} = $|E(G)|$
\EndIf
\EndFor
\State \texttt{Edge\_bound} = \texttt{Max\_size} - \texttt{Min\_size} + 1
\State \texttt{Max\_k} = 0
\For {$G$ in \texttt{All\_critical\_graphs}}
\If {$|E(G)|$ - \texttt{Max\_k} $<$ \texttt{Min\_size}}
\State \textbf{continue} (to next graph)
\EndIf
\State $k$ = \Call{Find\_k}{$G$,$s$,$t$}
\If {$k = \lfloor n/2 \rfloor$}
\State \textbf{return} $\lfloor n/2 \rfloor$
\EndIf
\If {$k > $ \texttt{Max\_k}}
\State \texttt{Max\_k} = $k$
\EndIf
\If{\texttt{Max\_k} = \texttt{Edge\_bound}} - 1
\State \textbf{break}
\EndIf
\EndFor
\State \textbf{return} \texttt{Max\_k} 
\EndProcedure
\end{algorithmic}
\end{algorithm}

\end{document}